\newcommand{\pd}[2]{\frac{\partial #1}{\partial #2}}
\newcommand{\es}[2]{\left( #1, #2\right)}
\newtheorem{theorem}{Theorem}
\newtheorem{lemma}[theorem]{Lemma}
\newtheorem{definition}[theorem]{Definition}
\title{Finite element schemes for a class of
nonlocal parabolic systems with moving boundaries}
\author{Rui M. P. Almeida\thanks{University of Beira Interior, Department of Mathematics, Faculty of Science, email: ralmeida@ubi.pt, jduque@ubi.pt, rrobalo@ubi.pt}   \and Jos\'e C. M. Duque$^*$  \and Jorge Ferreira\thanks{Federal University Fluminense - UFF - VCE; email: ferreirajorge2012@gmail.com} \and Rui J. Robalo$^*$}
\date{\today}
\begin{document}

\maketitle

\begin{abstract}
The aim of this paper is to establish convergence, properties and error bounds for
the fully discrete solutions of a class of nonlinear systems of
reaction-diffusion nonlocal type with moving boundaries, using the finite element method with polynomial approximations
of any degree. A coordinate transformation which fixes the boundaries is used. Some
numerical tests to compare our Matlab code with a moving finite
element method are investigated.\\

\textbf{Mathematics subject classification}: 35K55, 65M15, 65M60

\textbf{keywords}: nonlinear parabolic system; nonlocal diffusion term;
reaction-diffusion;  convergence;  numerical  simulation;  Euler; Crank-Nicolson;
finite element method.
\end{abstract}

\section{Introduction}
\label{intro}
In this work, we study parabolic systems with nonlocal nonlinearity of the
following type:
\begin{equation}  \label{probi}
\left\{
\begin{array}{l}
\displaystyle \frac{\partial u_i}{\partial t}-a_i\left( \int_{\Omega
_{t}}u_1(x,t)dx,\dots,\int_{\Omega _{t}}u_{{n_e}}(x,t)dx\right) \frac{%
\partial^2u_i}{\partial x^2}=f_i\left(x,t\right)\,, \quad (x,t)\in Q_{t} \\
\displaystyle u_i\left( \alpha (t),t\right) =u_i\left( \beta
(t),t\right)=0\,,\quad t>0 \\
\displaystyle u_i(x,0)=u_{i0}(x)\,, \quad x\in \Omega _{0}=]\alpha (0),\beta
(0)[,\quad i=1,\dots,{n_e} \\
\end{array}
\right. \,
\end{equation}
where $Q_{t}$ is a bounded non-cylindrical domain defined by
$$Q_{t}=\left\{ (x,t)\in \mathbb{R}^{2}:\:\alpha(t)<x<\beta(t),\:\: \text{for
all } 0<t<T\right\}\,$$
and
$$\Omega_t=\{x\in\mathbb{R}:\: \alpha(t)<x<\beta(t), 0\leq t\leq T\}.$$
Problem (\ref{probi}) arises in a large class of real models,
for example, in biology, where the solution $u$ could describe the density
of a population subject to spreading; or in physics, where $u$ could represent
the temperature, considering that the measurements are an average in a
neighbourhood \cite{chi00}. It is nonlocal in the sense that the diffusion
coefficient is determined by a global quantity, that is, $a$ depends on the
whole population in the area.

This class of problems, with nonlocal coefficients in an open bounded
cylindrical domain, was initially studied by Chipot and Lovat  in \cite{CL97},
 where they proved the existence and uniqueness of weak solutions. In
recent years, nonlinear parabolic equations with nonlocal diffusion terms
have been extensively studied \cite
{CL99,AK00,CM01,CC03a,CS03,CVC03,CMF04,ZC05}, especially in relation to
questions of existence, uniqueness and asymptotic behaviour.

In order to model interactions, a system is needed. Raposo et al.
\cite{RSVPS08}, in 2008, studied the existence, uniqueness and exponential
decay of solutions for reaction-diffusion coupled systems of the form
\begin{equation*}
\left\{
\begin{array}{lrl}
u_{t}-a(l(u))\Delta u+f(u-v)=\alpha (u-v) & \text{ in } & \Omega \times
]0,T], \\
v_{t}-a(l(v))\Delta v-f(u-v)=\alpha (v-u) & \text{ in } & \Omega \times
]0,T], \\
\end{array}%
\right.
\end{equation*}%
with $a(\cdot )>0$, $l$ a continuous linear form, $f$ a Lipschitz-continuous
function and $\alpha $ a positive parameter. Recently, Duque et al. \cite%
{DAAFppa} considered nonlinear systems of parabolic equations with a more
general nonlocal diffusion term working on two linear forms $l_{1}$ and $%
l_{2}$:
\begin{equation}
\left\{
\begin{array}{lrl}
u_{t}-a_{1}(l_{1}(u),l_{2}(v))\Delta u+\lambda _{1}|u|^{p-2}u=f_{1}(x,t) &
\text{ in } & \Omega \times ]0,T], \\
v_{t}-a_{2}(l_{1}(u),l_{2}(v))\Delta v+\lambda _{2}|v|^{p-2}v=f_{2}(x,t) &
\text{ in } & \Omega \times ]0,T]. \\
\end{array}%
\right.   \label{sisDAAF}
\end{equation}%
They gave important results on polynomial and exponential decay, vanishing
of the solutions in finite time, and localisation properties such as waiting
time effect.

Moving boundary problems occur in many physical applications involving
diffusion, such as in heat transfer where a phase transition occurs, in
moisture transport such as swelling grains or polymers, and in deformable
porous media problems where the solid displacement is governed by diffusion
(see, for example, \cite{fer97,fer99,fer05,bri07,BG04}). Cavalcanti et al \cite%
{fer03}  worked with a time-dependent function $a=a\left( t,\int_{\Omega
_{t}}\left\vert \nabla u(x,t)\right\vert ^{2}dx\right) $ to establish the
solvability and exponential energy decay of the solution for a model given
by a hyperbolic-parabolic equation in an open bounded subset of $\mathbb{R}%
^{n}$, with moving boundary. Santos et al. \cite{san05} established the
exponential energy decay of the solutions for nonlinear coupled systems of
beam equations with memory in noncylindrical domains. Recently, Robalo et
al. \cite{RACF14} proved the existence and uniqueness of weak and strong
global in time solutions and gave conditions, on the data, for these
solutions to have the exponential decay property. The analysis and numerical
simulation of such problems presents further challenges. In \cite{AK00},
Ackleh and Ke propose a finite difference scheme to approximate the
solutions and to study their long time behaviour. The authors also made
numerical simulations,  using an implicit finite difference scheme in one
dimension \cite{RSVPS08} and the finite volume discretisation  in two space
dimensions \cite{EGHM02}. Bendahmane and Sepulveda \cite{BS09}, in 2009,
investigated the propagation of an epidemic disease modelled by a system of
three PDE, where the $i$th equation is of the type
\begin{equation*}
(u_{i})_{t}-a_{i}\left( \int_{\Omega }u_{i}dx\right) \Delta
u_{i}=f_{i}\left( u_{1},u_{2},u_{3}\right) ,
\end{equation*}%
in a physical domain $\Omega \subset \mathbb{R}^{n}$ $(n=1,2,3)$.
They established the existence of solutions for finite volume schemes and their
convergence to the weak solution of the PDE. In \cite{DAAF15}, the authors
proved the optimal order of convergence for a linearised Euler-Galerkin
finite element method for  problem (\ref{sisDAAF}) and presented some
numerical results. Almeida et al., in \cite{ADFR14}, established
convergence, properties and error bounds for the fully discrete solutions of a class of
nonlinear equations of reaction-diffusion nonlocal type with moving
boundaries, using a linearised Crank-Nicolson-Galerkin finite element method
with polynomial approximations of arbitrary degree. In \cite{RACF14}, Robalo et
al. also obtained approximate numerical solutions for equations of this
type with a Matlab code based on the Moving Finite Element Method (MFEM)
with high degree local approximations.

In this paper, we study the convergence of the total discrete solutions using the finite element method with some classical time integrators. To the best of our knowledge, these results are new for nonlocal reaction-diffusion systems with moving boundaries.

The paper is organized as follows. In Section 2, we formulate the problem and the hypotheses on the data. In Section 3, we define and prove the convergence of the semidiscrete solution. Section 4 is devoted to the proof of the existence, uniqueness, stability and convergence of the fully discrete solutions for each method. In Section 5, we obtain and compare the approximate numerical solutions for one example. Finally, in Section 6, we draw some conclusions.

\section{Statement of the problem}

In what follows, we study the convergence of the totally discrete solutions of the
one-dimensional Dirichlet problem with two moving boundaries, defined by
\begin{equation}  \label{prob0}
\left\{
\begin{array}{l}
\displaystyle \frac{\partial u_i}{\partial t}-a_i\left( \int_{\Omega
_{t}}u_1(x,t)dx,\dots,\int_{\Omega _{t}}u_{{n_e}}(x,t)dx\right) \frac{%
\partial^2u_i}{\partial x^2}=f_i\left(x,t\right), \,(x,t)\in Q_{t} \\
\displaystyle u_i\left( \alpha (t),t\right) =u_i\left( \beta
(t),t\right)=0\,,\quad t>0 \\
\displaystyle u_i(x,0)=u_{i0}(x)\,, \quad x\in \Omega _{0}=]\alpha (0),\beta
(0)[,\quad i=1,\dots,{n_e} \\
\end{array}
\right. \,
\end{equation}
where
\begin{equation*}
Q_{t}=\left\{ (x,t)\in\mathbb{R}^{2}:\:\alpha(t)<x<\beta(t),\:\: \text{for
all } 0<t<T\right\}\,
\end{equation*}
is a bounded non-cylindrical domain, $T$ is an arbitrary positive real
number, $a_i$ denotes a positive real function and
$$\Omega_t=\{x\in\mathbb{R}:\: \alpha(t)<x<\beta(t), 0\leq t\leq T\}.$$
 The lateral boundary of $%
Q_ {t} $ is given by $\Sigma_{t}=\bigcup_{0\leq t<T} \left(
\left\{\alpha(t),\beta(t)\right\}\times \{t\}\right)$.

In \cite{RACF14}, the authors established the existence, uniqueness and
asymptotic behaviour of strong regular solutions for these type of problems using a
coordinate transformation, which fixes the boundaries, and assuming that the real
function $\gamma(t)=\beta(t)-\alpha(t)$ is increasing on $0\leq t<T$. They used the fact
that, when $(x,t)$ varies in $Q_{t}$, the point $(y,t)$ of $\mathbb{R}^{2}$%
, with $y=(x-\alpha(t))/\gamma(t)$, varies in the cylinder $Q=]0,1[
\times]0,T[$. Thus, the function $\tau :Q_{t} \longrightarrow Q$ given by $%
\tau(x,t)=(y,t)$, is of class $\mathcal{C}^{2}$. The inverse $\tau^{-1}$ is
also of class $\mathcal{C}^{2}$. The change of variable $v(y,t)=u(x,t)$ and $%
g(y,t)=f(x,t)$ with $x=\alpha(t)+\gamma(t)\,y$ transforms problem (\ref%
{prob0}) into the following problem:
\begin{equation}  \label{prob1}
\left\{
\begin{array}{l}
\displaystyle \frac{\partial v_i}{\partial t}-a_i\left(
l(v_1),\dots,l(v_{{n_e}})\right)b_2(t) \frac{\partial^2v_i}{\partial y^2}%
-b_1(y,t)\frac{\partial v_i}{\partial y}=g_i\left(y,t\right)\,, \quad
(y,t)\in Q \\
\displaystyle v_i\left(0,t\right) =v_i\left(1,t\right)=0\,,\quad t>0 \\
\displaystyle v_i(y,0)=v_{i0}(y)\,, \quad y\in \Omega=]0,1[,\quad
i=1,\dots,{n_e} \\
\end{array}
\right. \,
\end{equation}
where $l(v)=\gamma(t)\int_0^1 v(y,t)\ dy$, $g_i(y,t)=f_i(\alpha+\gamma\,y,t)$
and $v_{i0}(y)=u_{i0}(\alpha(0)+\gamma(0)\,y)$. The coefficients $b_{1}(y,t)$
and $b_{2}(t)$ are defined by
\begin{equation*}
b_{1}(y,t)=\frac{\alpha^{\prime }(t)+\gamma^{\prime }(t)y}{\gamma(t)}\quad
\text{and} \quad b_{2}(t)=\frac{1}{\left(\gamma(t)\right)^{2}} \,.
\end{equation*}
With this change of variable, we transfer the problem of the boundary's movement to the first order advection term. If the speed of the boundary grows fast with time, $b_1$ can dominate in magnitude the diffusion coefficient, which can result in numerical instability. Thus, some conditions must be imposed on the mesh size and on the time step. We will address this issue later.\\
Since we need the existence and uniqueness of a strong solution in $Q_{t}$,
we will assume that the hypotheses in \cite{RACF14} are satisfied, namely:
\begin{equation*}
\begin{array}{l}
(H1)\qquad \alpha ,\,\beta \in \mathcal{C}^{2}\left( [0,T]\right) \text{ and
 }0<\gamma _{0}<\gamma (t)<\gamma _{1}<\infty \,,\;\text{for all }t\in
\lbrack 0,T] \\
(H2)\qquad \alpha^{\prime },\,\beta^{\prime}\in L_{2}\left(]0,T[\right) \\
(H3)\qquad u_{i0}\in H_{0}^{1}\left( \Omega _{0}\right) \,,\quad \Omega
_{0}=]\alpha (0),\beta (0)[,\quad i=1,\dots ,{n_e}, \\
(H4) \qquad \int_0^T\int_{\Omega_t} f^2_i\ dxdt< \infty\,, \quad
\Omega_{t}=]\alpha(t),\beta(t)[,\,i=1,\dots ,{n_e}, \\
(H5)\qquad a_{i}:\mathbb{R}^{{n_e}}\longrightarrow \mathbb{R}^{+}\text{
is Lipschitz-continuous } \\
\hspace{1.9cm}\text{with }0<m_{a}\leq a_{i}(s)\leq M_{a}\,,\;\;\text{for all
}s\in \mathbb{R},\quad i=1,\dots ,{n_e}. \\
\end{array}%
\end{equation*}%
We also need to assume that
$$|\gamma'(t)|\leq \gamma'_{\max}\, \text{and}\, |\alpha'(t)|\leq \alpha'_{\max}.$$
Let $\Omega =]0,1[$. The definition of a weak solution is as follows.
\begin{definition}[Weak solution]
\label{fraca} We say that the function $\mathbf{v}=(v_{1},\dots ,v_{{n_e}})$ is
a weak solution of problem (\ref{prob1}) if, for each $i\in \{1,\dots ,{n_e}\}$,
\begin{equation}
v_{i}\in L_{\infty }(0,T;H_{0}^{1}(\Omega )\cap H^{2}(\Omega )),\frac{%
\partial v_{i}}{\partial t}\in L_{2}(0,T;L_{2}(\Omega )),  \label{regx}
\end{equation}
the following equality is valid for all $w_{i}\in H_{0}^{1}(\Omega ),$ and $t\in ]0,T[$,
\begin{equation}
\int_{0}^{1}\frac{\partial v_{i}}{\partial t}w_{i}dy+a_{i}(l(v_{1}),\dots
,l(v_{{n_e}}))b_{2}\int_{0}^{1}\frac{\partial v_{i}}{\partial y}\frac{\partial
w_{i}}{\partial y}dy-\int_{0}^{1}b_1\frac{\partial v_{i}}{\partial y}w_{i}dy=\int_{0}^{1}g_{i}w_{i}dy  \label{fracav}
\end{equation}
 and
\begin{equation}
v_{i}(x,0)=v_{i0}(x),\quad x\in \Omega   \label{condiuv}
\end{equation}
\end{definition}

\section{Semidiscrete solution}
We denote the usual $L_{2}$ norm and inner product in $\Omega $ by $\Vert .\Vert $ and  $(.,.)$ respectively, and the
norm in $H^{k}(\Omega )$ by $\Vert .\Vert _{H^{k}}$.
Let $\mathcal{T}_{h}$ denote a partition of $\Omega $ into disjoint
intervals $T_{i}$, $i=1,\dots ,n_t$, such that $h=\max \{diam(T_{i}),i=1,\dots
,n_t\}$. Now, let $S_{h}^{k}$ denote the continuous functions on the closure $%
\bar{\Omega}$ of $\Omega $ which are polynomials of degree $k$ in each
interval of $\mathcal{T}_{h}$ and which vanish on $\partial \Omega $, that
is,
\begin{equation*}
S_{h}^{k}=\{W\in C_{0}^{0}(\bar{\Omega})|{W}_{|{T_{i}}}\text{ is a
polynomial of degree $k$ for all }T_{i}\in \mathcal{T}_{h}\}.
\end{equation*}%
If $\{\varphi _{j}\}_{j=1}^{n_p}$ is the Lagrange basis for $S_{h}^{k}$ associated to the points $\{P_{j}\}_{j=1}^{n_p}$, then we can
represent each $W\in S_{h}^{k}$ as
\begin{equation*}
W=\sum_{j=1}^{n_p}W(P_{j})\varphi _{j}.
\end{equation*}%
Given a smooth function $u$ on $\Omega,$ which vanishes on $\partial \Omega
$, we may define its interpolant, denoted by $I_{h}u$, as the function of $%
S_{h}^{k}$ which coincides with $u$ at the points $\{P_{j}\}_{j=1}^{n_p}$,
that is,
\begin{equation*}
I_{h}u=\sum_{j=1}^{n_p}u(P_{j})\varphi _{j}.
\end{equation*}

\begin{lemma}[\protect\cite{Tho06}]
\label{errint} If $u\in H^{k+1}(\Omega)\cap H_0^1(\Omega)$, then
\begin{equation*}
\|I_h u-u\|+h\|\nabla(I_h u-u)\|\leq Ch^{k+1}\|u\|_{H^{k+1}}.
\end{equation*}
\end{lemma}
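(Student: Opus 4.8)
The plan is to prove the estimate locally, on each interval of the partition, and then assemble. Since both $\|\cdot\|$ and $\|\nabla\cdot\|$ over $\Omega$ decompose as sums of the corresponding quantities over the intervals $T_i\in\mathcal{T}_h$, and since $I_hu$ restricted to $T_i$ is exactly the degree-$k$ Lagrange interpolant of $u$ on $T_i$, it suffices to establish, for each $i$,
\[
\|I_hu-u\|_{L_2(T_i)}+h_i\|\nabla(I_hu-u)\|_{L_2(T_i)}\leq C\,h_i^{k+1}|u|_{H^{k+1}(T_i)},
\]
where $h_i=\mathrm{diam}(T_i)\leq h$ and $|\cdot|_{H^{k+1}}$ is the top-order seminorm. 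Squaring, summing over $i$, using $h_i\leq h$ and $\sum_i|u|_{H^{k+1}(T_i)}^2=|u|_{H^{k+1}(\Omega)}^2\leq\|u\|_{H^{k+1}}^2$ then yields the global bound.

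First I would pass to a fixed reference interval $\hat T=[0,1]$ via the affine map $F_i:\hat T\to T_i$, $F_i(\hat y)=x_i^L+h_i\hat y$, with $x_i^L$ the left endpoint of $T_i$. Writing $\hat u=u\circ F_i$, the nodal interpolation operator commutes with this change of variables, so $(I_hu)\circ F_i=\hat I\hat u$, where $\hat I$ is degree-$k$ Lagrange interpolation on $\hat T$. The change of variables in the integrals gives the scaling relations $\|w\|_{L_2(T_i)}=h_i^{1/2}\|\hat w\|_{L_2(\hat T)}$ and, for the $m$-th order seminorm, $|w|_{H^m(T_i)}=h_i^{1/2-m}|\hat w|_{H^m(\hat T)}$.

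On the reference interval the key observation is that $\hat I$ reproduces polynomials of degree at most $k$ exactly, so the linear operator $\hat u\mapsto\hat u-\hat I\hat u$ annihilates $\mathcal{P}_k(\hat T)$ and is bounded from $H^{k+1}(\hat T)$ into $H^m(\hat T)$ for $m=0,1$; boundedness of $\hat I$ here follows from the one-dimensional Sobolev embedding $H^{k+1}(\hat T)\hookrightarrow C(\hat T)$, which makes the point evaluations at the nodes continuous. The Bramble--Hilbert lemma then furnishes a constant $C$, depending only on $k$ and $\hat T$, with
\[
\|\hat u-\hat I\hat u\|_{H^m(\hat T)}\leq C\,|\hat u|_{H^{k+1}(\hat T)},\qquad m=0,1.
\]

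Finally I would undo the scaling. Using $|\hat u|_{H^{k+1}(\hat T)}=h_i^{\,k+1/2}|u|_{H^{k+1}(T_i)}$ together with the relations above for $m=0$ and $m=1$ applied to $w=I_hu-u$, the reference estimate becomes exactly the local bound on $T_i$ (the factor $h_i^{1/2}$ from $m=0$ and $h_i^{-1/2}$ from $m=1$ combine with $h_i^{k+1/2}$ to give $h_i^{k+1}$ and $h_i^{k}$ respectively). Squaring, summing over $i$, and bounding each $h_i$ by $h$ completes the argument. The main obstacle is the reference-element estimate itself, and in particular justifying that the nodal interpolant is a bounded operator on $H^{k+1}(\hat T)$ so that Bramble--Hilbert applies; the rest is the bookkeeping of affine scaling. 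In one space dimension one could bypass Bramble--Hilbert by representing the interpolation error through the Peano kernel, which produces the same powers of $h_i$ directly, but the Bramble--Hilbert route is the one that matches the reference \cite{Tho06}.
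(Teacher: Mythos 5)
Your proof is correct, and it is essentially the canonical argument: the paper itself does not prove this lemma at all, but simply quotes it from \cite{Tho06}, and the proof given there (element-by-element decomposition, affine scaling to a reference interval, Bramble--Hilbert via the Sobolev embedding $H^{k+1}(\hat T)\hookrightarrow C(\hat T)$, and scaling back) is exactly the route you describe, with all exponents handled correctly. No gaps to report.
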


\begin{definition}[\protect\cite{Tho06}]
A function $\tilde u\in S_h^k$ is said to be the Ritz projection of $u\in
H_0^1(\Omega)$ onto $S_h^k$ if it satisfies
\begin{equation*}
\es{\nabla \tilde u}{\nabla W}=\es{\nabla
u}{\nabla W},\quad \text{ for all } W\in S_h^k.
\end{equation*}
\end{definition}

\begin{lemma}[\protect\cite{Tho06}]
\label{errproj} If $u\in H^{k+1}(\Omega )\cap H_{0}^{1}(\Omega )$, then
\begin{equation*}
\Vert \tilde{u}-u\Vert +h\Vert \nabla (\tilde{u}-u)\Vert \leq Ch^{k+1}\Vert
u\Vert _{H^{k+1}},
\end{equation*}%
where $C$ does not depend on $h$ nor on $k$.
\end{lemma}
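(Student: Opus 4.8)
The plan is to establish the two pieces of the estimate separately: the gradient (energy-norm) bound directly from the defining Galerkin orthogonality together with the interpolation estimate of Lemma \ref{errint}, and the $L_2$ bound through an Aubin--Nitsche duality argument. Throughout I write $e=\tilde{u}-u$, and the starting point is the orthogonality relation $\es{\nabla e}{\nabla W}=0$ for every $W\in S_h^k$, which follows at once by subtracting the two identities in the definition of the Ritz projection.

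For the gradient term, the key observation is that the Ritz projection is the best approximation of $u$ in the seminorm $\Vert\nabla\cdot\Vert$. Indeed, for any $W\in S_h^k$ we have $\tilde{u}-W\in S_h^k$, so decomposing $\tilde{u}-u=(\tilde{u}-W)+(W-u)$ and using orthogonality on the first summand gives $\Vert\nabla e\Vert^2=\es{\nabla e}{\nabla(W-u)}$. Cauchy--Schwarz then yields $\Vert\nabla e\Vert\leq\Vert\nabla(W-u)\Vert$ for every $W\in S_h^k$. Choosing $W=I_h u$ and invoking Lemma \ref{errint} produces $\Vert\nabla e\Vert\leq\Vert\nabla(I_h u-u)\Vert\leq Ch^{k}\Vert u\Vert_{H^{k+1}}$, and therefore $h\Vert\nabla e\Vert\leq Ch^{k+1}\Vert u\Vert_{H^{k+1}}$, which is the second term of the claim.

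For the $L_2$ term I would introduce the dual problem of finding $\phi\in H^2(\Omega)\cap H_0^1(\Omega)$ with $-\phi''=e$ on $\Omega$ and $\phi(0)=\phi(1)=0$, whose solution obeys the elliptic regularity bound $\Vert\phi\Vert_{H^2}\leq C\Vert e\Vert$. Integrating by parts, and using that $e\in H_0^1(\Omega)$ so that the boundary terms vanish, gives $\Vert e\Vert^2=\es{\nabla e}{\nabla\phi}$. Subtracting $I_h\phi$ by orthogonality and applying Cauchy--Schwarz together with Lemma \ref{errint} for $\phi$ (the case of interpolation onto an $H^2$ function), I obtain $\Vert e\Vert^2=\es{\nabla e}{\nabla(\phi-I_h\phi)}\leq\Vert\nabla e\Vert\,\Vert\nabla(\phi-I_h\phi)\Vert\leq C h\Vert\nabla e\Vert\,\Vert\phi\Vert_{H^2}\leq C h\Vert\nabla e\Vert\,\Vert e\Vert$. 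Combining this with the gradient bound of the previous paragraph and cancelling one factor of $\Vert e\Vert$ gives $\Vert e\Vert\leq Ch^{k+1}\Vert u\Vert_{H^{k+1}}$; adding the two estimates finishes the proof.

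The only genuinely nontrivial ingredient is the duality step, which rests on the $H^2$ elliptic regularity of the dual problem. On the interval $\Omega=]0,1[$ this is elementary, since $\phi$ is recovered by integrating $e$ twice and the regularity constant is explicit, so no obstacle arises there beyond routine bookkeeping. Finally, the assertion that $C$ does not depend on $k$ simply propagates from the corresponding property already recorded in Lemma \ref{errint}, as every constant above is either a fixed interpolation constant or the $k$-independent elliptic regularity constant.
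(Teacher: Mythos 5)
The paper gives no proof of this lemma at all: it is quoted directly from Thom\'ee \cite{Tho06}, and your argument --- energy-norm best approximation via Galerkin orthogonality for the gradient bound, plus an Aubin--Nitsche duality step for the $L_2$ bound --- is precisely the standard proof found in that reference, and it is correct. One minor caveat: the bound $\Vert\nabla(\phi-I_h\phi)\Vert\leq Ch\Vert\phi\Vert_{H^2}$ for the dual solution is not literally an application of Lemma \ref{errint} (which assumes $H^{k+1}$ regularity and yields the rate $h^{k+1}$), but rather its intermediate-regularity ($s=1$) variant; this version is standard and holds for Lagrange interpolation of any degree $k\geq 1$, so the step is sound once stated that way.
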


The semidiscrete problem, based on Definition \ref{fraca}, consists in
finding $\mathbf{V}=(V_1,\dots,V_{{n_e}})\in(S_h^k)^{{n_e}}$, for $
t\geq 0$, such that for all $\mathbf{W}=(W_1,\dots,W_{{n_e}})\in (S_h^k)^{{n_e}}$
and $t\in]0,T[$:
\begin{equation}  \label{probsd}
\left\{%
\begin{array}{l}
\displaystyle \es{\frac{\partial V_{i}}{\partial t}}{
W_i}+a_i(l(V_1),\dots,l(V_{{n_e}}))b_{2}\es{\frac{\partial V_i}{
\partial y}}{\frac{\partial W_i}{\partial y}}-\es{b_{1}\frac{
\partial V_i}{\partial y}}{W_i}=\es{g_i}{W_i}\\
V_i(y,0)=I_hv_{i0},\quad i=1,\dots,{n_e} \\
\end{array}%
\right..
\end{equation}
Since the functions $a_i$ are continuous, Caratheodory´s Theorem implies the existence of a solution to system (\ref{probsd}), and arguing as in the proof of Theorem 3 in \cite{RACF14}, we can prove the uniqueness of this solution.
In virtue of condition (H5), the convergence of the semidiscrete solution to the weak solution of problem (\ref{prob1}) can be obtained using standard arguments, and hence we will only present the main steps of the proof and specify the dependence on the regularity of the weak solution.

\begin{theorem}
\label{conv_h} If $\mathbf{v}$ is the solution of problem (\ref{prob1}) and $\mathbf{V}$ is the solution of problem (\ref{probsd}), then
\begin{equation*}
\Vert V_{i}-v_{i}\Vert \leq Ch^{k+1},\quad t\in ]0,T],\quad i=1,\dots ,{n_e}
\end{equation*}%
where $C$ may depend on $\sum_{i=1}^{n_e}\left\Vert
v_{i0}\right\Vert_{H^{k+1}}$, $\sum_{i=1}^{n_e}\|v_i\|_{L_{\infty}(0,T;H^{k+1}(\Omega))}$,\\ $\sum_{i=1}^{n_e}\left\Vert \pd {v_i}{y}\right\Vert_{L_{\infty}(0,T;L_2(\Omega))}$, $\sum_{i=1}^{n_e}\left\Vert v_i\right\Vert
_{L_{2}(0,T;H^{k+1}(\Omega))}$and $\sum_{i=1}^{n_e}\|\pd {v_i}{t}\|_{L_2(0,T;H^{k+1}(\Omega))}$,\\  but does not depend on $h$, $k$ or $i$.
\end{theorem}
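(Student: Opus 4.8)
The plan is to use the standard energy argument based on the Ritz projection, adapted to accommodate the nonlocal coupling and the first-order advection term. Denoting by $\tilde v_i$ the Ritz projection of $v_i$ onto $S_h^k$, I would split the error as
\[
V_i - v_i = (V_i - \tilde v_i) + (\tilde v_i - v_i) =: \theta_i + \rho_i,
\]
so that, by Lemma \ref{errproj}, $\|\rho_i\| \le Ch^{k+1}\|v_i\|_{H^{k+1}}$, and the problem reduces to bounding $\theta_i \in S_h^k$. Subtracting the weak formulation (\ref{fracav}), tested against $W_i \in S_h^k$, from the semidiscrete equation (\ref{probsd}) gives an error equation for $\theta_i$. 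In the diffusion term I would add and subtract $a_i(l(\mathbf{V}))b_2(\nabla v_i, \nabla W_i)$ so as to separate a coercive piece $a_i(l(\mathbf{V}))b_2(\nabla(V_i-v_i),\nabla W_i)$ from a coefficient-difference piece $[a_i(l(\mathbf{V})) - a_i(l(\mathbf{v}))]b_2(\nabla v_i, \nabla W_i)$; the defining property of the Ritz projection then annihilates the $\rho_i$ contribution in the coercive piece, since $(\nabla\rho_i, \nabla W_i)=0$ for all $W_i\in S_h^k$.

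Next I would take $W_i = \theta_i$ and estimate term by term. The time-derivative term yields $\tfrac12\frac{d}{dt}\|\theta_i\|^2$, and the diffusion term is bounded below by $m_a b_2\|\nabla\theta_i\|^2$ using (H1) and (H5) (so that $b_2\ge 1/\gamma_1^2$ and $a_i\ge m_a$). The term $(\partial_t\rho_i, \theta_i)$ is handled by Cauchy--Schwarz together with the commutativity of the Ritz projection with $\partial_t$, which gives $\|\partial_t\rho_i\| \le Ch^{k+1}\|\partial_t v_i\|_{H^{k+1}}$ via Lemma \ref{errproj} applied to $\partial_t v_i$; this is the source of the $\|\partial_t v_i\|_{L_2(0,T;H^{k+1})}$ dependence. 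For the coefficient-difference term I would invoke the Lipschitz hypothesis (H5) with $|l(V_j)-l(v_j)| = \gamma\,\big|\int_0^1(\theta_j+\rho_j)\,dy\big| \le \gamma_1\|\theta_j+\rho_j\|$, producing a bound proportional to $\big(\sum_{j}\|\theta_j+\rho_j\|\big)\|\nabla v_i\|\,\|\nabla\theta_i\|$; after Young's inequality this contributes the factor $\|\nabla v_i\|^2$ (hence the $\|\partial_y v_i\|_{L_\infty(0,T;L_2)}$ dependence) and, crucially, couples the estimate for $\theta_i$ to all the other components.

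The advection term $(b_1\nabla(\theta_i+\rho_i), \theta_i)$ needs more care. Using $|b_1|\le(\alpha'_{\max}+\gamma'_{\max})/\gamma_0$, the $\theta_i$ part is controlled by Young's inequality, absorbing a fraction of $\|\nabla\theta_i\|^2$. For the $\rho_i$ part, a direct bound would only furnish $\|\nabla\rho_i\|\sim h^k$ and spoil the optimal order, so I would integrate by parts, writing $(b_1\nabla\rho_i, \theta_i) = -(\rho_i, \partial_y(b_1\theta_i))$ with $\partial_y(b_1\theta_i)=\tfrac{\gamma'}{\gamma}\theta_i + b_1\nabla\theta_i$ and vanishing boundary term since $\theta_i\in S_h^k$; this replaces $\|\nabla\rho_i\|$ by $\|\rho_i\|\sim h^{k+1}$ and restores the correct power of $h$. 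Summing over $i=1,\dots,n_e$ and absorbing the gradient contributions into the coercive term leads to $\frac{d}{dt}\sum_i\|\theta_i\|^2 \le C\sum_i\|\theta_i\|^2 + C\sum_i(\|\rho_i\|^2 + \|\partial_t\rho_i\|^2)$. Gronwall's inequality, with initial data estimated by $\|\theta_i(0)\|\le\|I_hv_{i0}-v_{i0}\| + \|v_{i0}-\tilde v_{i0}\|\le Ch^{k+1}\|v_{i0}\|_{H^{k+1}}$ (using Lemmas \ref{errint} and \ref{errproj}), then yields $\sum_i\|\theta_i(t)\|^2\le Ch^{2(k+1)}$ with exactly the stated norm dependencies, and a final triangle inequality $\|V_i-v_i\|\le\|\theta_i\|+\|\rho_i\|$ closes the argument.

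The main obstacle is the nonlocal coupling: because each $a_i$ depends on $l(v_1),\dots,l(v_{n_e})$, the error equation for $\theta_i$ involves contributions from every component through the Lipschitz estimate, so the individual bounds cannot be closed equation by equation and must be aggregated into a single Gronwall inequality for $\sum_i\|\theta_i\|^2$. The subtler technical point is the advection--Ritz term, where integration by parts is essential to avoid losing a power of $h$.
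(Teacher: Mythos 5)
Your proposal is correct and takes essentially the same route as the paper's proof: the same Ritz-projection splitting $V_i-v_i=\theta_i+\rho_i$, the same error equation with the Lipschitz hypothesis (H5) handling the nonlocal coefficient difference, the same crucial integration by parts turning $\left(b_1\frac{\partial \rho_i}{\partial y},\theta_i\right)$ into terms involving $\|\rho_i\|$ to preserve the optimal power $h^{k+1}$, and the same aggregation over all components followed by Gronwall. The only cosmetic deviations are that the paper treats $\left(b_1\frac{\partial \theta_i}{\partial y},\theta_i\right)$ via the exact identity $-\frac{\gamma'}{2\gamma}\|\theta_i\|^2$ rather than Young's inequality, and writes the coefficient-difference term against $\frac{\partial \tilde V_i}{\partial y}$ instead of $\frac{\partial v_i}{\partial y}$, which is equivalent by the Ritz property.
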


\begin{proof}
Let $e_i=V_i-v_i$ be written as
\begin{equation*}
e_i(y,t)=(V_i(y,t)-\tilde{V}_i(y,t))+(\tilde{V}_i
(y,t)-v_i(y,t))=\theta_i (y,t)+\rho_i (y,t),
\end{equation*}
with  $\tilde{V}_i(y,t)\in S_{h}^{k}$ being the Ritz projection of $v_i$. Then
\begin{equation*}
\left\Vert e_i(y,t)\right\Vert\leq \left\Vert \theta_i
(y,t)\right\Vert+\left\Vert \rho_i (y,t)\right\Vert
\end{equation*}
and, by Lemma \ref{errproj}, it follows that
\begin{equation*}
\left\Vert \rho_i (y,t)\right\Vert\leq Ch^{k+1}\left\Vert
v_i\right\Vert _{H^{k+1}},\quad t\in[0,T].
\end{equation*}
Concerning $\left\Vert \theta_i (y,t)\right\Vert$, if
\begin{equation*}
a_{i}^{(h)}=a_i(l(V_1),\dots,l(V_{{n_e}})),
\end{equation*}
then, for every $i\in\{1,\dots,{n_e}\}$, we have that\\

$\displaystyle\es{\pd{\theta_i}{t}}{ W_i}+a_{i}^{(h)}b_{2}\es{\frac{\partial \theta_i
}{\partial y}}{\frac{\partial W_i}{\partial y}}-\es{b_{1}\frac{
\partial \theta_i }{\partial y}}{W_i} $
\begin{eqnarray*}
&=&\es{\pd{V_i}{t}}{W_i}+a_{i}^{(h)}b_{2}\es{\frac{\partial V_i
}{\partial y}}{\frac{\partial W_i}{\partial y}}-\es{b_{1}\frac{
\partial V_i}{\partial y}}{W_i} \\
&&-\es{\pd{\tilde{V}_{i}}{t}}{W_i}-a_{i}^{(h)}b_{2}\es{
\frac{\partial \tilde{V}_i}{\partial y}}{\frac{\partial W_i}{
\partial y}}+\es{b_{1}\frac{\partial \tilde{V}_i}{
\partial y}}{W_i} \\
&=&\es{g_i}{W_{i}}-\es{\pd{v_{i}}{t}}{W_i}-a_ib_{2}\es{\frac{
\partial \tilde{V}_i}{\partial y}}{\frac{\partial W_i}{
\partial y}}+\es{b_{1}\frac{\partial v_i}{\partial y}}{W_i} \\
&&+(a_i-a_{i}^{(h)})b_{2}\es{\frac{\partial \tilde{V}_i}{
\partial y}}{\frac{\partial W_i}{\partial y}}+\es{b_{1}(\frac{\partial \tilde{V}_i}{\partial y}-
\frac{\partial v_i}{\partial y})}{W_i}\\
&&+\es{\pd{v_{i}}{t}-\pd{\tilde{V}_{i}}{t}}{W_i} \\
&=&(a_i-a_{i}^{(h)})b_{2}\es{\frac{\partial \tilde{V}_i}{
\partial y}}{\frac{\partial W_i}{\partial y}}+\es{b_{1}(\frac{
\partial \tilde{V}_i}{\partial y}-\frac{\partial v_i}{\partial y}
)}{W_i}\\
&&+\es{\pd{v_{i}}{t}-\pd{\tilde{V}_{i}}{t}}{W_i}.
\end{eqnarray*}
If we consider $W_i=\theta_i$, then
$$\es{\pd{\theta_{i}}{t}}{\theta_i}+a_{i}^{(h)}b_{2}\left\| \frac{\partial \theta_i }{\partial y}\right\| ^{2}
=(a_i-a_{i}^{(h)})b_{2}\es{\frac{\partial \tilde{V}_i}{\partial y}}{\frac{\partial \theta_i }
{\partial y}}+\es{b_{1}\frac{\partial \rho_i }{\partial y}}{\theta_i}-\es{\pd{\rho_{i}}{t}}{\theta_i }$$$$+\es{b_{1}\frac{\partial \theta_i }{\partial y}}{\theta_i}.$$
Integrating by parts the second and the fourth terms on the right side of the above equation, we obtain\\

$\displaystyle
\frac12\frac{d}{dt}\|\theta_i\|^2+a_{i}^{(h)}b_{2}\left\| \frac{\partial \theta_i }{\partial y}\right\|^{2}$\\

\begin{flushright}
$\displaystyle
=(a_i-a_{i}^{(h)})b_{2}\es{\frac{\partial \tilde{V}_i
}{\partial y}}{\frac{\partial \theta_i }{\partial y}}-\es{\pd{\rho_{i}}{t}}{\theta_i}-\frac{\gamma ^{^{\prime }}(t)}{\gamma (t)}
\es{\rho_i}{ \theta_i}-\es{b_{1}\rho_i}{ \frac{\partial \theta_i }{
\partial y}}-\frac{\gamma ^{^{\prime }}(t)}{2\gamma (t)}\es{\theta_i}{\theta_i}.$
\end{flushright}
Taking the absolute value of the expression on the right hand side of this equation and considering the lower limits of $a_i$ and $b_{i}$, it follows that\\
$\displaystyle\frac12\frac{d}{dt}\|\theta_i\|^2+\frac{m_a}{\gamma_1^2}\left\|\pd{\theta_i}{y}\right\|^2$\\

$\displaystyle
\leq\left\vert a_i-a_{i}^{(h)}\right\vert \frac{1}{\gamma _{0}^{2}}\int_{0}^{1}\left\vert \frac{\partial\tilde{V}_i}
{\partial y}\right\vert \left\vert \frac{\partial \theta_i }{\partial y}\right\vert dy+\int_{0}^{1}\left\vert \pd{\rho_{i}}{t}\right\vert
\left\vert \theta_i \right\vert dy+\frac{\gamma _{\max }^{^{\prime }}}{\gamma _{0}}\int_{0}^{1}\left\vert
\rho_i \right\vert \left\vert \theta_i \right\vert dy$\\

$\displaystyle\quad +\frac{\alpha _{\max
}^{\prime }+\gamma _{\max }^{^{\prime }}}{\gamma _{0}}\int_{0}^{1}\left\vert
\rho_i \right\vert \left\vert \frac{\partial \theta_i }{\partial y}\right\vert dy+\frac{\gamma_{\max} ^{^{\prime }}}{2\gamma_0}\int_{0}^{1}|\theta_i|
^{2}dy$\\

$\displaystyle\leq C_{1}\left\vert a_i-a_{i}^{(h)}\right\vert^2+ \frac{m_a}{2\gamma _{1}^{2}}
\int_{0}^{1}\left\vert \frac{\partial \theta_i }{\partial y}\right\vert ^{2}dy+
\frac{1}{2}\int_{0}^{1}\left\vert \pd{\rho_{i}}{t}\right\vert ^{2}dy+\frac{1}{2}
\int_{0}^{1}\left\vert \theta_i \right\vert ^{2}dy$\\

$\displaystyle\quad +\frac{\gamma _{\max }^{^{\prime }}}{2\gamma _{0}}\int_{0}^{1}\left\vert
\rho_i \right\vert ^{2}dy+\frac{\gamma _{\max }^{^{\prime }}}{2\gamma _{0}}
\int_{0}^{1}\left\vert \theta_i \right\vert ^{2}dy+C_{2}\int_{0}^{1}\left\vert
\rho_i \right\vert ^{2}dy+\frac{m_{a}}{2\gamma _{1}^{2}}\int_{0}^{1}\left\vert
\frac{\partial \theta_i }{\partial y}\right\vert ^{2}dy,$\\

with $C_1=C_1(m_a,\gamma_0,\left\Vert \pd {\tilde V_i}{y}\right\Vert_{L_{\infty}(0,T;L_2(\Omega))})$.\\ Since $\left\Vert \pd {\tilde V_i}{y}\right\Vert\leq \left\Vert \pd {v_i}{y}\right\Vert$, we have that $C_1=C_1(m_a,\gamma_0,\left\Vert \pd {v_i}{y}\right\Vert_{L_{\infty}(0,T;L_2(\Omega))})$.
Then, by (H5),
\begin{eqnarray*}
\frac{1}{2}\frac{d}{dt}\left\Vert \theta_i \right\Vert ^{2} &\leq
&C_{3}\sum_{j=1}^{{n_e}}\left\Vert \rho_j \right\Vert^{2}+C_{4}\sum_{j=1}^{{n_e}}\left\Vert \theta_j
\right\Vert ^{2}+\frac{1}{2}\left\Vert \pd{\rho_{i}}{t}\right\Vert
^{2}+\frac{1}{2}\left\Vert \theta_i \right\Vert^{2}
\\
&&+\frac{\gamma _{\max }^{^{\prime }}}{2\gamma _{0}}\left\Vert \rho_i
\right\Vert^{2}+\frac{\gamma _{\max }^{^{\prime }}}{2\gamma
_{0}}\left\Vert \theta_i \right\Vert^{2}+C_{2}\left\Vert \rho_i
\right\Vert^{2} \\
&\leq &C\sum_{j=1}^{{n_e}}\left\Vert \theta_j \right\Vert^{2}+C\sum_{j=1}^{{n_e}}\left\Vert \rho_j
\right\Vert^{2}+\frac{1}{2}\left\Vert \pd{\rho_{i}}{t}\right\Vert^{2}.
\end{eqnarray*}
and now $C=C(m_a,\gamma_0,\gamma _{\max }^{^{\prime }},\left\Vert \pd {v_i}{y}\right\Vert_{L_{\infty}(0,T;L_2(\Omega))})$.
Hence, we obtain\\
$$ \frac{d}{dt}\left(\sum_{i=1}^{{n_e}}\left\Vert \theta_i \right\Vert ^{2}\right)\leq C\sum_{i=1}^{{n_e}}\left\Vert \theta_i \right\Vert^{2}+C\sum_{i=1}^{{n_e}}\left\Vert \rho_i
\right\Vert^{2}+\sum_{i=1}^{{n_e}}\left\Vert \pd{\rho_{i}}{t}\right\Vert^{2}.$$
Applying Gronwall's Theorem, we arrive at the inequality
\begin{equation*}
\sum_{i=1}^{{n_e}}\left\Vert \theta_i \right\Vert^{2}\leq C\sum_{i=1}^{{n_e}}\left\Vert \theta_i
(y,0)\right\Vert^{2}+C\sum_{i=1}^{{n_e}}\int_{0}^{T}\left\Vert \rho_i \right\Vert^{2}+\left\Vert \pd{\rho_{i}}{t}\right\Vert^{2}dt.
\end{equation*}
By the hypothesis of the theorem, we have, for every $i\in\{1,\dots,{n_e}\}$,
\begin{eqnarray*}
\left\Vert \theta_i (y,0)\right\Vert^{2} &\leq &\left\Vert
e_i(y,0)\right\Vert^{2}=\left\Vert V_i(y,0)-v_{i0}\right\Vert
^{2}\leq Ch^{2(k+1)}\|v_{i0}\|_{H^{k+1}}^{2}, \\
\int_{0}^{T}\left\Vert \rho_i \right\Vert^{2} \ dt&\leq
&CTh^{2(k+1)}\left\Vert v_i\right\Vert _{L_2(0,T;H^{k+1}(\Omega))}^{2},\\
\int_{0}^{T}\left\Vert \pd{\rho_{i}}{t} \right\Vert^{2}\ dt &\leq
&CTh^{2(k+1)}\left\|\pd{v_{i}}{t}\right\|_{L_2(0,T;H^{k+1}(\Omega))}^2
\end{eqnarray*}
and so
$$\displaystyle\sum_{i=1}^{{n_e}}\left\Vert \theta_i \right\Vert^{2}\leq C\left( \sum_{i=1}^{{n_e}}\left\Vert
v_{i0}\right\Vert _{H^{k+1}}^{2}+\sum_{i=1}^{{n_e}}\left\Vert v_i\right\Vert
_{L_2(0,T;H^{k+1}(\Omega))}^{2} \right.$$
$$\left.+\sum_{i=1}^{{n_e}}\left\Vert \pd{v_i}{t}\right\Vert
_{L_2(0,T;H^{k+1}(\Omega))}^{2} \right) h^{2(k+1)}.
$$
Hence
\begin{equation*}
\left\Vert \theta_i \right\Vert\leq Ch^{k+1},\quad i=1,\dots {n_e}
\end{equation*}
and adding the estimate of $\rho_i$, we obtain the desired result.
\end{proof}
It is important to note that Gronwall's constant depends on the ratio $\frac{\gamma'_{\max}}{\gamma_0}$. So, if $\gamma'_{\max}$ is high and $\gamma_0$ is small, then, for long time computations, the mesh size should be small enough to compensate for this behaviour.

\section{Discrete problem}
In this section, we will study the applicability of three known finite diference schemes to discretise in time equation (\ref{probsd}). At the end, we will comment the results.
Let $\delta >0$ and consider the partition $]0,T]=\overset{n_{i}-1}{\underset%
{j=1}{\cup }}]t_{j-1},t_{j}]=\overset{n_{i}-1}{\underset{j=1}{\cup }}I_{j},$
$\delta =t_{j}-t_{j-1}$ and $int(I_{j})\cap int(I_{i})=\emptyset $. Let $\mathbf{V}^{(n)}(y)$ be the approximation of $\mathbf{v}(y,t_n)$ in $(S_h^k)^{n_e}$. In the subsequente, the notation $V^{(n)}$ represents the function $V$ evaluated at time $t_n$.


\subsection{Backward Euler method}

First we are going to study the backward Euler method.
This method  evaluates the  equation at the points $t_{n+1}$, $n=0,\dots, n_i-1$, and approximates the time derivative by
$$\pd{\mathbf{V}}{t}(y,t_{n+1})\approx \frac{\mathbf{V}^{(n+1)}(y)-\mathbf{V}^{(n)}(y)}{\delta}=\bar\partial \mathbf{V}^{(n+1)}(y).$$
In this case,  system (\ref{probsd}) becomes
$$\es{\bar\partial V_i^{(n+1)}(y)}{W_i}+b_2^{(n+1)}a_i^{(n+1)}\es{\pd{V_i^{(n+1)}}{y}}{\pd{W_i}{y}}-\es{b_1^{(n+1)}\pd{V_i^{(n+1)}}{y}}{W_i} $$
\begin{equation}\label{eq_E_im}
=\es{g_i^{(n+1)}}{W_i},
\end{equation}
with $a_i^{(n+1)}=a_i(l^{(n+1)}(V_1^{(n+1)}),\dots,l^{(n+1)}(V_{n_e}^{(n+1)}))$.
Recalling the basis $\{\varphi_j\}_{j=1}^{n_p}$, system (\ref{eq_E_im}) is a nonlinear algebraic system of the form
$$(\mathcal{M}+\delta \mathcal{A}a(\mathcal{V}^{(n+1)})-\delta \mathcal{B})\mathcal{V}^{(n+1)}=\mathcal{M}\mathcal{V}^{(n)}+\delta \mathcal{G},$$
with the unknown $$\mathcal{V}^{(n+1)}=(V_{1,1}^{(n+1)},\dots,V_{1,n_p}^{(n+1)},\dots,V_{n_e,1}^{(n+1)},\dots,\dots,V_{n_e,n_p}^{(n+1)}).$$
Due to its nonlinearity, we need to prove the existence of a solution.
\begin{theorem}\label{exi_E_im}
For each  $n=0,\dots, n_i-1$, system (\ref{eq_E_im}) has a solution.
\end{theorem}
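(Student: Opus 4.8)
The plan is to recast the solvability of the nonlinear system (\ref{eq_E_im}) as the existence of a zero of a suitable continuous map on the finite-dimensional space $X=(S_h^k)^{n_e}$, equipped with the product inner product $\es{\mathbf{V}}{\mathbf{W}}=\sum_{i=1}^{n_e}\es{V_i}{W_i}$, and then to invoke the standard consequence of Brouwer's fixed point theorem: if $P\colon X\to X$ is continuous and there is a radius $\mu>0$ with $\es{P(\boldsymbol{\xi})}{\boldsymbol{\xi}}>0$ for every $\boldsymbol{\xi}$ of norm $\mu$, then $P$ vanishes somewhere in the closed ball of radius $\mu$. Concretely, for fixed $n$ and known $\mathbf{V}^{(n)}$, I would define $P$ through Riesz representation by
\begin{equation*}
\es{P(\mathbf{V})}{\mathbf{W}}=\sum_{i=1}^{n_e}\Big[\es{V_i-V_i^{(n)}}{W_i}+\delta\, b_2^{(n+1)}a_i^{(n+1)}(\mathbf{V})\es{\pd{V_i}{y}}{\pd{W_i}{y}}-\delta\es{b_1^{(n+1)}\pd{V_i}{y}}{W_i}-\delta\es{g_i^{(n+1)}}{W_i}\Big],
\end{equation*}
so that, after multiplying (\ref{eq_E_im}) by $\delta$, a vector $\mathbf{V}$ solves the system precisely when $P(\mathbf{V})=0$. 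Continuity of $P$ is immediate, since all the forms are (bi)linear in finite dimensions and $\mathbf{V}\mapsto a_i^{(n+1)}(\mathbf{V})$ is continuous because $l^{(n+1)}$ is a linear form and each $a_i$ is Lipschitz-continuous by $(H5)$.

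The core of the argument is the a priori estimate obtained by testing with $\mathbf{W}=\mathbf{V}$. I would bound the four groups of terms separately: the discrete time-derivative term gives $\es{V_i-V_i^{(n)}}{V_i}\ge\|V_i\|^2-\|V_i^{(n)}\|\,\|V_i\|$; the diffusion term $\delta\, b_2^{(n+1)}a_i^{(n+1)}(\mathbf{V})\|\pd{V_i}{y}\|^2$ is nonnegative since $b_2>0$ and $a_i\ge m_a>0$; and the source is controlled by $-\delta\es{g_i^{(n+1)}}{V_i}\ge-\delta\|g_i^{(n+1)}\|\,\|V_i\|$. The delicate term is the advection contribution, and here the key observation is to integrate it by parts: using $V_i\in H_0^1(\Omega)$ and $\pd{b_1^{(n+1)}}{y}=\gamma'/\gamma$ one finds $\es{b_1^{(n+1)}\pd{V_i}{y}}{V_i}=-\tfrac{\gamma'}{2\gamma}\|V_i\|^2$, so that $-\delta\es{b_1^{(n+1)}\pd{V_i}{y}}{V_i}=\delta\tfrac{\gamma'}{2\gamma}\|V_i\|^2\ge 0$, the sign being favourable precisely because $\gamma$ is assumed increasing.

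Collecting these bounds and applying the Cauchy--Schwarz inequality in the index $i$ leaves
\begin{equation*}
\es{P(\mathbf{V})}{\mathbf{V}}\ge\|\mathbf{V}\|^2-C\,\|\mathbf{V}\|,\qquad C=\Big(\sum_{i=1}^{n_e}\big(\|V_i^{(n)}\|+\delta\|g_i^{(n+1)}\|\big)^2\Big)^{1/2}.
\end{equation*}
Since the right-hand side is strictly positive whenever $\|\mathbf{V}\|>C$, choosing any $\mu>C$ makes $\es{P(\mathbf{V})}{\mathbf{V}}>0$ on the sphere $\|\mathbf{V}\|=\mu$, and the Brouwer consequence produces a zero of $P$, that is, a solution of (\ref{eq_E_im}).

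I expect the main obstacle to be the advection term: a naive Cauchy--Schwarz and Young treatment of $\es{b_1^{(n+1)}\pd{V_i}{y}}{V_i}$ would either introduce a $\|\pd{V_i}{y}\|^2$ contribution that has to be absorbed into the diffusion term at the price of a Poincar\'e constant, or would leave a negative multiple of $\|V_i\|^2$ and thereby force a smallness restriction on $\delta$. The integration by parts together with the monotonicity of $\gamma$ sidesteps this entirely and makes the existence statement unconditional in $\delta$, matching the hypothesis-free form of the theorem. Note finally that this argument yields only existence; uniqueness is a separate matter that would exploit the Lipschitz structure of the $a_i$ and, in general, a condition on the data or on $\delta$.
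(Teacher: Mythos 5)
Your proof is correct and follows essentially the same route as the paper: both recast (\ref{eq_E_im}) as the zero of a continuous variational map, integrate the advection term by parts to exploit the monotonicity of $\gamma$ (so that $-\delta\es{b_1^{(n+1)}\pd{V_i}{y}}{V_i}\geq 0$), establish positivity of $\es{P(\mathbf{V})}{\mathbf{V}}$ on a sufficiently large sphere, and conclude by the corollary to Brouwer's fixed point theorem. The only notable refinement is that you define the map on the product space $(S_h^k)^{n_e}$, which treats the coupling of the components through $a_i^{(n+1)}(\mathbf{V})$ more cleanly than the paper's componentwise map $F\colon S_h^k\to S_h^k$.
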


\begin{proof}
Let $n\geq0$ be fixed. For each $h,\delta>0$, we define the continuous mapping $F:S_{h}^{k}\to S_{h}^{k}$ by
$$\es{F(V_i^{(n+1)})}{W_i} =\es{V_i^{(n+1)}}{W_i} -\es{ V_i^{(n)}}{W_i} +\delta b_2^{(n+1)}a_i^{(n+1)}\es{\pd{V_i^{(n+1)}}{y}}{\pd{W_i}{y}}$$
$$-\delta\es{b_1^{(n+1)}\pd{V_i^{(n+1)}}{y}}{W_i}  -\delta\es{g_i^{(n+1)}}{W_i}.$$
If $W_i=V_i^{(n+1)}$, then
$$\es{F(V_i^{(n+1)})}{V_i^{(n+1)}}=\es{V_i^{(n+1)}}{V_i^{(n+1)}} -\es{V_i^{(n)}}{V_i^{(n+1)}} -\delta\es{g_i^{(n+1)}}{V_i^{(n+1)}}$$
$$-\delta\es{b_1^{(n+1)}\pd{V_i^{(n+1)}}{y}}{V_i^{(n+1)}} +\delta b_2^{(n+1)}a_i^{(n+1)}\es{\pd{V_i^{(n+1)}}{y}}{\pd{V_i^{(n+1)}}{y}}$$
$$\geq \left\|V_i^{(n+1)}\right\|^2+\delta\frac{\gamma'_{\max}}{2\gamma_0}\left\|V_i^{(n+1)}\right\|^2
-\delta\|g_i^{(n+1)}\|\|V_i^{(n+1)}\|+\delta\frac{CM_a}{\gamma_0^2}\left\|V_i^{(n+1)}\right\|^2$$
$$-\|V_i^{(n)}\|\|V_i^{(n+1)}\|$$
$$=\left\|V_i^{(n+1)}\right\|\left(\left\|V_i^{(n+1)}\right\|+\delta\frac{\gamma'_{\max}}{2\gamma_0}\left\|V_i^{(n+1)}\right\|
-\delta\|g_i^{(n+1)}\|+\delta\frac{CM_a}{\gamma_0^2}\left\|V_i^{(n+1)}\right\|\right.$$ $$\left.-\|V_i^{(n)}\|\right)$$
$$=\left\|V_i^{(n+1)}\right\|\left(\left(1+\delta\frac{\gamma'_{\max}}{2\gamma_0}+\delta\frac{CM_a}{\gamma_0^2}\right)
\left\|V_i^{(n+1)}\right\|-\delta\|g_i^{(n+1)}\|-\|V_i^{(n)}\|\right)$$
Let us define $$\varepsilon>\frac{\delta\|g_i^{(n+1)}\|+\|V_i^{(n)}\|}{1+\delta\frac{\gamma'_{\max}}{2\gamma_0}+\delta\frac{CM_a}{\gamma_0^2}}$$
and
$$B_{\varepsilon}=\{W\in S_{h}^{k}: \|W\|\leq \varepsilon\}.$$
Since $(F(V),V)>0$ for every $V\in\partial B_{\varepsilon}$ the corollary to the Brower's Fixed Point Theorem implies the existence of a solution to problem (\ref{eq_E_im}).
\end{proof}
The stability of this method is proved under a condition on the time step.
\begin{theorem} \label{est_E_im}
Let $\mathbf{V}^{(n+1)}(y)$ be the solution of equation (\ref{eq_E_im}). If
\begin{equation}\label{cond_d_1}
\delta<\frac{\gamma_0}{\gamma_0+\gamma'_{\max}},
\end{equation}
 then
$$\left\|V_i^{(n+1)}\right\|^2\leq C^{n+1}\left\|V_i^{(0)}\right\|^2+\sum_{l=0}^{n+1}C^{n-l+2}\delta\left\|g_i^{(l)}\right\|^2,$$
where $C$ could depend on  $\gamma'_{\max}$, $\gamma_0$ and $\delta$.
\end{theorem}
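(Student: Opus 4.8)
The plan is to use the standard discrete energy method, testing the scheme against the solution itself. First I would fix $n$ and $i$ and take $W_i=V_i^{(n+1)}$ in (\ref{eq_E_im}). The backward difference term is then handled with the polarization inequality $\es{V_i^{(n+1)}-V_i^{(n)}}{V_i^{(n+1)}}\geq\tfrac12\bigl(\|V_i^{(n+1)}\|^2-\|V_i^{(n)}\|^2\bigr)$, so that $\es{\bar\partial V_i^{(n+1)}}{V_i^{(n+1)}}\geq\frac{1}{2\delta}\bigl(\|V_i^{(n+1)}\|^2-\|V_i^{(n)}\|^2\bigr)$. The diffusion term equals $b_2^{(n+1)}a_i^{(n+1)}\|\partial_y V_i^{(n+1)}\|^2$, which is nonnegative since $b_2>0$ and $a_i\geq m_a>0$ by (H5); crucially, this lets me simply discard it, so the nonlocal dependence of $a_i$ never has to be estimated and its Lipschitz continuity plays no role in stability---only its positivity matters.

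Next I would treat the advection term exactly as in the proof of Theorem \ref{conv_h}: integrating $\es{b_1^{(n+1)}\partial_y V_i^{(n+1)}}{V_i^{(n+1)}}$ by parts and using that $V_i^{(n+1)}$ vanishes on $\partial\Omega$ together with $\partial_y b_1=\gamma'/\gamma$, the term collapses to $-\frac{\gamma'(t_{n+1})}{2\gamma(t_{n+1})}\|V_i^{(n+1)}\|^2$. Moving it to the right-hand side, bounding it by $\frac{\gamma'_{\max}}{2\gamma_0}\|V_i^{(n+1)}\|^2$ via $|\gamma'|\leq\gamma'_{\max}$ and $\gamma\geq\gamma_0$, and applying Young's inequality to the source as $2\delta\|g_i^{(n+1)}\|\,\|V_i^{(n+1)}\|\leq\delta\|g_i^{(n+1)}\|^2+\delta\|V_i^{(n+1)}\|^2$, I would arrive (after multiplying through by $2\delta$) at
\[
\Bigl(1-\delta-\delta\frac{\gamma'_{\max}}{\gamma_0}\Bigr)\|V_i^{(n+1)}\|^2\leq\|V_i^{(n)}\|^2+\delta\|g_i^{(n+1)}\|^2.
\]

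The key step---and the only real obstacle---is ensuring that the coefficient on the left is strictly positive, because the advection contributes a zeroth-order term in $\|V_i^{(n+1)}\|^2$ that is not dissipative and must be absorbed by the mass term produced by the time derivative. This is precisely where the time-step restriction enters: condition (\ref{cond_d_1}), namely $\delta<\gamma_0/(\gamma_0+\gamma'_{\max})$, is equivalent to $\delta(1+\gamma'_{\max}/\gamma_0)<1$, which makes $1-\delta-\delta\gamma'_{\max}/\gamma_0>0$. Setting $C=(1-\delta-\delta\gamma'_{\max}/\gamma_0)^{-1}>1$, I obtain the one-step recursion $\|V_i^{(n+1)}\|^2\leq C\|V_i^{(n)}\|^2+C\delta\|g_i^{(n+1)}\|^2$, with $C$ depending on $\gamma'_{\max}$, $\gamma_0$ and $\delta$ as claimed.

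Finally, I would iterate this recursion from step $0$ to step $n+1$ (a discrete Gronwall argument), which yields $\|V_i^{(n+1)}\|^2\leq C^{n+1}\|V_i^{(0)}\|^2+\sum_{l=1}^{n+1}C^{n-l+2}\delta\|g_i^{(l)}\|^2$; enlarging the sum to begin at $l=0$, each added term being nonnegative, gives exactly the stated estimate. I expect no difficulty beyond the positivity of the coefficient; the nonlinearity and nonlocality are inert here because the diffusion term is discarded wholesale.
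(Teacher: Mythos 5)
Your proposal is correct and follows essentially the same route as the paper's own proof: testing with $W_i=V_i^{(n+1)}$, discarding the nonnegative diffusion term, collapsing the advection term via integration by parts to $-\frac{(\gamma')^{(n+1)}}{2\gamma^{(n+1)}}\|V_i^{(n+1)}\|^2$, absorbing the cross and source terms with Cauchy/Young inequalities, and using condition (\ref{cond_d_1}) to keep the coefficient $1-\delta-\delta\gamma'_{\max}/\gamma_0$ positive before iterating. The minor bookkeeping differences (polarization versus Cauchy--Schwarz on the cross term, starting the final sum at $l=1$ and enlarging to $l=0$) are immaterial.
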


\begin{proof}
Setting $W_i=V_i^{n+1}$ in (\ref{eq_E_im}), we obtain the equality
$$\es{V_i^{(n+1)}}{V_i^{(n+1)}} +\delta b_2^{(n+1)}a_i^{(n+1)}\es{\pd{V_i^{(n+1)}}{y}}{\pd{V_i^{(n+1)}}{y}} =\es{V_i^{(n)}}{V_i^{(n+1)}}$$
$$ -\frac{\delta(\gamma')^{(n+1)}}{2(\gamma)^{(n+1)}}\es{V_i^{(n+1)}}{V_i^{(n+1)}} +\delta\es{g_i^{(n+1)}}{V_i^{(n+1)}}.$$
Thus
$$\left\|V_i^{(n+1)}\right\|^2\leq\frac12 \left\|V_i^{(n+1)}\right\|^2+\frac12 \left\|V_i^{(n)}\right\|^2 +\frac{\delta\gamma'_{\max}}{2\gamma_0}\left\|V_i^{(n+1)}\right\|^2$$
$$+\frac{\delta}{2}\left\|g_i^{(n+1)}\right\|^2+\frac{\delta}{2}\left\|V_i^{(n+1)}\right\|^2.$$
$$\left(\frac12-\frac{\delta\gamma'_{\max}}{2\gamma_0}-\frac{\delta}{2}\right)\left\|V_i^{(n+1)}\right\|^2\leq \frac12\left\|V_i^{(n)}\right\|^2+\frac{\delta}{2}\left\|g_i^{(n+1)}\right\|^2$$
From (\ref{cond_d_1}), it now follows that
$$\left\|V_i^{(n+1)}\right\|^2\leq C\left\|V_i^{(n)}\right\|^2 +\delta C\left\|g_i^{(n+1)}\right\|^2,$$
with $C=C(\gamma_0,\gamma'_{\max},\delta)$.
Iterating the result follows.
\end{proof}
As we suspected, the stability of this method depends on $\delta$ and it could be affected if $\delta$  is not sufficiently small to compensate for the ratio $\frac{\gamma'_{\max}}{\gamma_0}$.\\
The uniqueness of the solution is proved in the next theorem.
\begin{theorem} \label{uni_E_im}
If $\delta\approx h^2$ is sufficiently small, then the solution of equation (\ref{eq_E_im}) is unique.
\end{theorem}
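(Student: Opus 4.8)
The plan is to argue in the standard way for nonlinear schemes: assume that, for the fixed previous iterate $\mathbf{V}^{(n)}$, equation (\ref{eq_E_im}) admits two solutions $\mathbf{U}=(U_1,\dots,U_{n_e})$ and $\mathbf{Z}=(Z_1,\dots,Z_{n_e})$ in $(S_h^k)^{n_e}$, and show their difference vanishes. First I would subtract the two weak formulations and set $\eta_i=U_i-Z_i$; since the data $V_i^{(n)}$ and $g_i^{(n+1)}$ cancel, the discrete time derivative supplies the crucial coercive term $\tfrac1\delta\es{\eta_i}{W_i}$. The only genuinely nonlinear piece is the diffusion term, which I would split as
\[
a_i(l(U))\es{\pd{U_i}{y}}{\pd{W_i}{y}}-a_i(l(Z))\es{\pd{Z_i}{y}}{\pd{W_i}{y}}=a_i(l(U))\es{\pd{\eta_i}{y}}{\pd{W_i}{y}}+\big(a_i(l(U))-a_i(l(Z))\big)\es{\pd{Z_i}{y}}{\pd{W_i}{y}},
\]
isolating a ``good'' symmetric part and a ``bad'' part that carries the variation of the nonlocal coefficient.

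Next I would test with $W_i=\eta_i$. The first split term contributes $b_2a_i(l(U))\|\pd{\eta_i}{y}\|^2\ge0$, which I keep on the left-hand side. The advection term is handled exactly as in the proof of Theorem \ref{conv_h}: integrating by parts, using that $\eta_i$ vanishes on $\partial\Omega$ and that $\pd{b_1}{y}=\gamma'/\gamma$, gives $\es{b_1\pd{\eta_i}{y}}{\eta_i}=-\frac{\gamma'}{2\gamma}\|\eta_i\|^2$, bounded by $\frac{\gamma'_{\max}}{2\gamma_0}\|\eta_i\|^2$. For the bad part I would invoke the Lipschitz hypothesis (H5) together with the structure of $l$: since $|a_i(l(U))-a_i(l(Z))|\le L\sum_{j}|l(U_j)-l(Z_j)|=L\gamma\sum_j\left|\int_0^1\eta_j\,dy\right|\le L\gamma_1\sum_j\|\eta_j\|$, the coefficient gap is controlled by $\sum_j\|\eta_j\|$.

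The decisive step is to absorb the factor $\pd{Z_i}{y}$. Because $Z_i\in S_h^k$, an inverse inequality gives $\|\pd{Z_i}{y}\|\le C_{\mathrm{inv}}h^{-1}\|Z_i\|$, and the stability estimate of Theorem \ref{est_E_im} bounds $\|Z_i\|$ independently of $h$ (the smallness of $\delta$ below also guarantees condition (\ref{cond_d_1})). A Young inequality then splits the bad term into $\tfrac12 b_2a_i(l(U))\|\pd{\eta_i}{y}\|^2$, absorbed by the left-hand side, plus a remainder of size $Ch^{-2}\big(\sum_j\|\eta_j\|\big)^2$. Summing over $i$ and discarding the nonnegative gradient term yields
\[
\frac1\delta\sum_{i=1}^{n_e}\|\eta_i\|^2\le\left(Ch^{-2}+\frac{\gamma'_{\max}}{2\gamma_0}\right)\sum_{i=1}^{n_e}\|\eta_i\|^2,
\]
with $C$ independent of $h$ and $\delta$. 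Hence, whenever $\delta^{-1}$ exceeds $Ch^{-2}+\gamma'_{\max}/(2\gamma_0)$ — that is, $\delta$ of order $h^2$ with a sufficiently small constant — the bracketed factor is strictly dominated, forcing $\sum_i\|\eta_i\|^2=0$ and thus $\mathbf{U}=\mathbf{Z}$.

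I expect the main obstacle, and the reason the hypothesis $\delta\approx h^2$ is needed, to be precisely this bad term: the coefficient $a_i$ depends nonlocally on the unknown, so controlling $\es{\pd{Z_i}{y}}{\pd{\eta_i}{y}}$ costs an $h^{-1}$ factor through the inverse inequality on each of the two gradients, producing the $h^{-2}$ that the coercive $\delta^{-1}$ coming from the backward Euler time derivative must beat. I would also take care to check that the stability bound on $\|Z_i\|$ applies to \emph{any} solution rather than a distinguished one, since both candidates are a priori only known to exist through Theorem \ref{exi_E_im}; this is immediate, as the estimate in Theorem \ref{est_E_im} was obtained for an arbitrary solution of (\ref{eq_E_im}).
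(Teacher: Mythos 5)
Your proposal is correct and follows essentially the same route as the paper's proof: subtract the two solutions, split the nonlocal diffusion term into a coercive part plus a Lipschitz-controlled difference multiplying one solution's gradient, test with the difference, handle the advection term by integration by parts, and beat the resulting $h^{-2}$ factor (inverse estimate plus the stability bound of Theorem \ref{est_E_im}) with the $\delta^{-1}$ coercivity from the backward Euler term under $\delta\approx h^2$. The only differences are cosmetic (which solution's coefficient sits on the coercive term, and obtaining $h^{-2}$ via one inverse estimate plus Young's inequality rather than the paper's double inverse estimate on $\Vert \partial X_i/\partial y\Vert_\infty$), and your closing remark that the stability bound must hold for an arbitrary solution is a point the paper uses implicitly.
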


\begin{proof}
Suppose that equation (\ref{eq_E_im}) has two distinct solutions $\mathbf{X}$ and $\mathbf{Y}$, then
$$(\bar\partial X_i,W_i)+b_2^{(n+1)}a_{i}(l^{(n+1)}(X_1),\dots,l^{(n+1)}(X_{n_e}))
\left(\pd{X_i}{y},\pd{W_i}{y}\right)$$ $$-\left(b_1^{(n+1)}\pd{X_i}{y},W_i\right)=(g_i^{(n+1)},W_i)$$
and
$$(\bar\partial Y_i,W_i)+b_2^{(n+1)}a_{i}(l^{(n+1)}(Y_1),\dots,l^{(n+1)}(Y_{n_e}))
\left(\pd{Y_i}{y},\pd{W_i}{y}\right)$$ $$-\left(b_1^{(n+1)}\pd{Y_i}{y},W_i\right)=(g_i^{(n+1)},W_i)$$
Subtracting, we arrive at
$$(Y_i-Y_i,W_i)+\delta b_2^{(n+1)}\left(a_{i,1}^{(n+1)}\pd{X_i}{y}-a_{i,2}^{(n+1)}\pd{Y_i}{y},\pd{W_i}{y}\right)$$ $$-\delta\left(b_1^{(n+1)}\left(\pd{X_i}{y}-\pd{Y_i}{y}\right),W_i\right)=0.$$
Defining $E_i=Y_i-Y_i$, it follows that
$$(E_i,W_i)+\delta b_2^{(n+1)}a_{i,2}^{(n+1)}\left(\pd{E_i}{y},\pd{W_i}{y}\right)=\delta\left(b_1^{(n+1)}\pd{E_i}{y},W_i\right)+
\delta(a_{i,2}^{(n+1)}$$
$$-a_{i,1}^{(n+1)})\left(\pd{X_i}{y},\pd{W_i}{y}\right).$$
Setting $W_i=E_i$, we obtain
$$\|E_i\|^2+\frac{\delta m_a}{\gamma_1^2}\left\|\pd{E_i}{y}\right\|^2\leq \frac{\delta \gamma'_{\max}}{2\gamma_0}\|E_i\|^2+\frac{\delta\gamma_1^2 C}{4m_a}\left\|\pd{X_i}{y}\right\|_{\infty}\sum_{j=1}^{n_e}\|E_j\|^2+\frac{\delta m_a}{\gamma_1^2}\left\|\pd{E_i}{y}\right\|^2,$$
whence
$$\sum_{j=1}^{n_e}\|E_j\|^2\leq \frac{\delta \gamma'_{\max}}{2\gamma_0}\sum_{j=1}^{n_e}\|E_j\|^2+ \frac{\delta\gamma_1^2 C}{4m_a}\left\|\pd{X_i}{y}\right\|_{\infty}\sum_{j=1}^{n_e}\|E_j\|^2$$
and thus
$$\left(1-\frac{\delta \gamma'_{\max}}{2\gamma_0}-  \frac{\delta\gamma_1^2 C}{4m_a}\left\|\pd{X_i}{y}\right\|_{\infty}\right)\sum_{j=1}^{n_e}\|E_j\|^2\leq 0.$$
Using the inverse estimates valid in $S_h^k$, we can prove that
$$\left\|\pd{X_i}{y}\right\|_{\infty}\leq Ch^{-1}\left\|\pd{X_i}{y}\right\|\leq h^{-2}\left\|X_i\right\|.$$
By Theorem \ref{est_E_im}, the result is proved, provided that  $\delta\approx h^2$ is sufficiently small.
\end{proof}

The next theorem establishes optimal convergence order conditions for this scheme.

\begin{theorem}  \label{conv_E_im}
Suppose that $\delta$ is small. If $\mathbf{v}$ is the solution of (\ref{prob1}) and
$\mathbf{V}^{(n+1)}$ is the solution of (\ref{eq_E_im}), then
\begin{equation*}
\Vert V_{i}^{(n+1)}(y)-v_{i}(y,t_{n+1})\Vert \leq C(h^{k+1}+\delta),\quad i=1,\dots ,{n_e},\quad
n=1,\dots ,n_i,
\end{equation*}
where $C$ does not depend on $h$, $k$ or $\delta $, but could depend on $\gamma_0$, $\gamma_1$, $m_a$, $\gamma_{\max}'$, $\alpha'_{\max}$, $\left\|\pd{{}^2\mathbf{v}}{t^2}\right\|_{L_\infty(0,T;L_2(0,1))}$,  $\left\|\pd{\mathbf{v}}{t}\right\|_{L_\infty(0,T;H^{k+1}(0,1))}$, $\left\|\pd{\mathbf{v}}{y}\right\|_{L_\infty(0,T;L_2(0,1))}$, $\left\|\pd{\mathbf{v}}{y}\right\|_{L_\infty(0,T;H^{k+1}(0,1))}$  and $\left\|\mathbf{v}\right\|_{L_\infty(0,T;H^{k+1}(0,1))}$.
\end{theorem}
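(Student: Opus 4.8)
The plan is to adapt the error-splitting argument of Theorem \ref{conv_h} to the fully discrete setting, adding the truncation error produced by the backward Euler quotient. I would begin by writing $e_i^{(n+1)} = V_i^{(n+1)} - v_i(\cdot,t_{n+1}) = \theta_i^{(n+1)} + \rho_i^{(n+1)}$, where $\rho_i^{(n+1)} = \tilde V_i^{(n+1)} - v_i(\cdot,t_{n+1})$ is the Ritz projection error and $\theta_i^{(n+1)} = V_i^{(n+1)} - \tilde V_i^{(n+1)} \in S_h^k$. By Lemma \ref{errproj} the $\rho$ part is immediately $O(h^{k+1})$, so the whole task reduces to estimating $\theta_i^{(n+1)}$, which by the triangle inequality controls $\|e_i^{(n+1)}\|$.

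Next I would derive the error equation for $\theta_i$ by subtracting the weak formulation (\ref{fracav}) evaluated at $t_{n+1}$ from the discrete equation (\ref{eq_E_im}), using the defining property of the Ritz projection to cancel the principal diffusion contribution of $\rho_i$. Setting $W_i = \theta_i^{(n+1)}$, the resulting identity reads, schematically,
\begin{equation*}
\es{\bar\partial \theta_i^{(n+1)}}{\theta_i^{(n+1)}} + b_2^{(n+1)} a_i^{(n+1)} \left\| \pd{\theta_i^{(n+1)}}{y} \right\|^2 = \sum_{\text{terms}},
\end{equation*}
where the right-hand side collects (i) the nonlocal coefficient discrepancy $(a_i - a_i^{(n+1)})\,b_2^{(n+1)} \es{\pd{\tilde V_i^{(n+1)}}{y}}{\pd{\theta_i^{(n+1)}}{y}}$, (ii) the advection/projection terms of the type $\es{b_1^{(n+1)}\rho_i^{(n+1)}}{\pd{\theta_i^{(n+1)}}{y}}$ together with the lower-order $-\frac{\gamma'}{2\gamma}$ contributions arising after integration by parts, (iii) the residual $-\es{\bar\partial \rho_i^{(n+1)}}{\theta_i^{(n+1)}}$, and (iv) the backward Euler truncation term $-\es{\omega_i^{(n+1)}}{\theta_i^{(n+1)}}$ with $\omega_i^{(n+1)} = \bar\partial v_i^{(n+1)} - \pd{v_i}{t}(\cdot,t_{n+1})$. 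On the left I would use $\es{\bar\partial \theta_i^{(n+1)}}{\theta_i^{(n+1)}} \geq \frac{1}{2\delta}\left(\|\theta_i^{(n+1)}\|^2 - \|\theta_i^{(n)}\|^2\right)$ and the coercivity bound $a_i^{(n+1)} b_2^{(n+1)} \geq m_a/\gamma_1^2$.

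The crucial structural point is term (i). Using (H5) and the linearity of $l$, $|a_i - a_i^{(n+1)}| \leq C\sum_{j=1}^{n_e} |l(v_j) - l(V_j^{(n+1)})| \leq C\sum_{j=1}^{n_e} \|e_j^{(n+1)}\| \leq C \sum_{j=1}^{n_e}\left(\|\theta_j^{(n+1)}\| + \|\rho_j^{(n+1)}\|\right)$, which couples all the components and produces the $\sum_j \|\theta_j^{(n+1)}\|^2$ contribution characteristic of the nonlocal system. After applying Cauchy--Schwarz and Young's inequality to each term and absorbing every $\|\pd{\theta_i^{(n+1)}}{y}\|^2$ into the diffusion term on the left, I would sum over $i$ to obtain a recursion
\begin{equation*}
(1 - C\delta) \sum_{i=1}^{n_e}\|\theta_i^{(n+1)}\|^2 \leq \sum_{i=1}^{n_e}\|\theta_i^{(n)}\|^2 + C\delta \sum_{i=1}^{n_e}\left( \|\rho_i^{(n+1)}\|^2 + \|\bar\partial \rho_i^{(n+1)}\|^2 + \|\omega_i^{(n+1)}\|^2 \right).
\end{equation*}

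Finally I would estimate the data on the right: $\|\rho_i^{(n+1)}\| = O(h^{k+1})$ by Lemma \ref{errproj}; since $\bar\partial \rho_i^{(n+1)} = \frac1\delta \int_{t_n}^{t_{n+1}} \partial_t \rho_i\, dt$, this is again $O(h^{k+1})$ in terms of $\|\pd{v_i}{t}\|_{H^{k+1}}$; and a Taylor expansion of the backward difference gives $\|\omega_i^{(n+1)}\| \leq C\delta\, \|\pd{{}^2 v_i}{t^2}\|$. For $\delta$ small enough that $1 - C\delta \geq \tfrac12$, a discrete Gronwall lemma applied to the recursion yields $\sum_i \|\theta_i^{(n+1)}\|^2 \leq C(h^{2(k+1)} + \delta^2)$, the exponential factor $e^{CT}$ carrying the dependence on $\gamma_0$, $\gamma_{\max}'$, $m_a$ and the norms of $\mathbf{v}$ recorded in the statement; combined with the $\rho$ estimate this gives the claimed $O(h^{k+1} + \delta)$ bound. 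The main obstacle I anticipate is precisely the implicit-in-time nonlocal coupling in term (i): because $a_i^{(n+1)}$, and hence the coefficient error, depends on $\theta^{(n+1)}$ at the new time level, one must ensure the resulting $\sum_j \|\theta_j^{(n+1)}\|^2$ term is absorbable, which is what forces the smallness assumption on $\delta$ and closes the argument.
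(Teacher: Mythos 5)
Your proposal is correct and follows essentially the same argument as the paper's proof: the same Ritz-projection splitting $e_i^{(n+1)}=\theta_i^{(n+1)}+\rho_i^{(n+1)}$, the same error equation tested with $W_i=\theta_i^{(n+1)}$, the Lipschitz hypothesis (H5) to bound the nonlocal coefficient discrepancy by $\sum_j\bigl(\|\theta_j^{(n+1)}\|+\|\rho_j^{(n+1)}\|\bigr)$, and the same absorption of the implicit coupling for small $\delta$ followed by iteration of the recursion $(1-C\delta)\sum_i\|\theta_i^{(n+1)}\|^2\leq\sum_i\|\theta_i^{(n)}\|^2+C\delta(\delta^2+h^{2(k+1)})$. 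The only differences are bookkeeping: the paper groups your terms (iii)--(iv) into the single quantity $\bigl(\pd{v_i}{t}\bigr)^{(n+1)}-\bar\partial\tilde v_i^{(n+1)}$ before splitting it exactly as you do, and it bounds the advection term via $\bigl\|\pd{\rho_i^{(n+1)}}{y}\bigr\|$ instead of integrating it by parts, neither of which changes the structure or the conclusion.
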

\begin{proof}
Set $V_i^{(n+1)}-v_i^{(n+1)}=V_i^{(n+1)}-\tilde v_i^{(n+1)}+\tilde v_i^{(n+1)}=\theta_i^{(n+1)}+\rho_i^{(n+1)}.$ By Lemma \ref{errproj}, we have that
$$\left\Vert \rho_i^{(n+1)}\right\Vert\leq Ch^{k+1}\left\Vert
v_i\right\Vert _{H^{k+1}},\quad n=1,\dots, n_i.$$
For $\theta_i$, we set
$$\es{\bar\partial\theta_i^{(n+1)}}{W_i}+b_2^{(n+1)}a_i^{(n+1)}\es{\pd{\theta_i^{(n+1)}}{y}}{\pd{W_i}{y}} -\es{b_1^{(n+1)}\pd{\theta_i^{(n+1)}}{y}}{W_i}$$
$$=\es{\bar\partial V_i^{(n+1)}}{W_i} +b_2^{(n+1)}a_i^{(n+1)}\es{\pd{V_i^{(n+1)}}{y}}{\pd{W_i}{y}} -\es{b_1^{(n+1)}\pd{V_i^{(n+1)}}{y}}{W_1}$$
$$-\es{\bar\partial\tilde v_i^{(n+1)}}{W_i} -b_2^{(n+1)}a_i^{(n+1)}\es{\pd{\tilde v_i^{(n+1)}}{y}}{\pd{W_i}{y}} +\es{b_1^{(n+1)}\pd{\tilde v_i^{(n+1)}}{y}}{W_i}$$
$$=\es{g_i^{(n+1)}}{W_i}-
\es{\bar\partial\tilde v_i^{(n+1)}}{W_i}-b_2^{(n+1)}a_i^{(n+1)}\es{\pd{\tilde v_i^{(n+1)}}{y}}{\pd{W_i}{y}}$$ $$+\es{b_1^{(n+1)}\pd{\tilde v_i^{(n+1)}}{y}}{W_i}$$
$$=\es{\left(\pd{v_i}{t}\right)^{(n+1)}}{W_i} +b_2^{(n+1)}a_i^{(n+1)}(\mathbf{v}^{(n+1)})\es{\pd{v_i^{(n+1)}}{y}}{\pd{W_i}{y}}$$ $$-\es{b_1^{(n+1)}\pd{v_i^{(n+1)}}{y}}{W_i}-\es{\bar\partial\tilde v_i^{(n+1)}}{W_i}-b_2^{(n+1)}a_i^{(n+1)}(\mathbf{V}^{(n+1)})\es{\pd{v_i^{(n+1)}}{y}}{\pd{W_i}{y}}$$
$$ +\es{b_1^{(n+1)}\pd{\tilde v_i^{(n+1)}}{y}}{W_i}$$
$$=\es{\left(\pd{v_i}{t}\right)^{(n+1)}-\bar\partial\tilde v_i^{(n+1)}}{W_i} +b_2^{(n+1)}(a_i^{(n+1)}(\mathbf{v}^{(n+1)})$$
$$-a_i^{(n+1)}(\mathbf{V}^{(n+1)}))\es{\pd{ v_i^{(n+1)}}{y}}{\pd{W_i}{y}} +\es{b_1^{(n+1)}\left(\pd{\tilde v_i^{(n+1)}}{y}-\pd{v_i^{(n+1)}}{y}\right)}{W_i}.$$
Making $W_i=\theta_i^{(n+1)}$ and taking in to account the lower bounds of $a$ and $b_2$, we obtain
$$\frac12\bar\partial\ \|\theta_i^{(n+1)}\|^2+\frac{m_a}{\gamma_1^2}\left\|\pd{\theta_i^{(n+1)}}{y}\right\|^2\leq \es{b_1^{(n+1)}\pd{\theta_i^{(n+1)}}{y}}{\theta_i^{(n+1)}}$$
$$+\es{\left(\pd{v_i}{t}\right)^{(n+1)}-\bar\partial\tilde v_i^{(n+1)}}{\theta_i^{(n+1)}} +\es{b_1^{(n+1)}\left(\pd{\tilde v_i^{(n+1)}}{y}-\pd{v_i^{(n+1)}}{y}\right)}{\theta_i^{(n+1)}} $$
\begin{equation}\label{eq_aux_E_im}
+b_2^{(n+1)}(a_i^{(n+1)}(\mathbf{v}^{(n+1)})-a_i^{(n+1)}(\mathbf{V}^{(n+1)}))\es{\pd{v_i^{(n+1)}}{y}}{\pd{\theta_i^{(n+1)}}{y}}.
\end{equation}
Using the hypothesis $(H1)$ and $(H2)$ and integration by parts, we obtain
$$\es{b_1^{(n+1)}\pd{\theta_i^{(n+1)}}{y}}{\theta_i^{(n+1)}}=-\frac{(\gamma')^{(n+1)}}{2\gamma^{(n+1)}}\left\|\theta_i^{(n+1)}\right\|^2.$$
By $(H5)$, we have
$$|a_i^{(n+1)}(\mathbf{v}^{(n+1)})-a_i^{(n+1)}(\mathbf{V}^{(n+1)})|\leq |\gamma^{(n+1)}|\sum_{i=1}^{n_e}C_i\|v_i^{(n+1)}-V_i^{(n+1)}\|$$
$$\leq \gamma_1\left(\sum_{i=1}^{n_e}C_i\|\theta_i^{(n+1)}\|+\sum_{i=1}^{n_e}C_i\|\rho_i^{(n+1)}\|\right).$$
Taking the absolute value of the expression on the right-hand side of inequality (\ref{eq_aux_E_im}) and using the Cauchy inequality, it follows that
$$\frac12\bar\partial\ \|\theta_i^{(n+1)}\|^2+\frac{m_a}{\gamma_1^2}\left\|\pd{\theta_i^{(n+1)}}{y}\right\|^2\leq\frac{\gamma'_{\max}}{2\gamma_0}\|\theta_i^{(n+1)}\|^2 +\frac12\left\|\left(\pd{v_i}{t}\right)^{(n+1)}-\bar\partial\tilde v_i^{(n+1)}\right\|^2$$
$$+\frac12\|\theta_i^{(n+1)}\|^2 +\left\|\pd{v_i}{y}\right\|_{L_\infty(0,T;L_2(0,1))}\frac{\gamma_1^2}{4m_a}\gamma_1^2\left(\sum_{i=1}^{n_e}C_i\|\theta_i^{(n+1)}\|^2+
\sum_{i=1}^{n_e}C_i\|\rho_i^{(n+1)}\|^2\right)$$
$$+\frac{m_a}{\gamma_1^2}\left\|\pd{\theta_i^{(n+1)}}{y}\right\|^2 +\frac{(\alpha'_{\max}+\gamma_{\max}')^2}{2\gamma_0^2}\left\|\pd{\rho_i^{(n+1)}}{y}\right\|^2 +\frac12\|\theta_i^{(n+1)}\|^2.$$
Interpolation and numerical differentiation theories permit us to prove that
$$\left\|\left(\pd{v_i}{t}\right)^{(n+1)}-\bar\partial\tilde v_i^{(n+1)}\right\|^2\leq \left\|\left(\pd{v_i}{t}\right)^{(n+1)}-\bar\partial v_i^{(n+1)}\right\|^2+\left\|\bar\partial v_i^{(n+1)}-\bar\partial\tilde v_i^{(n+1)}\right\|^2$$
$$\leq C\delta^2\left\|\pd{{}^2v_i}{t^2}\right\|^2_{L_\infty(0,T;L_2(0,1))}+Ch^{2(k+1)}\left\|\pd{v_i}{t}\right\|^2_{L_\infty(0,T;H^{k+1}(0,1))}.$$
So,
$$\bar\partial\ \|\theta_i^{(n+1)}\|^2\leq (\frac{\gamma'_{\max}}{\gamma_0}+2)\|\theta_i^{(n+1)}\|^2 +C\delta^2\left\|\pd{{}^2v_i}{t^2}\right\|^2_{L_\infty(0,T;L_2(0,1))}$$
$$ +Ch^{2(k+1)}\left\|\pd{v_i}{t}\right\|^2_{L_\infty(0,T;H^{k+1}(0,1))} +C\left\|\pd{v_i}{y}\right\|_{L_\infty(0,T;L_2(0,1))}\frac{\gamma_1^4}{2m_a}\sum_{i=1}^{n_e}\|\theta_i^{(n+1)}\|^2 $$
$$+Ch^{2(k+1)}\left\|\pd{v_i}{y}\right\|_{L_\infty(0,T;L_2(0,1))}\frac{\gamma_1^4}{2m_a}\left\|v_i\right\|^2_{L_\infty(0,T;H^{k+1}(0,1))}$$ $$+Ch^{2(k+1)}\frac{(\alpha'_{\max}+\gamma_{\max}')^2}{2\gamma_0^2}\left\|\pd{v_i}{y}\right\|^2_{L_\infty(0,T;H^{k+1}(0,1))}.$$
Whence
$$\bar\partial\sum_{i=1}^{n_e} \|\theta_i^{(n+1)}\|^2\leq C_1\sum_{i=1}^{n_e} \|\theta_i^{(n+1)}\|^2+C_2(\delta^2+h^{2(k+1)}),$$
with $C_1=C_1(\gamma_0,\gamma_1,m_a,\gamma_{\max}',\left\|\pd{\mathbf{v}}{y}\right\|_{L_\infty(0,T;L_2(0,1))})$ and \\ $C_2=C_2(\gamma_0,\gamma_1,m_a,\gamma_{\max}',\alpha'_{\max},\left\|\pd{{}^2\mathbf{v}}{t^2}\right\|_{L_\infty(0,T;L_2(0,1))},
\left\|\pd{\mathbf{v}}{t}\right\|_{L_\infty(0,T;H^{k+1}(0,1))},$\\
$\left\|\pd{\mathbf{v}}{y}\right\|_{L_\infty(0,T;L_2(0,1))},\left\|\pd{\mathbf{v}}{y}\right\|_{L_\infty(0,T;H^{k+1}(0,1))},\left\|\mathbf{v}\right\|_{L_\infty(0,T;H^{k+1}(0,1))})$.\\
Hence
$$(1-\delta C_1)\sum_{i=1}^{n_e} \|\theta_i^{(n+1)}\|^2\leq \sum_{i=1}^{n_e} \|\theta_i^{(n)}\|^2+C_2\delta(\delta^2+h^{2(k+1)}).$$
If $\delta$ is sufficiently small, then, iterating, we obtain
$$\sum_{i=1}^{n_e} \|\theta_i^{(n+1)}\|^2\leq C_4\sum_{i=1}^{n_e} \|\theta_i^{(0)}\|^2+C_3\delta(\delta^2+h^{2(k+1)}).$$
The estimates of $\|\theta_i^{(0)}\|$ and $\|\rho_i^{(n)}\|$ complete the proof.
\end{proof}
Obtaining the solution of (\ref{eq_E_im}) implies using an iterative method
in each time step. We could apply Newton's method or some secant method, but we choose the fixed point method.
For the solution of equation (\ref{eq_E_im}), in each time step, we propose the following iterative scheme:
\begin{equation} \label{pf_E_im}
(M+\delta Aa_i(\mathbf{V}_{k}^{(n+1)})-\delta B)V_{i,k+1}^{(n+1)}=MV_i^{(n)}+\delta G_i, \quad i=1,\dots, n_e, \quad k=1,2,\dots
\end{equation}
with $\mathbf{V}_{0}^{(n+1)}=\mathbf{V}^{(n)}$ and iterating until $\|\mathbf{V}_{k+1}^{(n+1)}-\mathbf{V}_{k}^{(n+1)}\|\leq tol$.
Finally we only need to prove that this scheme converges, that is, for a prescribed $tol>0$ there exists a $K\in\mathbb{N}$ such that $\|\mathbf{V}_{k+1}^{(n+1)}-\mathbf{V}_{k}^{(n+1)}\|\leq tol$ for all $k\geq K$.
\begin{theorem}
If $\delta$ is sufficiently  small,  then the iterative scheme (\ref{pf_E_im}) converges.
\end{theorem}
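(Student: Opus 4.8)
The plan is to show that one fixed-point sweep of (\ref{pf_E_im}) is a contraction on $(S_h^k)^{n_e}$ in the $L_2$-norm, so that the increments $\mathbf{V}_{k+1}^{(n+1)}-\mathbf{V}_{k}^{(n+1)}$ decay geometrically; the prescribed tolerance then follows at once. Throughout, $n$ is fixed and I suppress the superscript $(n+1)$ on $b_1$, $b_2$, $\gamma$, $\gamma'$ and $g_i$. First I would write (\ref{pf_E_im}) in its weak form: for every $W_i\in S_h^k$,
$$\es{V_{i,k+1}^{(n+1)}}{W_i}+\delta b_2 a_i(\mathbf{V}_k^{(n+1)})\es{\pd{V_{i,k+1}^{(n+1)}}{y}}{\pd{W_i}{y}}-\delta\es{b_1\pd{V_{i,k+1}^{(n+1)}}{y}}{W_i}=\es{V_i^{(n)}}{W_i}+\delta\es{g_i}{W_i}.$$
Writing the same identity at index $k$ (with frozen coefficient $a_i(\mathbf{V}_{k-1}^{(n+1)})$), subtracting, and setting $Z_{i,k+1}=V_{i,k+1}^{(n+1)}-V_{i,k}^{(n+1)}$, I would add and subtract $a_i(\mathbf{V}_k^{(n+1)})\pd{V_{i,k}^{(n+1)}}{y}$ inside the diffusion term to split it into a part coercive in $Z_{i,k+1}$ and a remainder proportional to $a_i(\mathbf{V}_k^{(n+1)})-a_i(\mathbf{V}_{k-1}^{(n+1)})$.

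Next I would test with $W_i=Z_{i,k+1}$. The advection term is handled exactly as in Theorems \ref{conv_h} and \ref{conv_E_im}: integration by parts gives $\es{b_1\pd{Z_{i,k+1}}{y}}{Z_{i,k+1}}=-\tfrac{\gamma'}{2\gamma}\|Z_{i,k+1}\|^2\leq \tfrac{\gamma'_{\max}}{2\gamma_0}\|Z_{i,k+1}\|^2$, while by $(H5)$ and $b_2\geq\gamma_1^{-2}$ the coercive diffusion term is bounded below by $\tfrac{\delta m_a}{\gamma_1^2}\|\pd{Z_{i,k+1}}{y}\|^2$. For the remainder I would reuse the Lipschitz bound derived in Theorem \ref{conv_E_im}, $|a_i(\mathbf{V}_k^{(n+1)})-a_i(\mathbf{V}_{k-1}^{(n+1)})|\leq\gamma_1\sum_{j}C_j\|Z_{j,k}\|$, together with the inverse estimate $\|\pd{V_{i,k}^{(n+1)}}{y}\|\leq Ch^{-1}\|V_{i,k}^{(n+1)}\|$ already employed in Theorem \ref{uni_E_im} and an a priori bound $\|V_{i,k}^{(n+1)}\|\leq M_V$ uniform in $k$. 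This last bound I would obtain from the energy argument of Theorem \ref{est_E_im} applied to (\ref{pf_E_im}) with $W_i=V_{i,k+1}^{(n+1)}$ under the step-size condition (\ref{cond_d_1}), which bounds every iterate in terms of $\|V_i^{(n)}\|$ and $\|g_i\|$. A single Young inequality then absorbs $\|\pd{Z_{i,k+1}}{y}\|$ into the coercive term.

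Summing over $i=1,\dots,n_e$ and using Cauchy--Schwarz to pass from $(\sum_j\|Z_{j,k}\|)^2$ to $n_e\sum_j\|Z_{j,k}\|^2$, I would arrive at
$$\Big(1-\tfrac{\delta\gamma'_{\max}}{2\gamma_0}\Big)\sum_{i=1}^{n_e}\|Z_{i,k+1}\|^2\leq L\sum_{j=1}^{n_e}\|Z_{j,k}\|^2,$$
where the constant $L$ is proportional to $\delta h^{-2}$, the factor $h^{-2}$ arising from squaring the inverse estimate. Choosing $\delta\approx h^2$ sufficiently small — exactly the regime already required for uniqueness in Theorem \ref{uni_E_im} — keeps $(1-\delta\gamma'_{\max}/2\gamma_0)^{-1}$ close to $1$ and forces $L<1$, so the sweep is a contraction. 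Hence the increments decay geometrically, $\|\mathbf{V}_{k+1}^{(n+1)}-\mathbf{V}_{k}^{(n+1)}\|\leq L^{k/2}\|\mathbf{V}_{1}^{(n+1)}-\mathbf{V}_{0}^{(n+1)}\|\to 0$, which both yields $\|\mathbf{V}_{k+1}^{(n+1)}-\mathbf{V}_{k}^{(n+1)}\|\leq tol$ for all $k$ beyond some $K$ and shows the iterates form a Cauchy sequence converging to the unique solution of (\ref{eq_E_im}).

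The main obstacle I expect is the interplay between the inverse estimate and the step size. The frozen-coefficient remainder is the only term that is not automatically contractive, and controlling it produces the factor $\delta h^{-2}$; the whole argument therefore hinges on first establishing the uniform-in-$k$ bound $M_V$ for the iterates and then quantifying how small $\delta/h^2$ must be to drive $L$ strictly below $1$.
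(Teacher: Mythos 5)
Your argument is correct and shares the same skeleton as the paper's proof --- subtract two consecutive sweeps of (\ref{pf_E_im}), bound the difference of the frozen coefficients $a_i(\mathbf{V}_k^{(n+1)})-a_i(\mathbf{V}_{k-1}^{(n+1)})$ by Lipschitz continuity, and conclude geometric decay of the increments --- but you execute it in a genuinely different framework. The paper stays at the algebraic level: it invokes positive definiteness of $M$ and $A$, asserts a lower bound $\|M+\delta A a_i(\mathbf{V}_{k}^{(n+1)})-\delta B\|\geq C_1$ and an upper bound $\delta C\sum_{j}\|E_{j,k}\|$ for the right-hand side, and obtains the contraction factor $\delta C/C_1$, leaving implicit both the dependence of $C$ on the size of the iterates $\|V_{i,k}^{(n+1)}\|$ and the dependence of $C$, $C_1$ on $h$. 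You instead work with the weak form and energy estimates: coercivity of the frozen diffusion term, integration by parts for the advection term exactly as in Theorems \ref{conv_h} and \ref{conv_E_im}, the inverse estimate of Theorem \ref{uni_E_im}, and a uniform-in-$k$ a priori bound on the iterates obtained by repeating the stability argument of Theorem \ref{est_E_im} for the frozen-coefficient scheme under (\ref{cond_d_1}). This buys two things the paper's proof glosses over: the a priori bound $M_V$ is genuinely needed (the paper's constant $C$ silently depends on the iterates), and the smallness condition becomes quantitative, $L\propto\delta h^{-2}$, i.e. $\delta\approx h^2$, which matches the regime of Theorem \ref{uni_E_im} and is more informative than the paper's bare ``$\delta$ sufficiently small''. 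What the paper's route buys is brevity, though at a price of rigor: its inequality $\|M+\delta Aa_i(\mathbf{V}_{k}^{(n+1)})-\delta B\|\,\|E_{i,k+1}\|\leq \delta C\sum_{j}\|E_{j,k}\|$ should really involve a smallest-singular-value (invertibility) bound rather than the matrix norm, a point your coercivity estimate handles correctly.
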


\begin{proof}
The matrices $M$ and $A$ are positive definite, so, if $\delta$ is small, then system (\ref{pf_E_im}) has a unique solution for any $k=1,2,\dots$.
Subtracting the systems in two consecutive iterations, say $k$ and $k+1$, we obtain
$$(M+\delta Aa_i(\mathbf{V}_{k}^{(n+1)})-\delta B)(V_{i,k+1}^{(n+1)}-V_{i,k}^{(n+1)})$$
$$=\delta A\left(a_i(\mathbf{V}_{k}^{(n+1)})-a_i(\mathbf{V}_{k-1}^{(n+1)})\right)V_{i,k}^{(n+1)}, \quad i=1,\dots, n_e.$$
Taking  the norm on both sides of this equality, and defining $E_{i,k+1}= V_{i,k+1}^{(n+1)}-V_{i,k}^{(n+1)}$, we arrive at
$$\|M+\delta Aa_i(\mathbf{V}_{k}^{(n+1)})-\delta B\|\|E_{i,k+1}\|\leq \delta C\sum_{j=1}^{n_e}\|E_{j,k}\|.$$
For a small $\delta$, there exists a constant $C_1>0$ such that
$$\|M+\delta Aa_i(\mathbf{V}_{k}^{(n+1)})-\delta B\|\geq C_1,\quad i=1,\dots, n_e.$$
Summing up for $j=1,\dots,n_e$, the inequality becomes
$$\sum_{j=1}^{n_e}\|E_{i,k+1}\|\leq \frac{\delta C}{C_1}\sum_{j=1}^{n_e}\|E_{j,k}\|.$$
Iterating,
$$\sum_{j=1}^{n_e}\|E_{i,k+1}\|\leq \left(\frac{\delta C}{C_1}\right)^{k+1}\sum_{j=1}^{n_e}\|E_{j,0}\|.$$
If we choose the time step $\delta$ such that $\frac{\delta C}{C_1}< 1$ then, for any $tol>0$, there exists a $K$ such that for all $k>K$, $\|\mathbf{V}_{k+1}^{(n+1)}-\mathbf{V}_{k}^{(n+1)}\|\leq tol$.
\end{proof}

\subsection{Crank-Nicolson method}

The Crank-Nicolson method evaluates equation (\ref{probsd}) at the points $t_{n-1/2}=\frac{t_n+t_{n-1}}2$, $n=1,\dots, n_i$, and uses the approximations
\begin{equation*}
\mathbf{V}(y,t_{n-1/2})\approx\frac{\mathbf{V}^{(n)}(y)+\mathbf{V}^{(n-1)}(y)}{2}=\hat{\mathbf{V}}^{(n)}(y)
\end{equation*}%
and
\begin{equation*}
\frac{\partial \mathbf{V}}{\partial t}(y,t_{n-1/2})\approx \frac{\mathbf{V}^{(n)}(y)-\mathbf{V}
^{(n-1)}(y)}{\delta }=\overline{\partial }\mathbf{V}^{(n)}(y).
\end{equation*}
Then we have the problem of finding $\mathbf{V}^{(n)}\in (S_{h}^{k})^{{n_e}}$
such that it is zero on the boundary of $\Omega $, satisfies $
V_{i}^{(0)}=I_{h}(v_{i0})$, $i=1,\dots ,{n_e}$, and
\begin{equation*}
\int_{0}^{1}\overline{\partial }V_{i}^{(n)}W_{i}\ dy+a_{i}(l(\hat{V}
_{1}^{(n)}),\dots ,l(\hat{V}_{{n_e}}^{(n)}))b_{2}^{(n-1/2)}\int_{0}^{1}\frac{
\partial \hat{V}_{i}^{(n)}}{\partial y}\frac{\partial W_{i}}{\partial y}\ dy
\end{equation*}
\begin{equation}\label{eq_CN_im}
-\int_{0}^{1}b_{1}^{(n-1/2)}\frac{\partial \hat{V}_{i}^{(n)}}{\partial y}
W_{i}\ dy=\int_{0}^{1}g_{i}^{(n-1/2)}W_{i}\ dy.
\end{equation}
System (\ref{eq_CN_im}) is a non linear algebraic system due to the presence of
\linebreak $a_{i}(l(\hat{V}_{1}^{(n)}),\dots ,l(\hat{V}_{{n_e}}^{(n)}))$.

\begin{theorem}
For each $n=0,\dots, n_i-1$ and $i=1,\dots,n_e$, system (\ref{eq_CN_im}) has a solution.
\end{theorem}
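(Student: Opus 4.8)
The plan is to reproduce the existence argument used for the backward Euler scheme in Theorem~\ref{exi_E_im}, again closing the argument with the corollary to Brouwer's Fixed Point Theorem. The only structural difference is the half-step average $\hat{V}_i^{(n)}=\tfrac12\bigl(V_i^{(n)}+V_i^{(n-1)}\bigr)$, inside which the unknown $V_i^{(n)}$ is buried, so that the diffusion form no longer acts directly on the unknown. To restore the clean structure of the Euler step I would first change variables, taking $U_i:=\hat{V}_i^{(n)}$ as the new unknown; then $V_i^{(n)}=2U_i-V_i^{(n-1)}$ and $\bar\partial V_i^{(n)}=\tfrac{2}{\delta}\bigl(U_i-V_i^{(n-1)}\bigr)$, with $V_i^{(n-1)}$ fixed data from the previous step. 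Substituting into (\ref{eq_CN_im}) and multiplying by $\delta/2$ recasts the scheme as a problem for $\mathbf{U}=(U_1,\dots,U_{n_e})\in(S_h^k)^{n_e}$ whose diffusion term now acts on $U_i$ itself.

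Next I would define the continuous map $F:(S_h^k)^{n_e}\to(S_h^k)^{n_e}$ component-wise by
\begin{align*}
\es{F_i(\mathbf{U})}{W_i}&=\es{U_i-V_i^{(n-1)}}{W_i}\\
&\quad+\tfrac{\delta}{2}b_2^{(n-1/2)}a_i\bigl(l(U_1),\dots,l(U_{n_e})\bigr)\es{\pd{U_i}{y}}{\pd{W_i}{y}}\\
&\quad-\tfrac{\delta}{2}\es{b_1^{(n-1/2)}\pd{U_i}{y}}{W_i}-\tfrac{\delta}{2}\es{g_i^{(n-1/2)}}{W_i},
\end{align*}
so that $F(\mathbf{U})=0$ is exactly the transformed system; continuity in $\mathbf{U}$ follows from (H5), and as in Theorem~\ref{exi_E_im} this is all that Brouwer requires of the nonlinear coefficients. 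Testing with $W_i=U_i$ and summing over $i$, the \emph{one genuinely new point} pays off: each diffusion term becomes the nonnegative square $\tfrac{\delta}{2}b_2^{(n-1/2)}a_i\|\pd{U_i}{y}\|^2\ge 0$, whereas with the original unknown it would have produced an indefinite cross term. Integrating each advection term by parts exactly as in Theorem~\ref{exi_E_im} (the boundary terms vanishing since $U_i\in S_h^k$ vanishes on $\partial\Omega$, and $\pd{b_1}{y}=\gamma'/\gamma$) contributes $+\tfrac{\delta}{4}\tfrac{(\gamma')^{(n-1/2)}}{\gamma^{(n-1/2)}}\|U_i\|^2$, which is nonnegative since $\gamma$ is increasing under the standing assumptions. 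Hence the coercivity estimate reads
\begin{equation*}
\es{F(\mathbf{U})}{\mathbf{U}}\ge\sum_{i=1}^{n_e}\|U_i\|^2-\sum_{i=1}^{n_e}\Bigl(\|V_i^{(n-1)}\|+\tfrac{\delta}{2}\|g_i^{(n-1/2)}\|\Bigr)\|U_i\|,
\end{equation*}
whose leading quadratic coefficient equals $1>0$ uniformly in $\delta$, matching the fact that the statement imposes no condition on the time step.

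Finally I would choose $\varepsilon$ larger than $\bigl(\sum_{i}(\|V_i^{(n-1)}\|+\tfrac{\delta}{2}\|g_i^{(n-1/2)}\|)^2\bigr)^{1/2}$, so that $\es{F(\mathbf{U})}{\mathbf{U}}>0$ on the sphere $\|\mathbf{U}\|=\varepsilon$ of $(S_h^k)^{n_e}$ (with the product $L_2$ inner product), and invoke the corollary to Brouwer's Fixed Point Theorem to obtain a zero $\mathbf{U}$ of $F$ inside $B_\varepsilon$; undoing the substitution via $V_i^{(n)}=2U_i-V_i^{(n-1)}$ then yields a solution of (\ref{eq_CN_im}). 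I do not expect a serious obstacle: the coupling through the continuous, bounded functions $a_i$ is handled by continuity alone, and the only delicate point is the change of variables that converts the Crank--Nicolson average into a sign-definite diffusion square; once that is in place, the coercivity bound and the fixed-point step are a routine repetition of Theorem~\ref{exi_E_im}.
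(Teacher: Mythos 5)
Your proof is correct, and it reaches the conclusion by a cleaner route than the paper's. Both arguments rest on the same tool -- a coercivity bound on a sphere plus the corollary to Brouwer's fixed point theorem -- but the paper keeps $V_i^{(n)}$ itself as the unknown, so the Crank--Nicolson forms act on $\hat V_i^{(n)}=\tfrac12(V_i^{(n)}+V_i^{(n-1)})$ and produce cross terms such as $\tfrac{\delta}{2}b_2 a_i\es{\pd{V_i^{(n-1)}}{y}}{\pd{V_i^{(n)}}{y}}$, which the paper controls with inverse estimates in $S_h^k$; this is why factors $h^{-1}$ and $h/\delta$ appear in its admissible radius $\varepsilon$. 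Your change of unknown to $U_i=\hat V_i^{(n)}$ (a legitimate affine bijection of $S_h^k$, since $V_i^{(n-1)}$ is known data and $U_i$ still vanishes on $\partial\Omega$) eliminates these cross terms entirely: the diffusion form becomes the sign-definite square $\tfrac{\delta}{2}b_2 a_i\bigl\|\pd{U_i}{y}\bigr\|^2\ge 0$, the advection term integrates by parts to $+\tfrac{\delta}{4}\tfrac{(\gamma')^{(n-1/2)}}{\gamma^{(n-1/2)}}\|U_i\|^2$, and the coercivity constant is $1$, uniformly in both $h$ and $\delta$, with no inverse estimates. Note that the nonnegativity of the advection contribution uses $\gamma'\ge 0$, i.e.\ the standing assumption (inherited from the framework of \cite{RACF14}) that $\gamma$ is increasing -- the same sign information the paper's own Theorem~\ref{exi_E_im} leans on, so nothing extra is assumed. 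A further small improvement on your side is that you pose the fixed-point map on the product space $(S_h^k)^{n_e}$ and test with the full vector $\mathbf{U}$; this treats the coupling through $a_i(l(U_1),\dots,l(U_{n_e}))$ honestly, whereas the paper's componentwise map $F:S_h^k\to S_h^k$ is slightly loose on this point, since $a_i$ depends on all components simultaneously. What the paper's version buys is only directness -- no change of variables -- at the price of a messier, mesh-dependent radius; your version buys a bound independent of $h$ and $\delta$ and a tidier treatment of the system structure.
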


\begin{proof}
The proof is similar to that of Theorem \ref{exi_E_im}.
Let $n\geq0$ be fixed. For each $h,\delta>0$, we define the continuous mapping $F:S_{h}^{k}\to S_{h}^{k}$ by
$$(F(V),W)=(V,W)-(V_0,W)+\frac{\delta}{2}b_2^{(n-\frac12)}a_i^{(n-\frac12)}(V,V_0)\left(\pd{V}{y},\pd{W}{y}\right)$$
$$ +\frac{\delta}{2}b_2^{(n-\frac12)}a_i^{(n-\frac12)}(V,V_0)\left(\pd{V_0}{y},\pd{W}{y}\right) -\frac{\delta}{2}\left(b_1\pd{V}{y},\pd{W}{y}\right) -\frac{\delta}{2}\left(b_1\pd{V_0}{y},\pd{W}{y}\right)-\delta(g,W).$$
If $W=V$ then
$$(F(V),V)=(V,V)-(V_0,V)+\frac{\delta}{2}b_2^{(n-\frac12)}a_i^{(n-\frac12)}(V,V_0)\left(\pd{V}{y},\pd{V}{y}\right) $$
$$+\frac{\delta}{2}b_2^{(n-\frac12)}a_i^{(n-\frac12)}(V,V_0)\left(\pd{V_0}{y},\pd{V}{y}\right)-\frac{\delta}{2}\left(b_1\pd{V}{y},\pd{V}{y}\right) -\frac{\delta}{2}\left(b_1\pd{V_0}{y},\pd{V}{y}\right)-\delta(g,V).$$
Thus
$$(F(V),V)\geq\|V\|^2-\|V_0\|\|V\|+\frac{\delta m_a C}{2\gamma_1^2}\|V\|^2+\frac{\delta m_a C}{2\gamma_1^2}\left(\pd{V_0}{y},\pd{V}{y}\right)+\frac{\delta\gamma'_{\max}}{4\gamma_0}\|V\|^2$$
$$-\frac{\delta}{2}\left\|b_1\pd{V_0}{y}\right\|\left\|\pd{V}{y}\right\| -\delta\|g\|\|V\|$$
$$\phantom{(F(V),V)}\geq\|V\|^2-\|V_0\|\|V\|+\frac{\delta m_a C}{2\gamma_1^2}\|V\|^2-\frac{\delta m_a C}{2\gamma_1^2}\left\|\pd{V_0}{y}\right\|\left\|\pd{V}{y}\right\|+\frac{\delta\gamma'_{\max}}{4\gamma_0}\|V\|^2$$
$$-\frac{\delta}{2}\left\|b_1\pd{V_0}{y}\right\|\left\|\pd{V}{y}\right\| -\delta\|g\|\|V\|$$
$$\phantom{(F(V),V)}\geq\|V\|^2-\|V_0\|\|V\|+\frac{\delta m_a C}{2\gamma_1^2}\|V\|^2-\frac{\delta h^{-1} m_a C}{2\gamma_1^2}\left\|\pd{V_0}{y}\right\|\left\|V\right\|+\frac{\delta\gamma'_{\max}}{4\gamma_0}\|V\|^2$$
$$-\frac{\delta h^{-1}}{2}\left\|b_1\pd{V_0}{y}\right\|\left\|V\right\| -\delta\|g\|\|V\|$$
$$\phantom{(F(V),V)}\geq\|V\|(\|V\|-\|V_0\|+\frac{\delta m_a C}{2\gamma_1^2}\|V\|-\frac{\delta h^{-1} m_a C}{2\gamma_1^2}\left\|\pd{V_0}{y}\right\|+\frac{\delta\gamma'_{\max}}{4\gamma_0}\|V\|$$
$$-\frac{\delta h^{-1}}{2}\left\|b_1\pd{V_0}{y}\right\|-\delta\|g\|).$$
$$\left(1+\delta\left(\frac{m_a C}{2\gamma_1^2}+\frac{\delta\gamma'_{\max}}{4\gamma_0}\right)\right)\|V\|\geq \|V_0\|+\delta h^{-1}\left(\frac{m_a C}{2\gamma_1^2}\left\|\pd{V_0}{y}\right\|+\frac12\left\|b_1\pd{V_0}{y}\right\|\right)$$
$$+\delta\|g\|$$
$$\Leftrightarrow\|V\|\geq \frac{4\gamma_1^2\gamma_0}{4\gamma_1^2\gamma_0+2\delta m_aC\gamma_0+\delta \gamma_1^2\gamma'_{\max}}\left(\frac{m_a C}{2\gamma_1^2}\left\|\pd{V_0}{y}\right\|+\frac12\left\|b_1\pd{V_0}{y}\right\|\right.$$
$$\left.+\frac{h}{\delta}\|V_0\|+h\|g\|\right)$$

Let us define $$\varepsilon>\frac{4\gamma_1^2\gamma_0}{4\gamma_1^2\gamma_0+2\delta m_aC\gamma_0+\delta \gamma_1^2\gamma'_{\max}}\left(\frac{m_a C}{2\gamma_1^2}\left\|\pd{V_0}{y}\right\|+\frac12\left\|b_1\pd{V_0}{y}\right\|+\frac{h}{\delta}\|V_0\|+h\|g\|\right)$$
and
$$B_{\varepsilon}=\{W\in S_{h}^{k}: \|W\|\leq \varepsilon\}.$$

Since $(F(V),V)>0$, for every $V\in\partial B_{\varepsilon}$, the corollary to Brower's Fixed Point Theorem implies the existence of a solution to Problem (\ref{eq_CN_im}).
\end{proof}
The stability is proved in the next theorem.
\begin{theorem}\label{est_CN_im}
Suppose that $\delta$ satisfies
\begin{equation}\label{cond_d_2}
\delta\leq \frac{4\gamma_0}{\gamma'_{\max}+\gamma_0}.
\end{equation}
If $\mathbf{V}^{(n)}(y)$ is the solution of (\ref{eq_CN_im}), then
$$\left\|V_i^{(n)}\right\|^2\leq C_1^{n}\left\|V_i^{(0)}\right\|^2 +\sum_{l=0}^{n}C_3^{n-l+2}\delta\left\|g_i^{(l-\frac12)}\right\|^2,$$
where $C_1$, $C_3$ could depend on  $\gamma'_{\max}$, $\gamma_0$ and $\delta$.
\end{theorem}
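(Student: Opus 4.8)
The plan is to mimic the backward Euler stability proof (Theorem \ref{est_E_im}), since the Crank-Nicolson scheme has the same structure but evaluated at the midpoint $t_{n-1/2}$ with the averaged state $\hat{\mathbf{V}}^{(n)}$. First I would set the test function $W_i=\hat V_i^{(n)}=\tfrac12(V_i^{(n)}+V_i^{(n-1)})$ in the discrete equation (\ref{eq_CN_im}). The crucial algebraic observation is that
\begin{equation*}
\es{\bar\partial V_i^{(n)}}{\hat V_i^{(n)}}=\frac{1}{\delta}\es{V_i^{(n)}-V_i^{(n-1)}}{\tfrac12(V_i^{(n)}+V_i^{(n-1)})}=\frac{1}{2\delta}\left(\|V_i^{(n)}\|^2-\|V_i^{(n-1)}\|^2\right),
\end{equation*}
which telescopes the time-derivative term cleanly. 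The diffusion term is nonnegative because $a_i^{(n-1/2)}\ge m_a>0$ and $b_2^{(n-1/2)}>0$, so it can be discarded to obtain an inequality. This is exactly the midpoint analogue of the step in Theorem \ref{est_E_im}, and it is the reason Crank-Nicolson retains unconditional-looking structure modulo the advection term.

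Next I would handle the first-order term $\es{b_1^{(n-1/2)}\partial_y\hat V_i^{(n)}}{\hat V_i^{(n)}}$. Integrating by parts and using $(H1)$–$(H2)$ exactly as in the semidiscrete proof, this contributes $-\frac{(\gamma')^{(n-1/2)}}{2\gamma^{(n-1/2)}}\|\hat V_i^{(n)}\|^2$, whose absolute value is bounded by $\frac{\gamma'_{\max}}{2\gamma_0}\|\hat V_i^{(n)}\|^2$. The source term is estimated by Cauchy's inequality: $|\es{g_i^{(n-1/2)}}{\hat V_i^{(n)}}|\le\frac12\|g_i^{(n-1/2)}\|^2+\frac12\|\hat V_i^{(n)}\|^2$. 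Using the elementary bound $\|\hat V_i^{(n)}\|^2\le\frac12(\|V_i^{(n)}\|^2+\|V_i^{(n-1)}\|^2)$ converts all the averaged norms into the two endpoint norms. Collecting terms and multiplying through by $2\delta$ yields an inequality of the form
\begin{equation*}
\|V_i^{(n)}\|^2\le\|V_i^{(n-1)}\|^2+\delta\left(\frac{\gamma'_{\max}}{\gamma_0}+1\right)\frac12\left(\|V_i^{(n)}\|^2+\|V_i^{(n-1)}\|^2\right)+\delta\|g_i^{(n-1/2)}\|^2.
\end{equation*}

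The main obstacle is rearranging this into a usable recurrence: the term involving $\|V_i^{(n)}\|^2$ appears on both sides, so I must move it left and divide by the coefficient $\bigl(1-\frac{\delta}{2}(\frac{\gamma'_{\max}}{\gamma_0}+1)\bigr)$. This is precisely where condition (\ref{cond_d_2}), $\delta\le\frac{4\gamma_0}{\gamma'_{\max}+\gamma_0}$, enters: it guarantees that $\frac{\delta}{2}\cdot\frac{\gamma'_{\max}+\gamma_0}{\gamma_0}\le 2$, keeping the denominator positive and bounded away from zero so that the resulting amplification factor $C_1=\bigl(1+\frac{\delta}{2}(\frac{\gamma'_{\max}}{\gamma_0}+1)\bigr)/\bigl(1-\frac{\delta}{2}(\frac{\gamma'_{\max}}{\gamma_0}+1)\bigr)$ is finite and positive. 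I would verify that the chosen threshold makes this factor well-defined with the right sign (the factor $4$ rather than $1$, compared with (\ref{cond_d_1}), reflects the $\frac12$ gained from the Crank-Nicolson averaging). One then gets a one-step recurrence $\|V_i^{(n)}\|^2\le C_1\|V_i^{(n-1)}\|^2+C_3\delta\|g_i^{(n-1/2)}\|^2$, and iterating from $n$ down to $0$—exactly as in Theorem \ref{est_E_im}—produces the claimed geometric-sum bound with constants $C_1,C_3$ depending on $\gamma'_{\max},\gamma_0,\delta$. The only care needed in the iteration is tracking the powers of $C_1$ on the accumulated source contributions, giving the exponent $n-l+2$ as stated.
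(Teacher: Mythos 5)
Your overall strategy is exactly the paper's: test with $\hat V_i^{(n)}$, use the telescoping identity $(\bar\partial V_i^{(n)},\hat V_i^{(n)})=\frac{1}{2\delta}\bigl(\|V_i^{(n)}\|^2-\|V_i^{(n-1)}\|^2\bigr)$, discard the nonnegative diffusion term, integrate the advection term by parts, apply Cauchy's inequality to the source, convert $\|\hat V_i^{(n)}\|^2$ into endpoint norms, and iterate the resulting one-step recurrence. Moreover, up to the recurrence
\begin{equation*}
\Bigl(1-\tfrac{\delta}{2}\bigl(\tfrac{\gamma'_{\max}}{\gamma_0}+1\bigr)\Bigr)\|V_i^{(n)}\|^2
\le\Bigl(1+\tfrac{\delta}{2}\bigl(\tfrac{\gamma'_{\max}}{\gamma_0}+1\bigr)\Bigr)\|V_i^{(n-1)}\|^2
+\delta\|g_i^{(n-1/2)}\|^2
\end{equation*}
your constants are the sharp ones: the correct bound is $\|\hat V_i^{(n)}\|^2\le\tfrac12(\|V_i^{(n)}\|^2+\|V_i^{(n-1)}\|^2)$, as you use, whereas the paper's corresponding display carries coefficients $\tfrac{\gamma'_{\max}}{8\gamma_0}$ and $\tfrac18$, which would require $\|\hat V_i^{(n)}\|^2\le\tfrac14(\|V_i^{(n)}\|^2+\|V_i^{(n-1)}\|^2)$ --- false already when $V_i^{(n)}=V_i^{(n-1)}$.

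There is, however, a genuine gap at the decisive step. You assert that condition (\ref{cond_d_2}), $\delta\le\frac{4\gamma_0}{\gamma'_{\max}+\gamma_0}$, ``guarantees that $\frac{\delta}{2}\cdot\frac{\gamma'_{\max}+\gamma_0}{\gamma_0}\le 2$, keeping the denominator positive.'' This is a non sequitur: knowing only that this quantity is $\le 2$ means $1-\frac{\delta}{2}\bigl(\frac{\gamma'_{\max}}{\gamma_0}+1\bigr)$ can lie anywhere in $[-1,1]$; in particular it may be zero or negative, in which case you cannot divide and obtain a positive amplification factor $C_1$. For instance, at $\delta=\frac{4\gamma_0}{\gamma'_{\max}+\gamma_0}$ your left-hand coefficient equals $-1$ and the recurrence is vacuous. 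With your (sharp) constants the argument actually requires the stronger restriction $\delta<\frac{2\gamma_0}{\gamma'_{\max}+\gamma_0}$, with $\delta$ bounded away from this threshold so that $C_1$, $C_3$ stay under control. The factor $4$ in (\ref{cond_d_2}) is tied to the paper's weaker (indeed erroneous) factor-$\tfrac14$ bookkeeping, and even there the borderline case makes the left coefficient vanish. So as written, the step ``(\ref{cond_d_2}) $\Rightarrow$ denominator positive'' fails; you must either tighten the time-step condition you invoke or rework the absorption of $\|V_i^{(n)}\|^2$ so that the stated threshold suffices.
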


\begin{proof}
Putting $W_i=\hat V_i^{(n)}$ in (\ref{eq_CN_im}), we obtain
$$(\bar\partial V_i^{(n)},\hat V_i^{(n)})+a_{i}(l(\hat{V}
_{1}^{(n)}),\dots ,l(\hat{V}_{{n_e}}^{(n)}))b_{2}^{(n-1/2)}\es{\frac{
\partial \hat{V}_{i}^{(n)}}{\partial y}}{\frac{\partial \hat V_i^{(n)}}{\partial y}}$$
$$-\es{b_{1}^{(n-1/2)}\frac{\partial \hat{V}_{i}^{(n)}}{\partial y}}{
\hat V_i^{(n)}}=\es{g_{i}^{(n-1/2)}}{\hat V_i^{(n)}}.$$
Since the second term on the left-hand side is non-negative, applying Green's Theorem to the first term on the right-hand side, we obtain
$$\frac12\bar\partial \|V_i^{(n)}\|^2\leq \frac{(\gamma')^{(n-\frac12)}}{2\gamma^{(n-\frac12)}}\|\hat V_i^{(n)}\|^2+(g^{(n-\frac12)},\hat V_i^{(n)})$$
$$\leq \frac{\gamma'_{\max}}{8\gamma_0}\left(\|V_i^{(n)}\|^2+\|V_i^{(n-1)}\|^2\right)+\frac12\|g^{(n-\frac12)}\|^2+
\frac18\left(\| V_i^{(n)}\|^2+\| V_i^{(n-1)}\|^2\right).$$
So,
$$\|V_i^{(n)}\|^2\leq \|V_i^{(n-1)}\|^2+\frac{\delta\gamma'_{\max}}{4\gamma_0}\|V_i^{(n)}\|^2+
\frac{\delta\gamma'_{\max}}{4\gamma_0}\|V_i^{(n-1)}\|^2+\delta\|g^{(n-\frac12)}\|^2+
\frac{\delta}{4}\| V_i^{(n)}\|^2$$
$$+\frac{\delta}{4}\| V_i^{(n-1)}\|^2.$$
Collecting the terms, the last inequality becomes
$$\left(1-\frac{\delta\gamma'_{\max}}{4\gamma_0}-\frac{\delta}{4}\right)\|V_i^{(n)}\|^2\leq \left(1+\frac{\delta\gamma'_{\max}}{4\gamma_0}+\frac{\delta}{4}\right)\|V_i^{(n-1)}\|^2+\delta\|g^{(n-\frac12)}\|^2.$$
If delta satisfies (\ref{cond_d_2}), then
$$\|V_i^{(n)}\|^2\leq C_1\|V_i^{(n-1)}\|^2+C_2\delta\|g^{(n-\frac12)}\|^2.$$
Iterating, we obtain the desired estimate.
\end{proof}
We note that condition (\ref{cond_d_2}) permits larger step sizes in time than condition (\ref{cond_d_1}).

\begin{theorem}
If $\delta\approx h^2$ is sufficiently small, then the solution of (\ref{eq_CN_im}) is unique.
\end{theorem}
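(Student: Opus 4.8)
The plan is to mirror the uniqueness argument for the backward Euler scheme (Theorem \ref{uni_E_im}), now exploiting the stability estimate of Theorem \ref{est_CN_im} together with an inverse estimate in $S_h^k$. Suppose (\ref{eq_CN_im}) admits two solutions $\mathbf{X}$ and $\mathbf{Y}$ at the level $n$, both built on the same, already known, previous iterate $\mathbf{V}^{(n-1)}$. Writing the scheme for each and subtracting, I would set $E_i=X_i-Y_i$. The crucial observation is that $\hat X_i-\hat Y_i=\tfrac12 E_i$, because $V_i^{(n-1)}$ cancels; hence every Crank--Nicolson spatial term inherits a factor $\tfrac12$, while the discrete time derivative contributes $\bar\partial X_i-\bar\partial Y_i=E_i/\delta$. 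After multiplying the difference equation by $\delta$ and splitting the diffusion term, I obtain, with $a_{i,X}=a_i(l(\hat X_1),\dots,l(\hat X_{n_e}))$ and $a_{i,Y}=a_i(l(\hat Y_1),\dots,l(\hat Y_{n_e}))$, a relation of the form
$$\es{E_i}{W_i}+\frac{\delta}{2}b_2^{(n-1/2)}a_{i,X}\es{\pd{E_i}{y}}{\pd{W_i}{y}}-\frac{\delta}{2}\es{b_1^{(n-1/2)}\pd{E_i}{y}}{W_i}=-\delta b_2^{(n-1/2)}(a_{i,X}-a_{i,Y})\es{\pd{\hat Y_i}{y}}{\pd{W_i}{y}}.$$

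Next I would take $W_i=E_i$. The coercivity of $a$ and the lower bound on $b_2$ give a good term $\tfrac{\delta m_a}{2\gamma_1^2}\|\partial E_i/\partial y\|^2$; the advection term is handled exactly as before, integrating by parts so that $\es{b_1^{(n-1/2)}\pd{E_i}{y}}{E_i}=-\tfrac{(\gamma')^{(n-1/2)}}{2\gamma^{(n-1/2)}}\|E_i\|^2$, whose contribution is bounded in absolute value by $\tfrac{\delta\gamma'_{\max}}{4\gamma_0}\|E_i\|^2$. For the nonlinear right-hand side I would invoke (H5) to write $|a_{i,X}-a_{i,Y}|\le\gamma_1\sum_{j}C_j\|\hat X_j-\hat Y_j\|=\tfrac{\gamma_1}{2}\sum_j C_j\|E_j\|$ and then apply the Cauchy--Young inequality, absorbing the resulting $\|\partial E_i/\partial y\|^2$ contribution into the coercive term.

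The decisive step is the same as in Theorem \ref{uni_E_im}: after summing over $i$ there remains a factor $\|\partial \hat Y_i/\partial y\|_{\infty}$ multiplying $\sum_j\|E_j\|^2$, which I would control with the inverse estimate valid in $S_h^k$,
$$\left\|\pd{\hat Y_i}{y}\right\|_{\infty}\le Ch^{-1}\left\|\pd{\hat Y_i}{y}\right\|\le Ch^{-2}\left\|\hat Y_i\right\|,$$
the last factor being bounded through the stability estimate of Theorem \ref{est_CN_im}. Collecting everything yields
$$\left(1-\frac{\delta\gamma'_{\max}}{4\gamma_0}-C\delta h^{-2}\right)\sum_{j=1}^{n_e}\|E_j\|^2\le 0.$$

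The main obstacle --- and the reason the hypothesis $\delta\approx h^2$ appears --- is precisely this $\delta h^{-2}$ factor produced by the inverse estimate: only when $\delta$ is comparable to $h^2$ does $\delta h^{-2}$ stay bounded, and taking $\delta$ sufficiently small then forces the bracket to be strictly positive. This gives $\sum_j\|E_j\|^2\le 0$, hence $E_j=0$ for every $j$ and $\mathbf{X}=\mathbf{Y}$, establishing uniqueness. I expect the only genuinely delicate bookkeeping to be the appearance of the factor $\tfrac12$ in the Crank--Nicolson spatial terms and keeping track of the stability-controlled norm of $\hat Y_i$ rather than of $X_i$ alone; the remaining estimates are routine and parallel to those already carried out for the Euler scheme.
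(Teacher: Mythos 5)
Your proposal is correct and follows essentially the same route as the paper's own proof: subtract the two schemes built on the common iterate $\mathbf{V}^{(n-1)}$, test with $E_i$, integrate the advection term by parts, use (H5) to handle the difference of diffusion coefficients against $\pd{\hat Y_i}{y}$ (the paper writes this as $\frac12\pd{(V_i^{(n-1)}+Y_i)}{y}$, which is the same quantity), and close the argument with the inverse estimate $\|\cdot\|_\infty\leq Ch^{-2}\|\cdot\|$ combined with the stability bound of Theorem \ref{est_CN_im}. The only differences are cosmetic bookkeeping of the factor $\tfrac12$, which you track explicitly and the paper absorbs into its constants.
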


\begin{proof}
For a fixed $n$, suppose that $\mathbf{V}^{(n-1)}$ is known and that system (\ref{eq_CN_im}) has two different solutions, $\mathbf{X}$ and $\mathbf{Y}$. Subtracting both equations, we obtain
$$(X_i-Y_i,W_i)+\frac{\delta}{2}b_2^{(n-\frac12)}a_i^{(n-\frac12)}\left(\frac{\mathbf{X}+\mathbf{V}^{(n-1)}}{2}\right)
\left(\pd{X_i}{y},\pd{W_i}{y}\right)$$
$$-\frac{\delta}{2}b_2^{(n-\frac12)}a_i^{(n-\frac12)}\left(\frac{\mathbf{X}+\mathbf{V}^{(n-1)}}{2}\right)
\left(\pd{X_i}{y},\pd{W_i}{y}\right)-\frac{\delta}{2}\left(b_1^{(n-\frac12)}\pd{(X_i-Y_i)}{y},W_i\right)$$
$$+\frac{\delta}{2}b_2^{(n-\frac12)}\left(a_i^{(n-\frac12)}\left(\frac{\mathbf{X}+\mathbf{V}^{(n-1)}}{2}\right)
-a_i^{(n-\frac12)}\left(\frac{\mathbf{Y}+\mathbf{V}^{(n-1)}}{2}\right)\right)$$
$$\times\left(\pd{V_i^{(n-1)}}{y},\pd{W_i}{y}\right)=0.$$
Defining $\mathbf{E}=\mathbf{X}-\mathbf{Y}$, we can prove that
$$(E_i,W_i)+\frac{\delta}{2}b_2^{(n-\frac12)}a_i^{(n-\frac12)}\left(\frac{\mathbf{X}+\mathbf{V}^{(n-1)}}{2}\right)
\left(\pd{E_i}{y},\pd{W_i}{y}\right)-\frac{\delta}{2}\left(b_1^{(n-\frac12)}\pd{(E_i)}{y},W_i\right)$$
$$+\frac{\delta}{2} b_2^{(n-\frac12)}\left(a_i^{(n-\frac12)}\left(\frac{\mathbf{X}+\mathbf{V}^{(n-1)}}{2}\right)
-a_i^{(n-\frac12)}\left(\frac{\mathbf{Y}+\mathbf{V}^{(n-1)}}{2}\right)\right)$$
$$\times\left(\pd{(V_i^{(n-1)}+Y_i)}{y},\pd{W_i}{y}\right)=0.$$
Setting $W_i=E_i$ and applying Green's Theorem, we arrive at
$$\|E_i\|^2+\frac{\delta}{2}b_2^{(n-\frac12)}a_i^{(n-\frac12)}\left(\frac{\mathbf{X}+\mathbf{V}^{(n-1)}}{2}\right)
\left\|\pd{E_i}{y}\right\|^2$$
$$=\frac{\delta(\gamma')^{(n-\frac12)}}{4\gamma^{(n-\frac12)}}\|E_i\|^2+
\frac{\delta}{2} b_2^{(n-\frac12)}\left(a_i^{(n-\frac12)}\left(\frac{\mathbf{X}+\mathbf{V}^{(n-1)}}{2}\right)
\right.$$
$$\left.-a_i^{(n-\frac12)}\left(\frac{\mathbf{Y}+\mathbf{V}^{(n-1)}}{2}\right)\right)\left(\pd{(V_i^{(n-1)}+Y_i)}{y},\pd{E_i}{y}\right).$$
Then
$$\|E_i\|^2+\frac{\delta m_a}{2\gamma_1^2}\left\|\pd{E_i}{y}\right\|^2
=\frac{\delta \gamma'_{\max}}{4\gamma_0}\|E_i\|^2+
\frac{\delta\gamma_1^2C}{8m_a} \left\|\pd{(V_i^{(n-1)}+Y_i)}{y}\right\|_\infty\sum_{j=1}^{n_e}\|E_j\|^2$$
$$+\frac{\delta m_a}{2\gamma_1^2}\left\|\pd{E_i}{y}\right\|^2,$$
and so
$$\left(1-\frac{\delta \gamma'_{\max}}{4\gamma_0}-\frac{\delta\gamma_1^2C}{8m_a} \left\|\pd{(V_i^{(n-1)}+Y_i)}{y}\right\|_\infty\right)\sum_{j=1}^{n_e}\|E_j\|^2\leq 0.$$
As before, we have
$$\left\|\pd{(V_i^{(n-1)}+Y_i)}{y}\right\|_\infty\leq Ch^{-2}(\|V_i^{(n-1)}\|+\|Y_i\|).$$
By Theorem \ref{est_CN_im}, the result is proved, provided that  $\delta\approx h^2$ is sufficiently small.
\end{proof}

\begin{theorem}\label{conv_CN_im}
If $v$ is a solution of equation (\ref{prob1}) and $V_n$ is a solution of (\ref{eq_CN_im}), then
\begin{equation*}
\|V_i^{(n)}(y)-v_i(y,t_n)\| \leq C(h^{k+1}+\delta^2),\quad n=1,\dots,n_t, \quad i=1,\dots,n_e,
\end{equation*}
for a certain $\delta$ and $C=C\left(M_a,m_a,\gamma_0,\gamma'_{\max},\alpha'_{\max},\left\|\pd vy\right\|_{L_{\infty}(0,T,L_2(\Omega))},\right.$\\
$\left.\|v\|_{L_{\infty}(0,T,H^{k+1}(\Omega))},\left\|\pd vt\right\|_{L_{\infty}(0,T,L_2(\Omega))},\left\|\pd{^2 v}{t^2}\right\|_{L_{\infty}(0,T,L_2(\Omega))},\left\|\pd{^3 v}{t^3}\right\|_{L_{\infty}(0,T,L_2(\Omega))},\right.$\\
$\left.\left\|\pd{^3 v}{y\partial t^2}\right\|_{L_{\infty}(0,T,L_2(\Omega))}\right)$ which doesn't depend on $h$, $k$ and $\delta$.
\end{theorem}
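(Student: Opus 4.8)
The plan is to reproduce the error-splitting argument of Theorems \ref{conv_h} and \ref{conv_E_im}, but arranged around the midpoint $t_{n-1/2}$ so that the temporal consistency error comes out as $O(\delta^2)$ instead of $O(\delta)$. First I would split $V_i^{(n)}-v_i^{(n)}=\theta_i^{(n)}+\rho_i^{(n)}$, with $\theta_i^{(n)}=V_i^{(n)}-\tilde v_i^{(n)}$ and $\rho_i^{(n)}=\tilde v_i^{(n)}-v_i^{(n)}$, where $\tilde v_i$ denotes the Ritz projection of $v_i$. Lemma \ref{errproj} at once yields $\|\rho_i^{(n)}\|\leq Ch^{k+1}\|v_i\|_{H^{k+1}}$, so it remains only to control $\theta_i^{(n)}$.

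To obtain an equation for $\theta_i^{(n)}$ I would subtract the scheme (\ref{eq_CN_im}) from the weak formulation (\ref{fracav}) evaluated at $t_{n-1/2}$, using that the Ritz projection is exact in the diffusion form. The natural test function is $W_i=\hat\theta_i^{(n)}=(\theta_i^{(n)}+\theta_i^{(n-1)})/2$, exactly as in the stability proof (Theorem \ref{est_CN_im}), since then the time term telescopes, $\es{\bar\partial\theta_i^{(n)}}{\hat\theta_i^{(n)}}=\frac12\bar\partial\|\theta_i^{(n)}\|^2$, and the diffusion term, carrying the numerical coefficient $a_i(l(\hat V^{(n)}))b_2^{(n-1/2)}\geq m_a/\gamma_1^2$, is nonnegative and may be kept on the left.

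The right-hand side splits into four groups, each bounded as before. The advection term $\es{b_1^{(n-1/2)}\pd{\hat\theta_i^{(n)}}{y}}{\hat\theta_i^{(n)}}$ is integrated by parts to give $-\frac{(\gamma')^{(n-1/2)}}{2\gamma^{(n-1/2)}}\|\hat\theta_i^{(n)}\|^2$, controlled by $\gamma'_{\max}/\gamma_0$. The nonlocal-coefficient difference is handled through hypothesis (H5): writing the part $a_i(l(\hat V^{(n)}))-a_i(l(\hat v^{(n)}))$ and using Lipschitz continuity together with $\gamma\leq\gamma_1$, it is bounded by a sum of $\|\hat\theta_j^{(n)}\|$ and $\|\hat\rho_j^{(n)}\|$, just as in Theorem \ref{conv_E_im}. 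The Ritz contributions, involving $\rho_i$ and $\pd{\rho_i}{y}$, are dominated by $Ch^{k+1}$ via Lemma \ref{errproj} and Young's inequality, with the gradient factors absorbed into the diffusion term.

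The decisive step, and the one I expect to be the main obstacle, is the consistency error, which must be $O(\delta^2)$ rather than $O(\delta)$. Inserting the exact solution and Taylor-expanding about $t_{n-1/2}$, one shows that $\bar\partial v_i^{(n)}-\pd{v_i}{t}(t_{n-1/2})$ and $\hat v_i^{(n)}-v_i(t_{n-1/2})$ are both $O(\delta^2)$; the centered-difference remainder produces the $\|\pd{^3 v_i}{t^3}\|$ term, while the averaging remainders in the diffusion and advection terms, and in the argument of $a_i$ (the residual part $a_i(l(\hat v^{(n)}))-a_i(l(v^{(n-1/2)}))$), produce the $\|\pd{^2 v_i}{t^2}\|$ and mixed $\|\pd{^3 v_i}{y\partial t^2}\|$ terms listed in the constant $C$. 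Collecting all estimates and applying Young's inequality yields a recursion of the form $(1-\delta C_1)\sum_i\|\theta_i^{(n)}\|^2\leq(1+\delta C_1)\sum_i\|\theta_i^{(n-1)}\|^2+C_2\delta(\delta^2+h^{k+1})^2$. For $\delta$ small enough that $1-\delta C_1>0$, a discrete Gronwall argument (iterating as in Theorems \ref{est_E_im} and \ref{conv_E_im}) gives $\sum_i\|\theta_i^{(n)}\|^2\leq C(\delta^2+h^{k+1})^2$, after using $\|\theta_i^{(0)}\|\leq Ch^{k+1}$ (since $V_i^{(0)}=I_hv_{i0}$). Adding the bound on $\rho_i^{(n)}$ then delivers the stated estimate.
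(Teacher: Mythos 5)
Your proposal is correct and follows essentially the same route as the paper's own proof: the same Ritz-projection splitting $\theta_i^{(n)}+\rho_i^{(n)}$, the same test function $\hat\theta_i^{(n)}$ with the telescoping time term and nonnegative diffusion term, integration by parts of the advection term, the Lipschitz bound (H5) on the nonlocal coefficient difference, midpoint Taylor expansions yielding the $O(\delta^2)$ consistency errors with exactly the norms $\|\partial^2 v/\partial t^2\|$, $\|\partial^3 v/\partial t^3\|$, $\|\partial^3 v/\partial y\partial t^2\|$, and the final recursion closed by a smallness condition on $\delta$ and iteration from $\|\theta_i^{(0)}\|\leq Ch^{k+1}$. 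No substantive difference from the paper's argument.
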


\begin{proof}
We have
$$(\bar\partial\theta_i^{(n)},W_i)+b_2^{(n-\frac12)}a_i^{(n-\frac12)}(\hat{\mathbf{V}}^{(n)})\left(\pd{\hat \theta_i^{(n)}}{y},\pd{W_i}{y}\right)-\left(b_1^{(n-\frac12)}\pd{\hat \theta_i^{(n)}}{y},W_i\right)$$
$$=(\bar\partial V_i^{(n)},W_i)+b_2^{(n-\frac12)}a_i^{(n-\frac12)}(\hat{\mathbf{V}}^{(n)})\left(\pd{\hat V_i^{(n)}}{y},\pd{W_i}{y}\right)-\left(b_1^{(n-\frac12)}\pd{\hat V_i^{(n)}}{y},W_i\right)$$
$$-(\bar\partial \tilde v_i^{(n)},W_i)-b_2^{(n-\frac12)}a_i^{(n-\frac12)}(\hat{\mathbf{V}}^{(n)})\left(\pd{\hat{ \tilde{ v}}_i^{(n)}}{y},\pd{W_i}{y}\right)-\left(b_1^{(n-\frac12)}\pd{\hat{ \tilde{ v}}_i^{(n)}}{y},W_i\right)$$
$$=(g_i^{(n)},W_i)-(\bar\partial \tilde v_i^{(n)},W_i)- b_2^{(n-\frac12)}a_i^{(n-\frac12)}(\hat{\mathbf{V}}^{(n)})\left(\pd{\hat{ \tilde{ v}}_i^{(n)}}{y},\pd{W_i}{y}\right)$$
$$+\left(b_1^{(n-\frac12)}\pd{\hat{ \tilde{ v}}_i^{(n)}}{y},W_i\right)$$
$$=\left(\left(\pd{v_i}{t}\right)^{(n-\frac12)},W_i\right)+b_2^{(n-\frac12)}a_i^{(n-\frac12)}(\mathbf{v}^{(n-\frac12)})\left(\pd{ v_i^{(n-\frac12)}}{y},\pd{W_i}{y}\right)$$
$$-\left(b_1^{(n-\frac12)}\pd{v_i^{(n-\frac12)}}{y},W_i\right)-(\bar\partial \tilde v_i^{(n)},W_i)- b_2^{(n-\frac12)}a_i^{(n-\frac12)}(\hat{\mathbf{V}}^{(n)})\left(\pd{\hat{ v}_i^{(n)}}{y},\pd{W_i}{y}\right)$$
$$+\left(b_1^{(n-\frac12)}\pd{\hat{ \tilde{ v}}_i^{(n)}}{y},W_i\right)$$
$$=\left(\left(\pd{v_i}{t}\right)^{(n-\frac12)}-\bar\partial \tilde v_i^{(n)},W_i\right)+ b_2^{(n-\frac12)}a_i^{(n-\frac12)}(\mathbf{v}^{(n-\frac12)})\left(\pd{v_i^{(n-\frac12)}}{y}-\pd{\hat{v}_i^{(n)}}{y},\pd{W_i}{y}\right)$$
$$+b_2^{(n-\frac12)}(a_i^{(n-\frac12)}(\mathbf{v}^{(n-\frac12)})-a_i^{(n-\frac12)}(\hat{\mathbf{V}}^{(n)}))\left(\pd{\hat{ v}_i^{(n)}}{y},\pd{W_i}{y}\right).$$
$$-\left(b_1^{(n-\frac12)}\left(\pd{v_i^{(n-\frac12)}}{y}-\pd{\hat{ \tilde{ v}}_i^{(n)}}{y}\right),W_i\right)$$
Choosing $W_i=\hat\theta_i^{(n)}$, we arrive at
$$\frac12\bar\partial\|\theta_i^{(n)}\|^2+ b_2^{(n-\frac12)}a_i^{(n-\frac12)}(\hat{\mathbf{V}}^{(n)})\left\|\pd{\hat \theta_i^{(n)}}{y}\right\|^2=\left(b_1^{(n-\frac12)}\pd{\hat \theta_i^{(n)}}{y},\hat \theta_i^{(n)}\right)$$
$$ +\left(\left(\pd{v_i}{t}\right)^{(n-\frac12)}-\bar\partial \tilde v_i^{(n)},\hat\theta_i^{(n)}\right) +b_2^{(n-\frac12)}a_i^{(n-\frac12)}(\mathbf{v}^{(n-\frac12)})\left(\pd{v_i^{(n-\frac12)}}{y}-\pd{\hat{v}_i^{(n)}}{y},\pd{\hat\theta_i^{(n)}}{y}\right)$$
$$+b_2^{(n-\frac12)}(a_i^{(n-\frac12)}(\mathbf{v}^{(n-\frac12)})-a_i^{(n-\frac12)}(\hat{\mathbf{V}}^{(n)}))\left(\pd{\hat{ v}_i^{(n)}}{y},\pd{\hat\theta_i^{(n)}}{y}\right).$$
$$ -\left(b_1^{(n-\frac12)}\left(\pd{v_i^{(n-\frac12)}}{y}-\pd{\hat{ \tilde{ v}}_i^{(n)}}{y}\right),\hat\theta_i^{(n)}\right)$$
Integrating by parts, we obtain
$$\left(b_1^{(n-\frac12)}\pd{\hat \theta_i^{(n)}}{y},\hat\theta_i^{(n)}\right)= -\frac{\left(\gamma'\right)^{(n-\frac12)}}{2\gamma^{(n-\frac12)}}\|\hat \theta_i^{(n)}\|^2,$$
$$\left(b_1^{(n-\frac12)}\left(\pd{v_i^{(n-\frac12)}}{y}-\pd{\hat{\tilde{v}}_i^{(n)}}{y}\right),\hat\theta_i^{(n)}\right)= -\frac{\left(\gamma'\right)^{(n-\frac12)}}{2\gamma^{(n-\frac12)}}( v_i^{(n-\frac12)}-\hat{\tilde{v}}_i^{(n)},\hat\theta_i^{(n)})$$
$$-\left(b_1^{(n-\frac12)}(v_i^{(n-\frac12)}-\hat{\tilde{v}}_i^{(n)}),\pd{\hat\theta_i^{(n)}}{y}\right).$$
Applying the H\"older and Cauchy inequalities, we obtain the inequality
$$\frac12\bar\partial\|\theta_i^{(n)}\|^2+ \frac{m_a}{\gamma_0^2}\left\|\pd{\hat \theta_i^{(n)}}{y}\right\|^2\leq \frac{\gamma'_{\max}}{2\gamma_0}\|\hat \theta_i^{(n)}\|^2+ C\left(\left\|\left(\pd{v_i}{t}\right)^{(n-\frac12)}-\bar\partial \tilde v_i^{(n)}\right\|^2\right.$$
$$\left. +\left\|\pd{v_i^{(n-\frac12)}}{y}-\pd{\hat{v}_i^{(n)}}{y}\right\|^2 +\|v_i^{(n-\frac12)}-\hat{\tilde{v}}_i^{(n)}\|^2+|a_i^{(n-\frac12)}(\mathbf{v}^{(n-\frac12)}) -a_i^{(n-\frac12)}(\hat{\mathbf{V}}^{(n)})|^2\right)$$
$$ +\frac{m_a}{\gamma_0^2}\left\|\pd{\hat \theta_i^{(n)}}{y}\right\|^2,$$
where $C=C(M_a,m_a,\gamma_0,\gamma'_{\max},\alpha'_{\max},\|\pd{v_i}{y}\|_{L_{\infty}(0,T,L_2(0,1))})$.
Using interpolation and differentiation theory we can establish the following estimates:
$$\left\|\left(\pd{v_i}{t}\right)^{(n-\frac12)}-\bar\partial \tilde v_i^{(n)}\right\|\leq \left\|\left(\pd{v_i}{t}\right)^{(n-\frac12)}-\bar\partial v_i^{(n)}\right\| +\left\|\bar\partial v_i^{(n)}-\bar\partial \tilde v_i^{(n)}\right\|$$
$$\leq C\delta\left\|\pd{{}^3v_i}{t^3}\right\|_{L_{\infty}(0,T,L_2(0,1))} +Ch^{k+1}\|v_i\|_{L_{\infty}(0,T,H^{k+1}(0,1))};$$
$$\left\|\pd{v_i^{(n-\frac12)}}{y}-\pd{\hat{v}_i^{(n)}}{y}\right\|\leq C\delta^2\left\|\pd{{}^3v_i}{y\partial t^2}\right\|_{L_{\infty}(0,T,L_2(0,1))};$$
$$\|v_i^{(n-\frac12)}-\hat{\tilde{v}}_i^{(n)}\|\leq\|v_i^{(n-\frac12)}-\tilde{v}_i^{(n-\frac12)}\| +\|\tilde{v}_i^{(n-\frac12)}-\hat{\tilde{v}}_i^{(n)}\|$$
$$\leq Ch^{k+1}\|v_i\|_{L_{\infty}(0,T,H^{k+1}(0,1))}+C\delta^2\left\|\pd{{}^2v_i}{ t^2}\right\|_{L_{\infty}(0,T,L_2(0,1))};$$
$$|a_i^{(n-\frac12)}(\mathbf{v}^{(n-\frac12)}) -a_i^{(n-\frac12)}(\hat{\mathbf{V}}^{(n)})|$$
$$\leq \sum_{j=1}^{n_e}C_i\|v_j^{(n-\frac12)}-\hat{V}_j^{(n)}\|\leq C\sum_{j=1}^{n_e}\|v_j^{(n-\frac12)}-\hat{v}_j^{(n)}\| +\|\hat{v}_j^{(n)}-\hat{V}_j^{(n)}\| $$
$$\leq C\sum_{j=1}^{n_e}\delta^2\left\|\pd{{}^2v_j}{ t^2}\right\|_{L_{\infty}(0,T,L_2(0,1))} +\|\hat{\theta}_j^{(n)}\| +\|\hat{\rho}_j^{(n)}\|.$$
So,
$$ \frac12\bar\partial\|\theta_i^{(n)}\|^2\leq \frac{\gamma'_{\max}}{2\gamma_0}\|\hat \theta_i^{(n)}\|^2 +C(\delta^2+h^{k+1})^2 +C_1\sum_{j=1}^{n_e}\|\hat{\theta}_j^{(n)}\| +\|\hat{\rho}_j^{(n)}\|$$
but, in this inequality, $C=C(T,M_a,m_a,\gamma_0,\gamma'_{\max},\alpha'_{\max},\|v_i\|_{L_{\infty}(0,T,H^{k+1}(0,1))},$\\
$ \|\pd{v_i}{y}\|_{L_{\infty}(0,T,L_2(0,1))}, \left\|\pd{{}^2v_i}{ t^2}\right\|_{L_{\infty}(0,T,L_2(0,1))}, \left\|\pd{{}^3v_i}{y\partial t^2}\right\|_{L_{\infty}(0,T,L_2(0,1))},$\\ $\left\|\pd{{}^3v_i}{t^3}\right\|_{L_{\infty}(0,T,L_2(0,1))}).$
Thus
$$\|\theta_i^{(n)}\|^2\leq C\frac{\delta\gamma'_{\max}}{2\gamma_0}\|\theta_i^{(n)}\|^2 +\left(1+\frac{\delta\gamma'_{\max}}{2\gamma_0}\right)\|\theta_i^{(n-1)}\|^2 +C\delta(\delta^2+h^{k+1})^2 $$
$$+C\delta\sum_{j=1}^{n_e}\|\theta_j^{(n)}\|^2 +C\delta\sum_{j=1}^{n_e}\|\rho_j^{(n)}\|^2 +C\delta\sum_{j=1}^{n_e}\|\theta_j^{(n-1)}\|^2 +C\delta\sum_{j=1}^{n_e}\|\rho_j^{(n-1)}\|^2. $$
Summing for $i=1,\dots,n_e$ and recalling the estimate for $\rho$, we obtain
$$\left(1-C\delta-\frac{\delta\gamma'_{\max}}{2\gamma_0}\right)\sum_{j=1}^{n_e}\|\theta_j^{(n)}\|^2$$
$$\leq \left(1+C\delta+\frac{\delta\gamma'_{\max}}{2\gamma_0}\right)\sum_{j=1}^{n_e}\|\theta_j^{(n-1)}\|^2 +C\delta(\delta^2+h^{k+1})^2.$$
If $\delta$ satisfies
\begin{equation}\label{condelta0}
\delta\leq\frac{2\gamma_0}{2C\gamma_0+\gamma'_{\max}}
\end{equation}
then
$$\sum_{j=1}^{n_e}\|\theta_j^{(n)}\|^2\leq C\sum_{j=1}^{n_e}\|\theta_j^{(n-1)}\|^2 +C\delta(\delta^2+h^{k+1})^2.$$
Iterating, we arrive at
$$\sum_{j=1}^{n_e}\|\theta_j^{(n)}\|^2\leq C^n\sum_{j=1}^{n_e}\|\theta_j^{(0)}\|^2 +C(\delta^2+h^{k+1})^2.$$
Since $\|\theta_j^{(0)}\|\leq Ch^{k+1}\|v_{j0}\|_{H^{k+1}(0,1)}$, adding the estimates of $\rho_j$, the result follows.
\end{proof}

For the solution of equation (\ref{eq_CN_im}), in each time step, we propose the following iterative scheme:
$$(M+\delta Aa_i(\frac{\mathbf{V}_{k}^{(n+1)}+\mathbf{V}^{(n)}}{2})-\delta B)V_{i,k+1}^{(n+1)}$$
\begin{equation} \label{pf_CN_im}
=(M-\delta Aa_i(\frac{\mathbf{V}_{k}^{(n+1)}+\mathbf{V}^{(n)}}{2})+\delta B)V_i^{(n)}+\delta G_i,
\end{equation}
$i=1,\dots, n_e,$ $k=1,2,\dots,$ with $\mathbf{V}_{0}^{(n+1)}=\mathbf{V}^{(n)}$ and iterating until $\|\mathbf{V}_{k+1}^{(n+1)}-\mathbf{V}_{k}^{(n+1)}\|\leq tol$.

\begin{theorem}
If $\delta$ is sufficiently small then the iterative scheme (\ref{pf_CN_im}) converges.
\end{theorem}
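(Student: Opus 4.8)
The plan is to mirror the proof of the analogous convergence result for the backward Euler iteration (\ref{pf_E_im}), adapting it to the fact that, in the Crank--Nicolson scheme, the nonlocal coefficient $a_i$ now appears on \emph{both} sides of (\ref{pf_CN_im}). First I would record that the mass matrix $M$ and the stiffness matrix $A$ are positive definite, so that for $\delta$ sufficiently small the operator $M+\delta A\,a_i(\cdot)-\delta B$ is invertible with norm bounded below by a constant $C_1>0$ uniformly in the iterates; this guarantees that each $V_{i,k+1}^{(n+1)}$ is well defined and that (\ref{pf_CN_im}) is a genuine iteration.

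Next, I would subtract the systems (\ref{pf_CN_im}) in two consecutive iterations $k$ and $k+1$. Writing $a_i^{(k)}=a_i\!\left(\tfrac{\mathbf{V}_{k}^{(n+1)}+\mathbf{V}^{(n)}}{2}\right)$ and splitting $a_i^{(k)}V_{i,k+1}^{(n+1)}-a_i^{(k-1)}V_{i,k}^{(n+1)}=a_i^{(k)}(V_{i,k+1}^{(n+1)}-V_{i,k}^{(n+1)})+(a_i^{(k)}-a_i^{(k-1)})V_{i,k}^{(n+1)}$, the terms in which the coefficient is evaluated at $a_i^{(k)}$ versus $a_i^{(k-1)}$ combine from both sides into a single source. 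Setting $E_{i,k+1}=V_{i,k+1}^{(n+1)}-V_{i,k}^{(n+1)}$, the identity I expect is
\begin{equation*}
\left(M+\delta A\,a_i^{(k)}-\delta B\right)E_{i,k+1}=-\delta A\left(a_i^{(k)}-a_i^{(k-1)}\right)\left(V_{i,k}^{(n+1)}+V_i^{(n)}\right).
\end{equation*}
The appearance of the factor $V_{i,k}^{(n+1)}+V_i^{(n)}$, rather than $V_{i,k}^{(n+1)}$ alone as in the Euler case, is exactly the footprint of the two-sided dependence on $a_i$.

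I would then estimate. By the Lipschitz continuity (H5) and the structure of the linear form $l$, one has $\left|a_i^{(k)}-a_i^{(k-1)}\right|\leq C\sum_{j=1}^{n_e}\|E_{j,k}\|$. Taking norms in the identity above, using the lower bound $C_1$ on the left operator together with a uniform bound on $\|V_{i,k}^{(n+1)}+V_i^{(n)}\|$, I obtain $\|E_{i,k+1}\|\leq \tfrac{\delta C}{C_1}\sum_{j}\|E_{j,k}\|$. Summing over $i$ and iterating yields
\begin{equation*}
\sum_{i=1}^{n_e}\|E_{i,k+1}\|\leq\left(\frac{\delta C}{C_1}\right)^{k+1}\sum_{j=1}^{n_e}\|E_{j,0}\|,
\end{equation*}
so that choosing $\delta$ with $\delta C/C_1<1$ turns the iteration into a contraction, and for any $tol>0$ there is a $K$ such that $\|\mathbf{V}_{k+1}^{(n+1)}-\mathbf{V}_{k}^{(n+1)}\|\leq tol$ for all $k>K$.

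The step I expect to be the main obstacle is justifying that the constant $C$ is genuinely independent of $k$: this requires a \emph{uniform} bound on the iterates $\|V_{i,k}^{(n+1)}\|$, which rests on the iterates remaining in a fixed ball $B_\varepsilon$ of the type constructed in the existence proof (together with the stability estimate of Theorem \ref{est_CN_im}). Once that boundedness is secured, the factor $\|V_{i,k}^{(n+1)}+V_i^{(n)}\|$ is controlled uniformly and the remainder of the argument is the routine contraction-mapping reasoning already carried out for the Euler scheme.
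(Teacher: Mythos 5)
Your proposal is correct and follows essentially the same route as the paper's proof: subtract consecutive iterates, use the Lipschitz continuity (H5) to bound $a_i^{(k)}-a_i^{(k-1)}$ by $C\sum_j\|E_{j,k}\|$, invoke the lower bound $C_1$ on the matrix $M+\delta A a_i(\cdot)-\delta B$, and iterate to obtain a geometric contraction when $\delta C/C_1<1$. In fact you are more explicit than the paper on two points it glosses over, namely the exact subtraction identity exhibiting the factor $V_{i,k}^{(n+1)}+V_i^{(n)}$ coming from the two-sided dependence on $a_i$, and the need for a $k$-uniform bound on the iterates to make the constant $C$ independent of $k$.
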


\begin{proof}
The matrices $M$ and $A$ are positive definite, so if $\delta$ is small then system (\ref{pf_CN_im}) has a unique solution for any $k=1,2,\dots$.
Subtracting the systems in two consecutive iterations, say $k$ and $k+1$, taking the norm on both sides and defining $E_{i,k+1}= V_{i,k+1}^{(n+1)}-V_{i,k}^{(n+1)}$, we obtain
$$\|M+\delta Aa_i(\frac{\mathbf{V}_{k}^{(n+1)}+\mathbf{V}^{(n)}}{2})-\delta B\|\|E_{i,k+1}\|\leq \delta C\sum_{j=1}^{n_e}\|E_{j,k}\|.$$
For a small $\delta$, there exists a constant $C_1>0$ such that
$$\|M+\delta Aa_i(\frac{\mathbf{V}_{k}^{(n+1)}+\mathbf{V}^{(n)}}{2})-\delta B\|\geq C_1,\quad i=1,\dots, n_e,$$
and summing up for $j=1,\dots,n_e$, the inequality becomes
$$\sum_{j=1}^{n_e}\|E_{i,k+1}\|\leq \frac{\delta C}{C_1}\sum_{j=1}^{n_e}\|E_{j,k}\|.$$
Iterating, we obtain
$$\sum_{j=1}^{n_e}\|E_{i,k+1}\|\leq \left(\frac{\delta C}{C_1}\right)^{k+1}\sum_{j=1}^{n_e}\|E_{j,0}\|.$$
If we choose the time step $\delta$ such that $\frac{\delta C}{C_1}< 1$ then, for any $tol>0$, there exists a $K$ such that for all $k>K$, $\|\mathbf{V}_{k+1}^{(n+1)}-\mathbf{V}_{k}^{(n+1)}\|\leq tol$.
\end{proof}

\subsection{Linearised Crank-Nicolson method}

In order to avoid the application of an iterative method in each time step, we implement the linearised method suggested in \cite{Tho06}, substituting $\hat{V}_{i}^{(n)}$ with $\overline{V}_{i}^{(n)}=\frac{3}{2}
V_{i}^{(n-1)}-\frac{1}{2}V_{i}^{(n-2)}$ in the diffusion coefficient. So, the
totally discrete problem, in this case, will be to calculate the functions $\mathbf{V}^{(n)}$, $n\geq 2$, belonging to $(S_{h}^{k})^{{n_e}}$, which are
zero on the boundary of $\Omega $ and satisfy
\begin{equation*}
\es{\overline{\partial }V_{i}^{(n)}}{W_{i}}+a_{i}(l(\overline{V}
_{1}^{(n)}),\dots ,l(\overline{V}_{{n_e}}^{(n)}))b_{2}^{(n-1/2)}\es{\frac{
\partial \hat{V}_{i}^{(n)}}{\partial y}}{\frac{\partial W_{i}}{\partial y}}
\end{equation*}
\begin{equation}\label{descn}
-\es{b_{1}^{(n-1/2)}\frac{\partial \hat{V}_{i}^{(n)}}{\partial y}}{
W_{i}}=\es{g_{i}^{(n-1/2)}}{W_{i}},\quad n\geq 2,\quad i=1,\dots
,{n_e}.
\end{equation}
In this way, we have a linear multistep method which requires two initial
estimates $\mathbf{V}^{(0)}$ and $\mathbf{V}^{(1)}$. The estimate $\mathbf{V}
^{(0)}$ is obtained by the initial condition as $V_{i}^{(0)}=I_{h}(v_{i0})$.
In order to calculate $\mathbf{V}^{(1)}$ with the same accuracy, we follow
\cite{Tho06} and use the following predictor-corrector scheme:
\begin{equation*}
\es{\frac{V_{i}^{(1,0)}-V_{i}^{(0)}}{\delta }}{W_{i}} +a_{i}(l(V_{1}^{(0)}),\dots ,l(V_{{n_e}}^{(0)}))b_{2}^{(1/2)}\es{\frac{\partial }{\partial y}\left( \frac{V_{i}^{(1,0)}+V_{i}^{(0)}}{2}
\right)}{ \frac{\partial W_{i}}{\partial y}}
\end{equation*}
\begin{equation}\label{desc10}
-\es{b_{1}^{(1/2)}\frac{\partial }{\partial y}\left( \frac{
V_{i}^{(1,0)}+V_{i}^{(0)}}{2}\right)}{ W_{i}}=\es{g_{i}^{(1/2)}}{W_{i}},\quad i=1,\dots ,{n_e},
\end{equation}
\begin{equation*}
\es{\overline{\partial }V_{i}^{(1)}}{W_{i}}+a_{i}\left( l\left(
\frac{V_{1}^{(1,0)}+V_{1}^{(0)}}{2}\right) ,\dots ,l\left( \frac{
V_{{n_e}}^{(1,0)}+V_{{n_e}}^{(0)}}{2}\right) \right) b_{2}^{(1/2)}\es{\frac{\partial \hat{V}_{i}^{(1)}}{\partial y}}{\frac{\partial W_{i}}{\partial y}}
\end{equation*}
\begin{equation}\label{desc1}
-\es{b_{1}^{(1/2)}\frac{\partial \hat{V}_{i}^{(1)}}{\partial y}}{
W_{i}}=\es{g_{i}^{(1/2)}}{W_{i}},\quad i=1,\dots ,{n_e}.
\end{equation}
Systems (\ref{descn})-(\ref{desc1}) are all linear and for small values of $\delta$ they always have a unique solution. The proof of the stability of the solutions is similar to that of Theorem \ref{est_CN_im}.

\begin{theorem}  \label{conv_CN_ex}
If $\mathbf{v}$ is the solution of equation (\ref{prob1}) and
$\mathbf{V}^{(n)}$ is the solution of (\ref{descn})-(\ref{desc1}), then
\begin{equation*}
\Vert V_{i}^{(n)}(y)-v_{i}(y,t_{n})\Vert \leq C(h^{k+1}+\delta ^{2}),\quad
n=1,\dots ,n_i,\quad i=1,\dots ,{n_e},
\end{equation*}%
where $C$ does not depend on $h$, $k$ or $\delta $, but could depend on $M_a$, $m_a$, $\gamma_0$, $\gamma'_{\max}$, $\alpha'_{\max}$, $\left\|\pd vy\right\|_{L_{\infty}(0,T,L_2(\Omega))}$, $\|v\|_{L_{\infty}(0,T,H^{k+1}(\Omega))}$, $\left\|\pd vt\right\|_{L_{\infty}(0,T,L_2(\Omega))}$, $\left\|\pd{^2 v}{t^2}\right\|_{L_{\infty}(0,T,L_2(\Omega))}$, $\left\|\pd{^3 v}{t^3}\right\|_{L_{\infty}(0,T,L_2(\Omega))}$ and $\left\|\pd{^3 v}{y\partial t^2}\right\|_{L_{\infty}(0,T,L_2(\Omega))}$.
\end{theorem}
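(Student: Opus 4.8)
The plan is to follow the proof of Theorem~\ref{conv_CN_im} almost line for line, since the linearised scheme (\ref{descn})--(\ref{desc1}) differs from the fully implicit Crank--Nicolson scheme (\ref{eq_CN_im}) only in the argument of the nonlocal coefficient, where the average $\hat V_i^{(n)}$ is replaced by the extrapolation $\overline V_i^{(n)}=\frac{3}{2}V_i^{(n-1)}-\frac{1}{2}V_i^{(n-2)}$. First I would split the error as $V_i^{(n)}-v_i^{(n)}=\theta_i^{(n)}+\rho_i^{(n)}$, with $\rho_i^{(n)}=\tilde v_i^{(n)}-v_i^{(n)}$ the Ritz projection error bounded by Lemma~\ref{errproj}, and derive the evolution equation for $\theta_i^{(n)}$ by subtracting (\ref{descn}) from the weak formulation of (\ref{prob1}) at $t_{n-1/2}$. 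Testing with $W_i=\hat\theta_i^{(n)}$ and integrating the advection term by parts reproduces, term by term, the estimates already obtained in Theorem~\ref{conv_CN_im}: the temporal truncation error and the projection error together contribute $O(\delta^2+h^{k+1})$, exactly as before.

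The genuinely new ingredient is the linearised diffusion term, which now carries the factor
\begin{equation*}
b_2^{(n-1/2)}\bigl(a_i^{(n-1/2)}(\mathbf{v}^{(n-1/2)})-a_i^{(n-1/2)}(\overline{\mathbf{V}}^{(n)})\bigr)\es{\pd{\hat v_i^{(n)}}{y}}{\pd{\hat\theta_i^{(n)}}{y}}.
\end{equation*}
By the Lipschitz hypothesis (H5) I would bound the coefficient difference by $C\sum_j\|v_j^{(n-1/2)}-\overline V_j^{(n)}\|$ and insert the intermediate extrapolant $\overline v_j^{(n)}=\frac{3}{2}v_j^{(n-1)}-\frac{1}{2}v_j^{(n-2)}$, writing $v_j^{(n-1/2)}-\overline V_j^{(n)}=(v_j^{(n-1/2)}-\overline v_j^{(n)})+(\overline v_j^{(n)}-\overline V_j^{(n)})$. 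A Taylor expansion about $t_{n-1/2}$ shows that the weights $\frac{3}{2},-\frac{1}{2}$ reproduce the value and first derivative there, leaving $\|v_j^{(n-1/2)}-\overline v_j^{(n)}\|\leq C\delta^2\left\|\pd{{}^2 v_j}{t^2}\right\|_{L_\infty(0,T;L_2)}$, so the overall second order is preserved; the remaining part equals $-\frac{3}{2}e_j^{(n-1)}+\frac{1}{2}e_j^{(n-2)}$ and is controlled by $\|\theta_j^{(n-1)}\|$, $\|\theta_j^{(n-2)}\|$ and the $O(h^{k+1})$ projection errors. After Young's inequality absorbs the gradient factor into the coercive diffusion term on the left, the nonlocal term feeds $\theta$ at the two preceding levels into the right-hand side.

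Collecting all contributions and summing over $i$, I expect an inequality of the form
\begin{equation*}
(1-C\delta)\sum_i\|\theta_i^{(n)}\|^2\leq (1+C\delta)\sum_i\|\theta_i^{(n-1)}\|^2+C\delta\sum_i\|\theta_i^{(n-2)}\|^2+C\delta(\delta^2+h^{k+1})^2,
\end{equation*}
which, unlike the one-step recurrence of Theorem~\ref{conv_CN_im}, couples three consecutive levels. This is the point I expect to be the main obstacle. To close it I would set $\Theta^{(n)}=\sum_i\|\theta_i^{(n)}\|^2$ and pass to $M^{(n)}=\max_{0\leq m\leq n}\Theta^{(m)}$; for $\delta$ small enough that $1-C\delta>0$, the estimate gives $M^{(n)}\leq (1+C\delta)M^{(n-1)}+C\delta(\delta^2+h^{k+1})^2$, to which the discrete Gronwall lemma applies and yields $M^{(n)}\leq C\bigl(M^{(1)}+(\delta^2+h^{k+1})^2\bigr)$.

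It remains to control the two starting values. From $V_i^{(0)}=I_h v_{i0}$ and Lemma~\ref{errint} one obtains $\|\theta_i^{(0)}\|\leq Ch^{k+1}$, while the predictor--corrector step (\ref{desc10})--(\ref{desc1}) for $\mathbf{V}^{(1)}$ is analysed exactly as the single-step Crank--Nicolson start, giving $\|\theta_i^{(1)}\|\leq C(\delta^2+h^{k+1})$, so that $M^{(1)}\leq C(\delta^2+h^{k+1})^2$. Substituting back produces $\|\theta_i^{(n)}\|\leq C(\delta^2+h^{k+1})$, and adding the Ritz estimate $\|\rho_i^{(n)}\|\leq Ch^{k+1}$ delivers the claimed bound.
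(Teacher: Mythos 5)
For the time levels $n\geq 2$ your plan coincides with the paper's proof: same splitting $V_i^{(n)}-v_i^{(n)}=\theta_i^{(n)}+\rho_i^{(n)}$, same energy argument with $W_i=\hat\theta_i^{(n)}$, same insertion of the extrapolant $\overline v_j^{(n)}=\frac32 v_j^{(n-1)}-\frac12 v_j^{(n-2)}$ with the Taylor bound $\|v_j^{(n-1/2)}-\overline v_j^{(n)}\|\leq C\delta^2\left\|\pd{{}^2v_j}{t^2}\right\|_{L_\infty(0,T;L_2)}$, and the same three-level recurrence. Your closure of that recurrence via a running maximum and discrete Gronwall is a minor (and clean) variant of the paper's direct iteration; nothing is lost there.

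The genuine gap is in the start-up estimate for $\mathbf{V}^{(1)}$, which you dispatch in one sentence but which is in fact the bulk of the paper's proof. The predictor--corrector pair (\ref{desc10})--(\ref{desc1}) is \emph{not} ``analysed exactly as the single-step Crank--Nicolson start'': unlike the implicit scheme (\ref{eq_CN_im}), the predictor (\ref{desc10}) freezes the nonlocal coefficient at $\mathbf{V}^{(0)}$, so the coefficient perturbation it commits is
\begin{equation*}
\sum_j\|v_j^{(1/2)}-V_j^{(0)}\|\leq C(\delta+h^{k+1}),
\end{equation*}
which is only first order in $\delta$. Running the energy argument on (\ref{desc10}) therefore yields
\begin{equation*}
\|\theta_i^{(1,0)}\|^2\leq\|\theta_i^{(0)}\|^2+C\delta(\delta+h^{k+1})^2\leq C(h^{2(k+1)}+\delta^{3}),
\end{equation*}
i.e.\ $\|\theta_i^{(1,0)}\|\leq C(h^{k+1}+\delta^{3/2})$ --- not the second-order bound. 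The point of the paper's two-stage argument is that this suboptimal predictor error enters the corrector (\ref{desc1}) \emph{only through the Lipschitz coefficient} $a_i$, where Young's inequality squares it and the time step contributes one more factor of $\delta$: one gets $\delta\,(\delta^{3/2})^2=\delta^4$, hence $\|\theta_i^{(1)}\|^2\leq C(h^{2(k+1)}+\delta^4)$ and finally $\|\theta_i^{(1)}\|\leq C(h^{k+1}+\delta^2)$. Without this bootstrap, the bound $M^{(1)}\leq C(\delta^2+h^{k+1})^2$ on which your Gronwall step rests is unjustified, and a literal ``same as Crank--Nicolson'' analysis of the first step would fail, since the scheme actually solved at $n=1$ is linearised, not the implicit one. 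You should make this two-pass estimate (predictor gives $\delta^{3/2}$, corrector restores $\delta^2$) explicit; the rest of your argument then goes through.
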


\begin{proof}
First, we will determine the estimate for $n=1$.
Let $\theta_i^{(1,0)}=V_i^{(1,0)}-\tilde v_i^{(1)}$, $\hat
\theta_i^{(1,0)}=\frac{\theta_i^{(1,0)}+\theta_i^{(0)}}{2}$ and
$\overline{\partial}\theta_i^{(1,0)}=\frac{\theta_i^{(1,0)}-\theta_i^{(0)}}{\delta}$. Arguing in the same way as in Theorem \ref{conv_CN_im} and setting  $W_i=\hat\theta_i^{(1,0)}$ in (\ref{desc10}), we have\\

$\displaystyle
\frac12\overline{\partial}\|\theta_i^{(1,0)}\|^2+\frac{m_a}{\gamma_0^2}\left\|\frac{\partial\hat\theta_i^{(1,0)}}{\partial y}\right\|^2\leq$
$$\leq C \left( \left\|\left(\pd{v_i}{t}\right)^{(1/2)}-\overline\partial\tilde v_i^{(1)}\right\|+\left\|\pd{v_i^{(1/2)}}{y}-\pd{\hat{v}_i^{(1)}}{y}\right\|+\sum_{j=1}^{{n_e}}\|v_{j}^{(1/2)}-V_{j}^{(0)}\|\right.$$
$$+\|\hat{\tilde v}_i^{(1)}-v_i^{(1/2)}\|\Bigg)\left\|\frac{\partial\hat\theta_i^{(1,0)}}{\partial y}\right\|.$$
Using Cauchy's inequality, it follows that
\begin{eqnarray*}
\overline{\partial}\|\theta_i^{(1,0)}\|^2&\leq& C \left( \left\|\left(\pd{v_i}{t}\right)^{(1/2)}-\overline\partial\tilde v_i^{(1)}\right\|+\left\|\pd{v_i^{(1/2)}}{y}-\pd{\hat{v}_i^{(1)}}{y}\right\|\right.\\ &&+\sum_{j=1}^{{n_e}}\|v_{j}^{(1/2)}-V_{j}^{(0)}\|+\|\hat{\tilde v}_i^{(1)}-v_i^{(1/2)}\|\Bigg),
\end{eqnarray*}
with $C=C(M_a,m_a,\gamma_0,\gamma'_{\max},\alpha'_{\max},\|\pd{v_i}{y}\|_{L_{\infty}(0,T,L_2(0,1))})$.
The following estimates are true for every $i\in\{1,\dots,{n_e}\}$,
\begin{eqnarray*}
\left\|\left(\pd{v_i}{t}\right)^{(1/2)}-\overline\partial\tilde v_i^{(1)}\right\|&\leq&\left\|\left(\pd{v_i}{t}\right)^{(1/2)}-\overline\partial v_i^{(1)}\right\|+\|\overline\partial v_i^{(1)}-\overline\partial\tilde v_i^{(1)}\|\\
&\leq &C\delta^2+C h^{k+1},\\
\left\|\pd{v_i^{(1/2)}}{y}-\pd{\hat{v}_i^{(1)}}{y}\right\|&\leq& C\delta\int_{t_0}^{t_1}\left\|\frac{\partial^3 v_{i}}{\partial y\partial t^2}\right\|\ dt\leq C\delta^2,\\
\|v_{i}^{(1/2)}-V_{i}^{(0)}\|&\leq&\|v_{i}^{(1/2)}-v_{i}^{(0)}\|+\|v_{i}^{(0)}-V_{i}^{(0)}\|\leq C\delta+C h^{k+1},\\
\end{eqnarray*}
and
$$\|\hat{\tilde v}_i^{(1)}-v_i^{(1/2)}\|\leq  \|\hat{\tilde v}_i^{(1)}-\hat{\tilde v}_i^{(1/2)}\|+\|\hat{\tilde v}_i^{(1/2)}-v_i^{(1/2)}\|\leq C\delta^2+C h^{k+1}.$$
Hence
$$\overline{\partial}\|\theta_i^{(1,0)}\|^2\leq C (h^{k+1}+\delta)^2,$$
and we have the estimate
$$\|\theta_i^{(1,0)}\|^2\leq\|\theta_i^{(0)}\|^2+ C\delta (h^{k+1}+\delta)^2\leq C(h^{2(k+1)}+\delta^3),\quad i=1,\dots,{n_e},$$
where $C=C(T,M_a,m_a,\gamma_0,\gamma'_{\max},\alpha'_{\max},\|v_i\|_{L_{\infty}(0,T,H^{k+1}(0,1))}, \|\pd{v_i}{y}\|_{L_{\infty}(0,T,L_2(0,1))},$\\  $\left\|\pd{{}^2v_i}{ t^2}\right\|_{L_{\infty}(0,T,L_2(0,1))}, \left\|\pd{{}^3v_i}{y\partial t^2}\right\|_{L_{\infty}(0,T,L_2(0,1))}, \left\|\pd{{}^3v_i}{t^3}\right\|_{L_{\infty}(0,T,L_2(0,1))}).$\\

\noindent Repeating this process for equation (\ref{desc1}), we arrive at\\

$\displaystyle
\frac12\overline{\partial}\|\theta_i^{(1)}\|^2+\frac{m_a}{\gamma_0^2}\left\|\frac{\partial\hat\theta_i^{(1)}}{\partial y}\right\|^2$
\begin{eqnarray*}
&\leq& C \left( \left\|\left(\pd{v_i}{t}\right)^{(1/2)}-\overline\partial\tilde v_i^{(1)}\right\|+\left\|\pd{v_i^{(1/2)}}{y}-\pd{\hat{v}_i^{(1)}}{y}\right\|\right.\\
&&\left.+\sum_{j=1}^{{n_e}}\left\|v_{j}^{(1/2)}-\frac{V_{j}^{(1,0)}-V_{j}^{(0)}}{2}\right\|+\|\hat{\tilde v}_i^{(1)}-v_i^{(1/2)}\|\right)\left\|\frac{\partial\hat\theta_i^{(1)}}{\partial y}\right\|.\\
\end{eqnarray*}
In this case, we use the estimate
\begin{eqnarray*}
\left\|v_{i}^{(1/2)}-\frac{V_{i}^{(1,0)}-V_{i}^{(0)}}{2}\right\|&\leq& \|v_{i}^{(1/2)}-\hat{\tilde v}_i^{(1)}\|+\|\hat{\tilde v}_i^{(1)}-\frac{V_{i}^{(1,0)}-V_{i}^{(0)}}{2}\|\\
&\leq& \|v_{i}^{(1/2)}-\hat{\tilde v}_i^{(1)}\|+\frac12\|\theta_i^{(1,0)}\|+\frac12\|\theta_i^{(0)}\|\\
&\leq& C(h^{k+1}+\delta^{2})+C h^{k+1}+C(h^{k+1}+\delta^{\frac32})\\
&\leq& C(h^{k+1}+\delta^{\frac32}),
\end{eqnarray*}
and then, by Cauchy's inequality, we conclude that
$$\overline{\partial}\|\theta_i^{(1)}\|^2\leq C (h^{2(k+1)}+\delta^3),$$
whence
$$\|\theta_i^{(1)}\|^2\leq\|\theta_i^{(0)}\|^2+ C\delta (h^{2(k+1)}+\delta^3)\leq C(h^{2(k+1)}+\delta^4).$$
To conclude the proof, we obtain the result for $n\geq2$, applying the same process to equation (\ref{descn}). In this way, we obtain\\

$\displaystyle
\frac12\overline{\partial}\|\theta_i^{(n)}\|^2+\frac{m_a}{\gamma_0^2}\left\|\frac{\partial\hat\theta_i^{(n)}}{\partial y}\right\|^2$
\begin{eqnarray*}
&\leq& C \left( \left\|\left(\pd{v_i}{t}\right)^{(n-1/2)}-\overline\partial\tilde v_i^{(n)}\right\|+\left\|\pd{v_i^{(n-1/2)}}{y}-\pd{\hat{v}_i^{(n)}}{y}\right\|\right.\\
&&\left.+\sum_{j=1}^{{n_e}}\left\|v_{j}^{(n-1/2)}-\bar{V}_j^{(n)}\right\|+\|\hat{\tilde v}_i^{(n)}-v_i^{(n-1/2)}\|\right)\left\|\frac{\partial\hat\theta_i^{(n)}}{\partial y}\right\|.
\end{eqnarray*}
Now, we need the estimate
\begin{eqnarray*}
\left\|v_{i}^{(n-1/2)}-\bar{V}_i^{(n)}\right\|&\leq& \|v_{i}^{(n-1/2)}-\bar{v}_i^{(n)}\|+\|\bar{v}_i^{(n)}-\bar{V}_i^{(n)}\|\\
&\leq& \|v_{i}^{(n-1/2)}-\bar{v}_i^{(n)}\|+\|\overline\rho_i^{(n)}\|+\|\overline\theta_i^{(n)}\|\\
&\leq& C\delta^2+C h^{k+1}+C(\|\theta_{n-1}\|+\|\theta_{n-2}\|)
\end{eqnarray*}
to prove that
$$\overline{\partial}\|\theta_i^{(n)}\|^2\leq C\sum_{j=1}^{{n_e}}\|\theta_j^{(n-1)}\|^2+C\sum_{j=1}^{{n_e}}\|\theta_j^{(n-2)}\|^2+C(h^{(k+1)}+\delta^2)^2,\quad i=1,\dots,{n_e}.$$
Summing up for all $i$, it follows that
$$\overline{\partial}\sum_{i=1}^{{n_e}}\|\theta_i^{(n)}\|^2\leq C\sum_{j=1}^{{n_e}}\|\theta_j^{(n-1)}\|^2+C\sum_{j=1}^{{n_e}}\|\theta_j^{(n-2)}\|^2+C(h^{(k+1)}+\delta^2)^2.$$
Iterating, we obtain
$$\sum_{i=1}^{{n_e}}\|\theta_i^{(n)}\|^2\leq(1+C\delta)\sum_{i=1}^{{n_e}}\|\theta_i^{(n-1)}\|^2+ C\delta\sum_{i=1}^{{n_e}}\|\theta_i^{(n-2)}\|^2 +C\delta (h^{k+1}+\delta^2)^2$$
$$\leq C\sum_{i=1}^{{n_e}}\|\theta_i^{(1)}\|^2+ C\sum_{i=1}^{{n_e}}\delta\|\theta_i^{(0)}\|^2+ C\delta (h^{k+1}+\delta^2)^2$$
and, recalling the estimates for $\|\theta_i^{(0)}\|$, $\|\theta_i^{(1)}\|$ and
$\|\rho_i^{(n)}\|$, the proof is complete.
\end{proof}

The conditions on $h$, $\delta$, $\gamma'_{\max}$ and $\gamma_0$ are the same as those in Theorem \ref{conv_CN_im}.


\section{Example}

The final step is to implement this method using a programming language. To
perform this task, we choose the Matlab environment. In this section, we present one example to illustrate the applicability and robustness of the methods, comparing the results with the theoretical
results proved and with the results obtained with the method presented in \cite{RACF14}.
We simulate a problem with a known exact solution, which
will permit us to calculate the error and confirm numerically the
theoretical convergence rates. Let us consider Problem (\ref{prob0}) with
two equations in $Q_{t}$ and $T=1$. The diffusion
coefficients are
\begin{equation*}
a_1(r,s)=2-\frac{1}{1+r^2}+\frac{1}{1+s^2},\quad a_2(r,s)=3+\frac{2}{1+r^2}-%
\frac{1}{1+s^2},
\end{equation*}
the movement of the boundaries is given by the functions
\begin{equation*}
\alpha(t)=-\frac{t}{1+t},\quad \beta(t)=1+\frac{2t}{1+t},
\end{equation*}
the functions $f_1(x,t)$, $f_2(x,t)$, $u_{10}(x,t)$ and $u_{20}(x,t)$ are
chosen such that
\begin{equation*}
u_1(x,t)=\frac{1}{t+1}\left(\frac{611}{70}z-\frac{10513}{210}z^2+\frac{646}{7%
}z^3-\frac{1070}{21}z^4\right)
\end{equation*}
and
\begin{equation*}
u_2(x,t)=e^{-t}\left(\frac{2047}{140}z-\frac{27701}{420}z^2+\frac{691}{7}z^3-%
\frac{995}{21}z^4\right)
\end{equation*}
with exact solutions
\begin{equation*}
z=\frac{(2t+1)(x+tx+t)}{5t^2+5t+1}.
\end{equation*}

\begin{figure}[!htb]
\center
\includegraphics[width=0.9\textwidth]{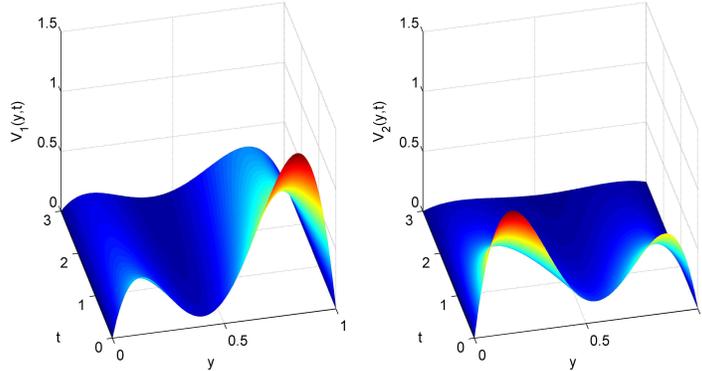}
\caption{Evolution in time of the approximate solution in the fixed
boundary problem for $v_{1}$ (left) and $v_{2}$ ( right).}
\label{ex1_sol_f}
\end{figure}
The picture on the left in Figure \ref{ex1_sol_f}
illustrates the evolution in time of the solution obtained for $v_1$ in the
fixed boundary problem, and the picture on the right illustrates the
evolution in time of the solution obtained for $v_2$. This solution was
calculated with the linearised Crank-Nicolson method with approximations of degree two and $h=\delta=10^{-2}$.
\begin{figure}[!htb]
\center
\includegraphics[width=0.9\textwidth]{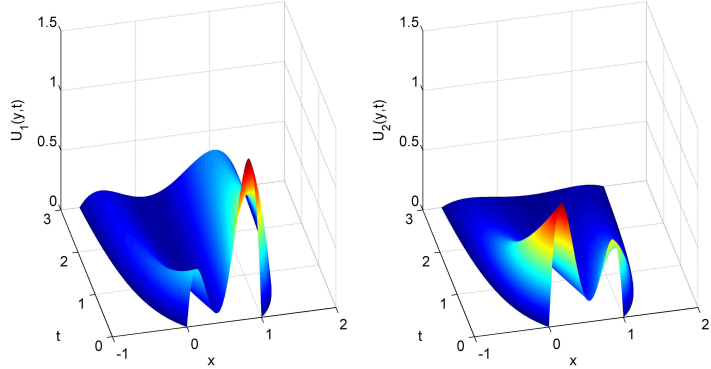}
\caption{Evolution in time of the approximate solution in the moving
boundary problem for $u_{1}$ (left) and $u_{2}$ ( right).}
\label{ex1_sol_m}
\end{figure}
The pictures in Figure \ref{ex1_sol_m} represent the solutions obtained in
the moving boundary domain, after applying the inverse transformation $\tau
^{-1}(y,t)$. If, for example, $u$ and $v$ represent the density of two populations
of bacteria, we observe that, initially, each population is concentrated mainly in two
regions and, as time increases, the two populations decrease and spread out
in the domain, as expected.

\begin{figure}[!htb]
\centering
\includegraphics[width=0.45\textwidth]{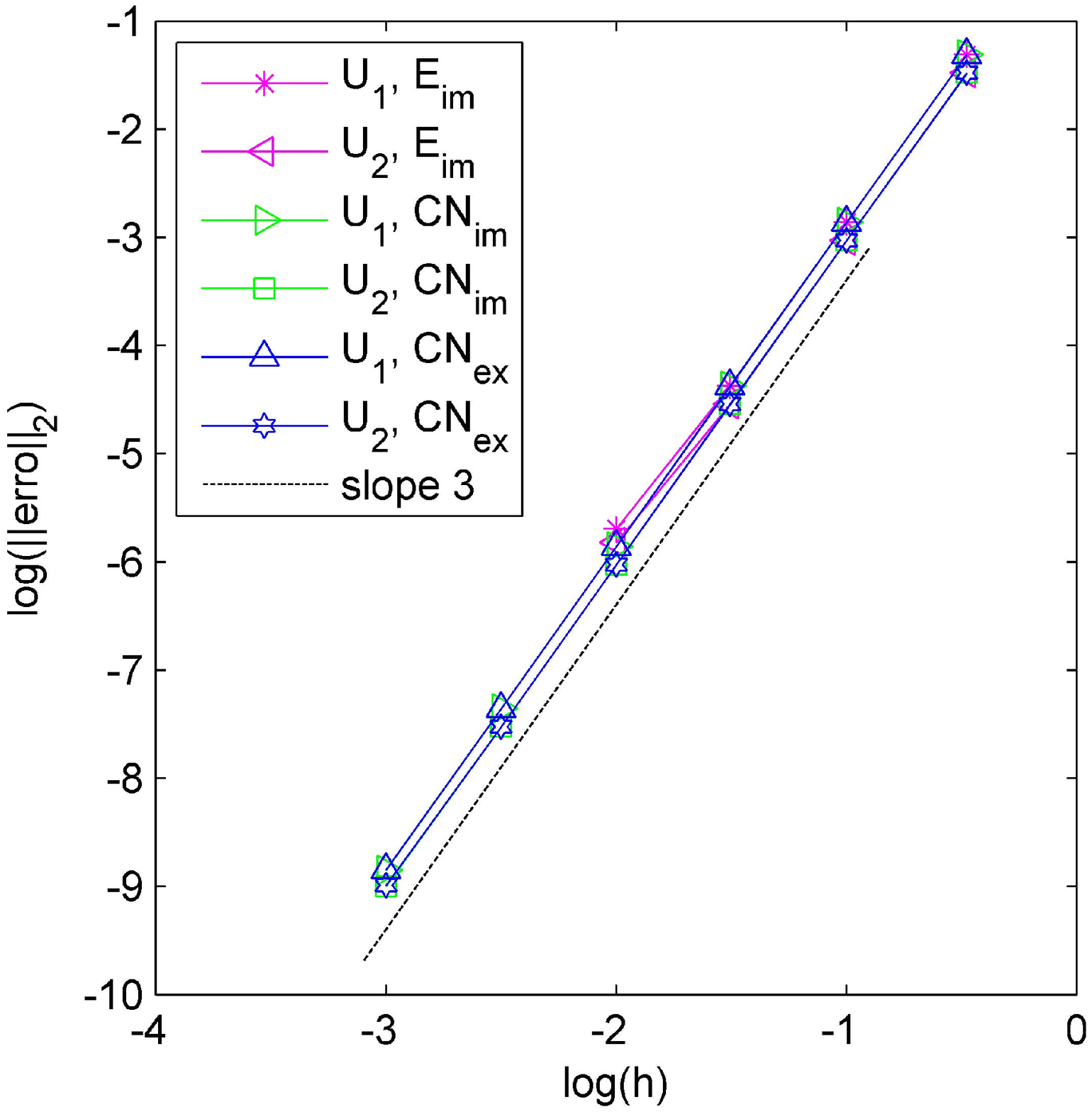} \hfill
\includegraphics[width=0.45\textwidth]{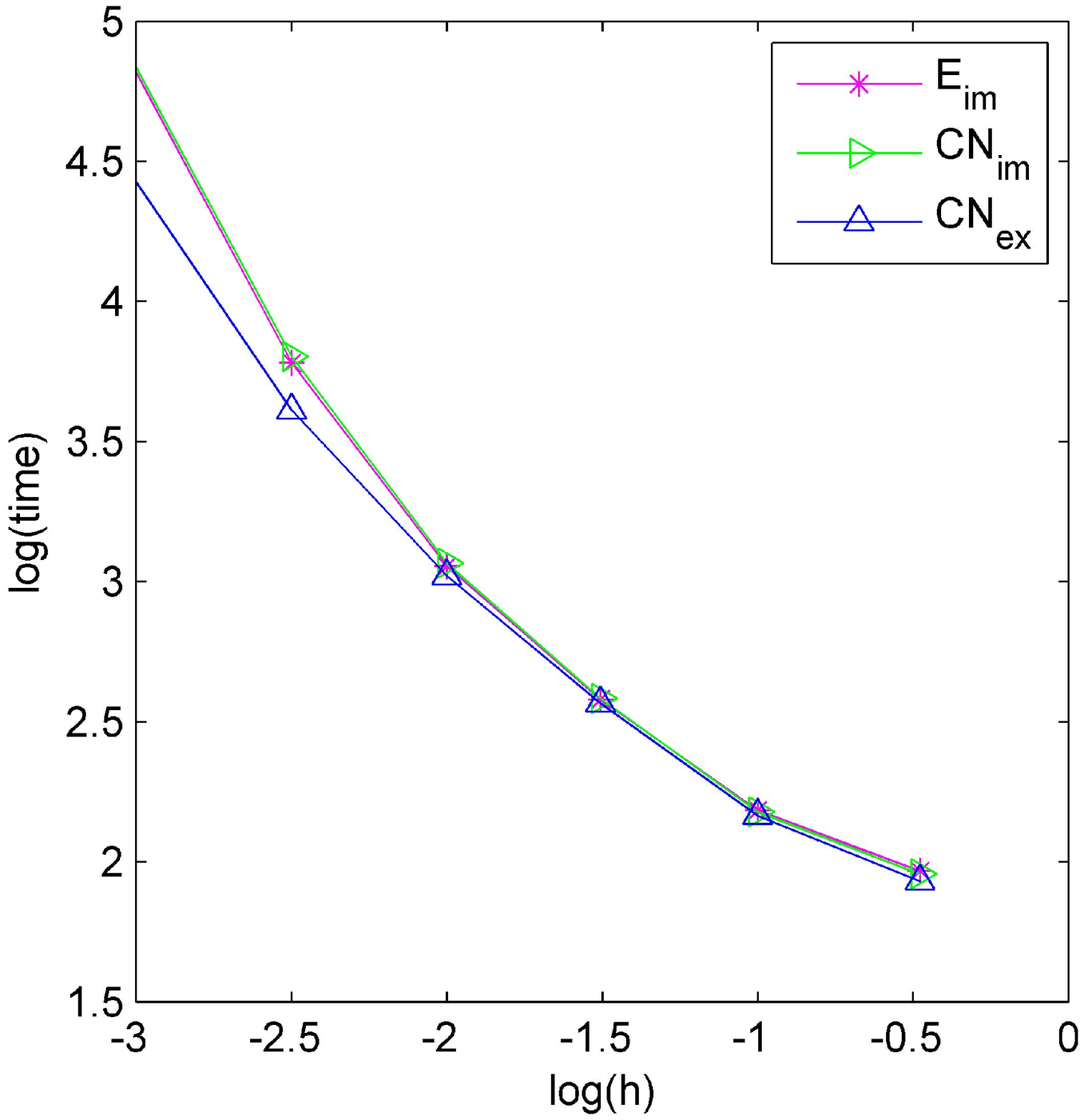}
\caption{Study of the convergence for $h$ with approximations of degree 2.}
\label{ex1_O_h}
\end{figure}

\begin{figure}[!htb]
\centering
\includegraphics[width=0.45\textwidth]{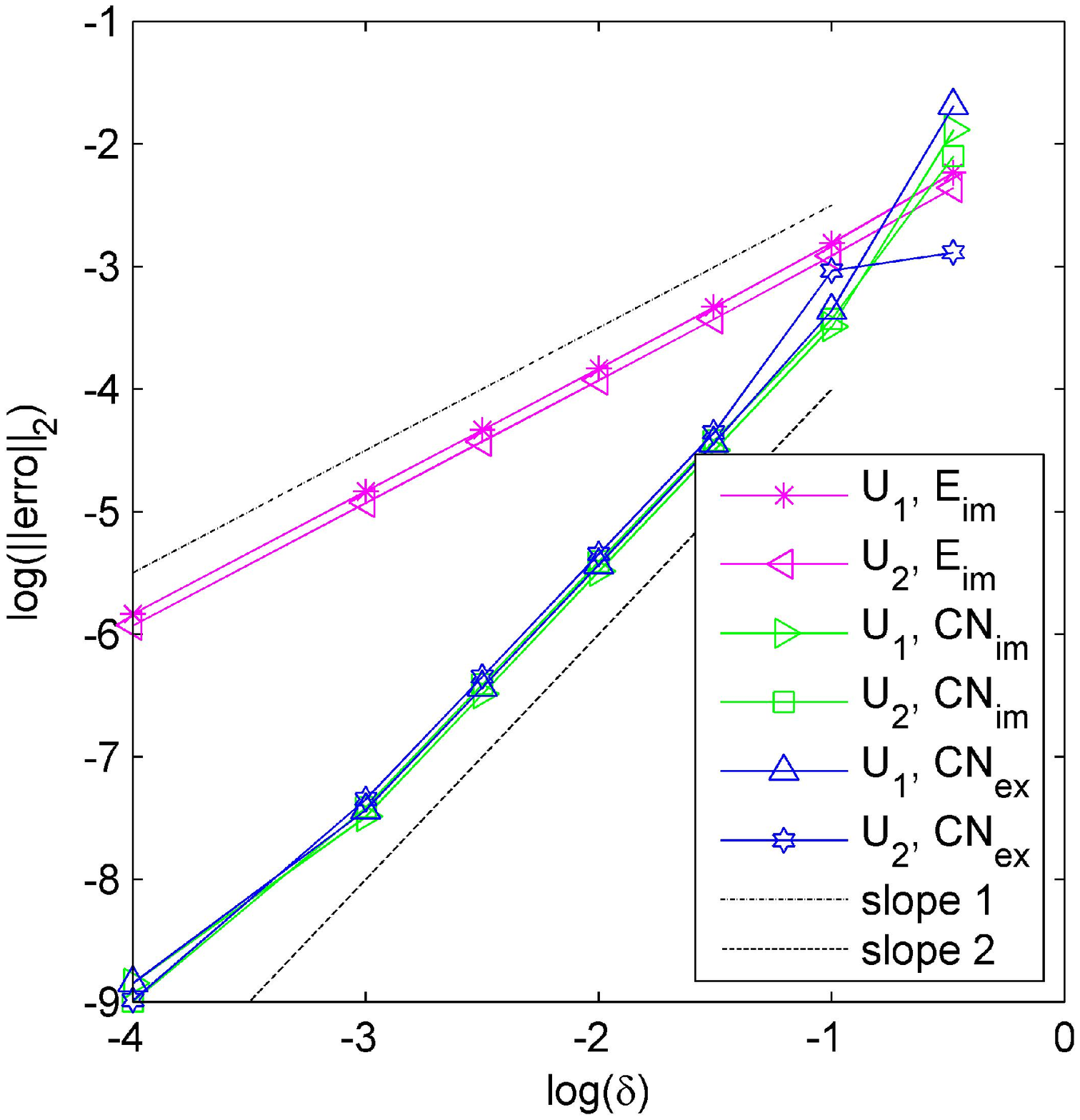} \hfill
\includegraphics[width=0.45\textwidth]{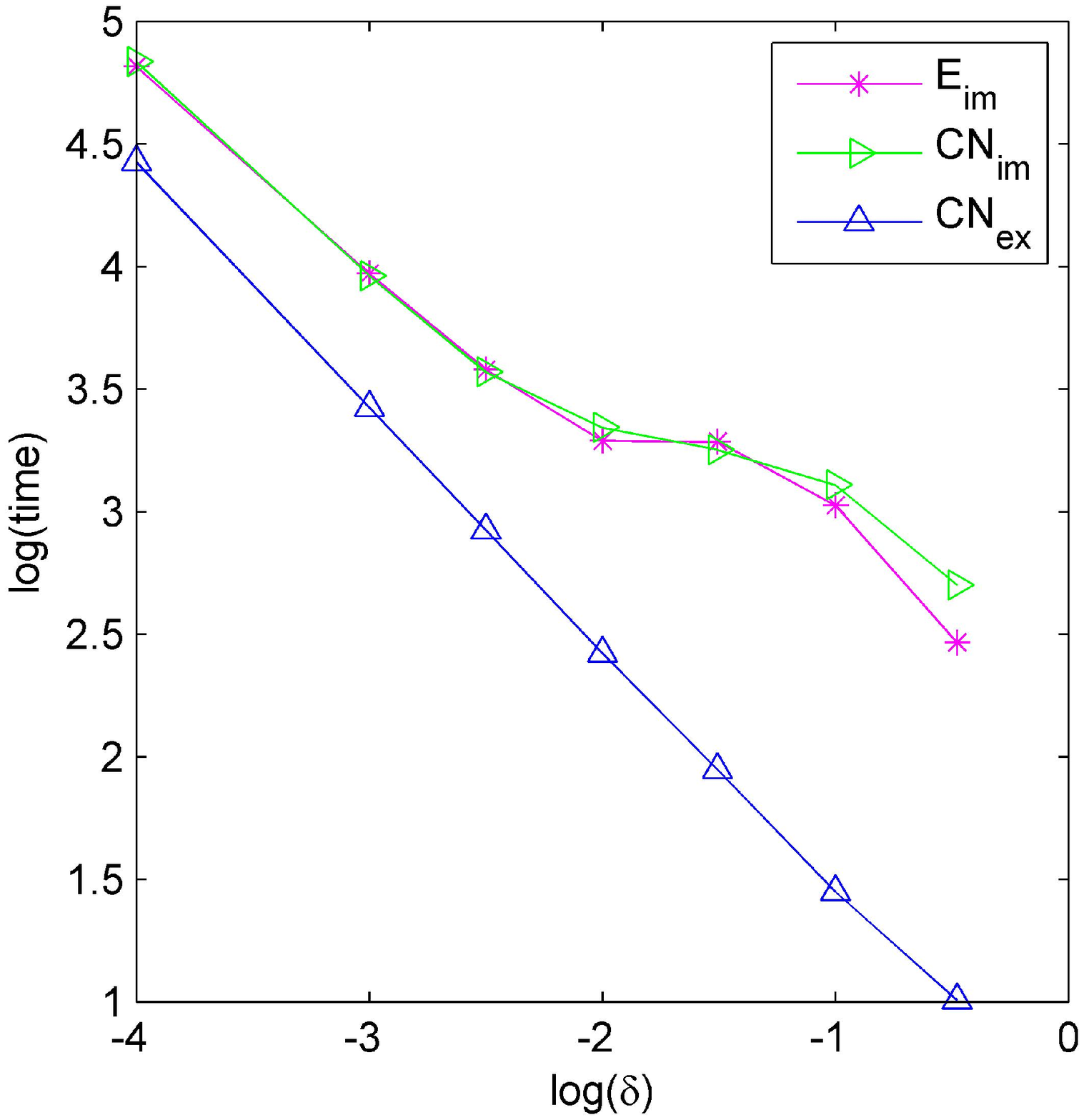}
\caption{Study of the convergence for  $\protect\delta $.}
\label{ex1_O_d}
\end{figure}

In order to analyse the convergence rates, this problem was simulated with
different combinations of $k$, $h$ and $\delta $ for each method and the error was calculated at $t=T$ and using the $L_{2}(\alpha (T),\beta (T))$-norm in the space variable. In the picture on the left in Figure \ref{ex1_O_h},  the logarithms of the errors versus the logarithm of $h$ for the simulations with $\delta
=10^{-4}$ and approximations of degree $2$, are represented. As expected, the order of convergence is approximately 3, as was proved in Theorem \ref{conv_h}. In the picture on the right we plotted the logarithms of the CPU time versus the logarithm of $h$. For large $h$, the three methods took approximately the same time, but as $h$ decreases the implicit methods take more time than the explicit one.\\
The logarithms of the errors versus the logarithm of $\delta $ for
the simulations with $h=10^{-3}$ and approximations of degree $2$, are
represented in the picture on the left in Figure \ref{ex1_O_d}. The results are in
accordance with the orders of convergence proved in Theorems \ref{conv_E_im}, \ref{conv_CN_im} and \ref{conv_CN_ex}.\\
The logarithms of the CPU time versus the logarithm of $\delta$ are plotted in the picture on the right. The implicit methods take much more time than the explicit one for big values of $\delta$, because the fixed point method requires a considerable number of iterations to obtain the predefined tolerance.\\

\begin{table}[!htb]
\begin{center}
\begin{tabular}{l|c|c|c|c|}
& \multicolumn{4}{|c|}{$\displaystyle\max_{j=1,\dots,n_p}%
\{|u_1(P_j,t_i)-U_1^{(i)}(P_j)|\}$} \\ \hline
$t_i$ & MFEM\cite{RACF14} & $E_{im}$& $CN_{im}$& $CN_{ex}$ \\ \hline
$0.001$ & 7.30e-08 & 1.26e-07 & 5.17e-10  & 2.65e-10  \\
$0.005$ & 8.95e-08 & 5.25e-07 & 1.56e-09  & 1.03e-09  \\
$0.01$  & 2.79e-08 & 8.65e-07 & 2.14e-09  & 1.46e-09  \\
$0.02$  & 1.33e-08 & 1.33e-06 & 2.59e-09  & 1.84e-09  \\
$0.05$  & 7.27e-08 & 2.09e-06 & 2.73e-09  & 2.05e-09  \\
$0.5$   & 1.90e-08 & 2.49e-06 & 1.04e-09  & 1.06e-09  \\
$1$     & 2.12e-08 & 1.51e-06 & 4.43e-10  & 5.06e-10  \\ \hline
\end{tabular}\\
\begin{tabular}{l|c|c|c|c|}
& \multicolumn{4}{|c|}{$\displaystyle%
\max_{j=1,\dots,n_p}\{|u_2(P_j,t_i)-U_2^{(i)}(P_j)|\}$} \\ \hline
$t_i$ & MFEM\cite{RACF14} & $E_{im}$& $CN_{im}$& $CN_{ex}$ \\ \hline
$0.001$  & 4.25e-08 &  4.7e-08   & 2.26e-10  & 6.36e-10 \\
$0.005$  & 5.20e-08 &  1.94e-07  & 5.78e-10  & 1.45e-09 \\
$0.01$   & 1.62e-08 &  3.25e-07  & 7.58e-10  & 1.80e-09 \\
$0.02$   & 7.74e-09 &  5.20e-07  & 9.01e-10  & 2.02e-09 \\
$0.05$   & 4.22e-08 &  8.84e-07  & 1.05e-09  & 2.16e-09 \\
$0.5$    & 1.07e-08 &  1.45e-06  & 8.00e-10  & 1.09e-09 \\
$1$      & 9.33e-09 &  1.13e-06  & 4.84e-10  & 5.59e-10 \\ \hline
\end{tabular}%
\end{center}
\caption{Comparison of the present method with the moving finite element
method in \protect\cite{RACF14}}
\label{tabela1}
\end{table}

In Table \ref{tabela1}, we compare the error
of the present method with the error of the moving finite element method
presented in \cite{RACF14}. All the simulations were done with approximations
of degree five and four finite elements. We used $\delta =10^{-4}$ for the
present methods and $10^{-10}$ for the integrator's error tolerance in the
moving finite element method.


\section{Conclusions}

We established sufficient conditions on the data to obtain optimal convergence rates for some finite element solutions with piecewise polynomial of
arbitrary degree basis functions in space when applied to a system of
nonlocal parabolic equations. Some numerical experiments were presented,
considering different time integrators. The
numerical results are in accordance with the theoretical results and are
similar in accuracy to results obtained by other method.

\section*{Acknowledgements}

This work was partially supported by the research projects:\newline
UID/MAT/00212/2013, financed by FEDER
through the - Programa Operacional Factores de Competitividade, FCT -
Fundação para a Ciência e a Tecnologia and CAPES - Brazil, Grant BEX 2478-12-9.

\end{document}